\documentclass[11pt,reqno]{amsart}
\usepackage{geometry}                
\usepackage{graphicx}
\usepackage{amssymb}
\usepackage{epstopdf}
\usepackage{amsmath}
\usepackage{bm}
\usepackage{color}
\usepackage{hyperref}
\usepackage{pdfsync}
\usepackage{float}
\usepackage{verbatim}
\usepackage{tikz}
\usetikzlibrary{arrows,decorations.pathmorphing,backgrounds,positioning,fit,petri}
\usetikzlibrary{cd,patterns,matrix}
\usetikzlibrary{decorations.markings}
\usetikzlibrary {arrows.meta}
\usetikzlibrary{bending}

\usepackage[all,cmtip]{xy}
\xyoption{matrix}
\xyoption{arrow}
\usepackage[table]{colortbl}

\tikzset{help lines/.style={step=#1cm,very thin, color=gray},
help lines/.default=.5} 
\tikzset{thick grid/.style={step=#1cm,thick, color=gray},
thick grid/.default=1} 

\usepackage{tcolorbox}  
\tcbset{colframe=black,colback=white, width=(\linewidth-2mm), before=,after=\hfill,}
\usepackage{graphicx}
\usepackage{tikz}
\usetikzlibrary{mindmap,shapes,snakes,matrix}

\tikzstyle{ann}=[fill=white, inner sep=1pt, font=\footnotesize{#1}]
\tikzstyle{annfar}=[inner sep=2pt, font=\footnotesize{#1}]
\tikzstyle{annfarer}=[inner sep=3pt, font=\footnotesize{#1}]
\tikzstyle{annrot}=[fill=white, text=blue!75!black, inner sep=1pt, font=\footnotesize{#1}]
\tikzstyle{wall}=[thick]
\tikzstyle{nullwall}=[thick, dotted]

\newtheorem{thm}{Theorem}[section]
\newtheorem{lem}[thm]{Lemma}
\newtheorem{cor}[thm]{Corollary}
\newtheorem{prop}[thm]{Proposition}
\newtheorem{defn}[thm]{Definition}
\newtheorem{exm}[thm]{Example}

 \newtheorem{prop-def}[thm]{Proposition-Definition}
\newtheorem{lem-def}[thm]{Lemma-Definition}
\newtheorem{rmk}[thm]{Remark}

\numberwithin{equation}{section}

\DeclareMathOperator{\Hom}{Hom}%
\DeclareMathOperator{\Ext}{Ext}%
\DeclareMathOperator{\add}{add} 
\DeclareMathOperator{\pd}{projdim}%
\DeclareMathOperator{\Gen}{Gen}%

\DeclareMathOperator{\Homeo}{Homeo}%

\newcommand{\commentout}[1]{}

\newcommand{\modu}{\mbox{-}\operatorname{mod}}
\newcommand{\pwf}{\mbox{-}\operatorname{pwf}}
\newcommand{\Int}{\mbox{-}\operatorname{int}}
\newcommand{\tilt}{\mbox{-}\operatorname{tilt}}
\newcommand{\Intv}{\operatorname{Intv}}
\newcommand{\MRig}{\operatorname{MR}}
\newcommand{\MCom}{\operatorname{MC}}
\newcommand{\Rig}{\operatorname{Rig}}
\newcommand{\Com}{\operatorname{Com}}
\newcommand{\supp}{\operatorname{supp}}
\newcommand{\Supp}{\operatorname{Supp}}
\newcommand{\MRIC}{\operatorname{MRIC}}
\newcommand{\MCIC}{\operatorname{MCIC}}
\newcommand{\MRCA}{\operatorname{MRCA}}
\newcommand{\MRCB}{\operatorname{MRCB}}

 \newcommand{\bR}{{\mathbb {R}}}

   \newcommand{\bA}{{\mathbb {A}}}
 \newcommand{\bP}{\mathbb{P}}

\newcommand{\bZ}{\mathbb{Z}}
\newcommand{\fU}{\mathbf{U}}
\newcommand{\bI}{{\mathbb{I}}}
\newcommand{\bJ}{{\mathbb{J}}}
\newcommand{\fD}{\mathbf{D}}

\newcommand{\fT}{\mathbf{T}}
\newcommand{\suc}{{\rm{Suc}}}
\newcommand{\pre}{{\rm{Pre}}}

\newcommand{\cA}{\ensuremath{{\mathcal{A}}}}

\newcommand{\cF}{\ensuremath{{\mathcal{F}}}}

\newcommand{\cM}{\ensuremath{{\mathcal{M}}}}
\newcommand{\cN}{\ensuremath{{\mathcal{N}}}}

\newcommand{\cR}{\ensuremath{{\mathcal{R}}}}

\newcommand{\cT}{\ensuremath{{\mathcal{T}}}}

\newcommand{\rep}{\operatorname{rep}}
\newcommand{\Rep}{\operatorname{Rep}}

\newcommand{\cok}{\operatorname{Coker}}
\newcommand{\Ima}{\operatorname{Im}}
\newcommand{\bk}{{k}}

\newcommand{\mapsfrom}{\mathrel{\reflectbox{\ensuremath{\mapsto}}}}

\begin{document}

\title[Tilting theory on continuous quivers (I)]{Tilting theory on continuous quivers (I) \\ maximal rigid and tilting subcategories}
\author [Haowei Liu, Zonghui Liu, Shijie Zhu] {Haowei Liu, Zonghui Liu, Shijie Zhu$^*$}
\address{School of Mathematics and Statistics, Nantong University, Jiangsu 226019, P. R. China}
\thanks{The corresponding author S.Z. is sponsored by NSF 12201321.}
\thanks{3256700577@qq.com  \ \ 2352674950@qq.com\ \ shijiezhu$\symbol{64}$ntu.edu.cn}

 \maketitle

\begin{abstract}  Let $\mathrm{rep}(\mathbb I)$ be the category of finitely generated persistence modules on the interval $\mathbb I\subseteq\mathbb R$. We study the maximal rigid subcategories of $\mathrm{rep}(\mathbb I)$. Maximal rigid interval consistent subcategories are classified via tilting modules over the corresponding discrete type $A$ quiver. We compare several related notions, (weak) tilting and maximal rigidity, in both the context of hereditary abelian categories and $\mathrm{rep}(\mathbb I)$ and prove that every maximal rigid interval consistent subcategory is weak tilting and every maximal rigid consistent below subcategory is tilting. We also establish a continuous analogue of Ingalls-Thomas's bijection by showing a bijection between integrally supported maximal rigid consistent below subcategories and integrally generated torsion classes. 
  \vskip5pt 

{\it Key words and phrases. persistence module, interval consistency, maximal rigidity, (weak) tilting subcategories, integrally supported subcategories, integrally generated torsion classes.}

{\it MSC2020: 16G20, 16D90, 16S90.}
\end{abstract}

\setcounter{tocdepth}{1} 
\tableofcontents

 \section {Introduction}
 \subsection{Background}
 
Tilting theory origins from  Bernstein-Gelfand-Ponomarev's  reflection functors 
and the subsequent study of Coxeter functors by Auslander-Platzeck-Reiten~\cite{APR}. Brenner-Butler~\cite{BB} introduced the notion of tilting
modules, and Happel-Ringel~\cite{HR} developed the classical formulation that underlies much of the subsequent theory. Let $A$ be a finite dimensional algebra. A basic left $A$-module $T$ is called {\bf tilting} if 
 \begin{enumerate}
 \item[(T1)] $\pd T\leq 1$;
 \item[(T2)] $\Ext^1(T,T)=0$;
 \item[(T3)] There is an exact sequence $0\to A\to T_0\to T_1\to 0$, where $T_0$, $T_1\in\add T$.
 \end{enumerate}

As its wide applications, tilting theory has soon become one of the most important topics in representation theory. In the process of development,  some variations of tilting modules are studied from different perspectives.  Ingalls-Thomas \cite{IT2} introduced the notion of support tilting modules over hereditary algebras, which extends the classical theory by allowing a module to be tilting over its support algebra, and they established the following bijection: 
\begin{thm}\cite[Theorem 2.11]{IT2}\label{thm:IT}
	Let $Q$ be a finite acyclic quiver. Then there is a bijection between support tilting modules over $kQ$ and finitely generated torsion classes in $kQ$-mod.
\end{thm}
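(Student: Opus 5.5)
The plan is to set up two mutually inverse maps between support tilting modules over $A=kQ$ and finitely generated torsion classes in $A$-mod. In one direction, send a support tilting module $T$ to $\Gen T$. In the other, send a finitely generated torsion class $\mathcal T$ to the direct sum $P(\mathcal T)$ of the indecomposable Ext-projectives of $\mathcal T$, meaning the indecomposables $X\in\mathcal T$ with $\Ext^1(X,\mathcal T)=0$. Because $A$ is hereditary, condition (T1) holds automatically. So "support tilting" means only that $T$ is rigid and that the number of its indecomposable summands equals the number of simples in its support; equivalently, $T$ is tilting over $A/\langle e\rangle$, where $e$ is the idempotent of the vertices outside $\supp T$.

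First I check that $\Gen T$ is a torsion class. It is closed under quotients by definition. For extensions, take $0\to X\to Y\to Z\to 0$ with $X,Z\in\Gen T$. Since $\Ext^1(T,X)=0$ (use $X$ a quotient of $T^n$, rigidity, and right exactness of $\Ext^1$ in the hereditary setting), the map $\Hom(T,Y)\to\Hom(T,Z)$ is surjective. Hence $Y$ is generated by $T$. The class is finitely generated because $T$ generates it. I then show that the Ext-projectives of $\Gen T$ are exactly $\add T$:
\begin{itemize}
\item Every summand of $T$ is Ext-projective, again by rigidity and right exactness of $\Ext^1(T,-)$.
\item Conversely, let $X$ be Ext-projective. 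Work over the support algebra $B=A/\langle e\rangle$, where $T$ is tilting. Take the universal extension $0\to T'\to E\to X\to 0$ with $E\in\add T$, built from a minimal right $\add T$-approximation. Its kernel lies in $\Gen T$, so the sequence splits and $X\in\add T$.
\end{itemize}
This gives $P(\Gen T)=T$ up to basic multiplicity.

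For the reverse direction, let $\mathcal T=\Gen M$ be finitely generated. Its Ext-projectives form a rigid module $P$, since $\Ext^1(P,P)=0$ holds inside $\mathcal T$. I must show that $P$ generates $\mathcal T$ and that $P$ has as many summands as $\supp\mathcal T$ has vertices. For generation, take a Bongartz-type construction relative to $\mathcal T$. Choose a universal extension $0\to M^{m}\to E\to B'\to 0$, where $B'$ is the maximal quotient of the support algebra lying in $\mathcal T$ (that is, the torsion part). Then $E\oplus M$-summands that are Ext-projective generate $\mathcal T$. Concretely, every $X\in\mathcal T$ has a minimal right $\add P$-approximation, and its kernel can be shown Ext-orthogonal, which forces surjectivity.

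For the counting, let $S$ be the support. Then $P$ is a rigid $A/\langle e_S\rangle$-module with $\Gen P=\mathcal T$ sincere on $S$. By Bongartz completion, $P$ can be completed to a tilting module $P\oplus P'$ over the support algebra, and every summand of $P'$ lies in $\Gen P$. Those summands are Ext-projective in $\mathcal T$, hence already in $P$. Thus $|P|=|S|$ and $P$ is support tilting.

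Finally I check that the compositions are the identity: $\Gen P(\mathcal T)=\mathcal T$ and $P(\Gen T)=T$, both established above. I expect the main obstacle to be proving that the Ext-projectives generate $\mathcal T$, together with the summand count. For this I would rely on the Bongartz completion argument within the support algebra, using heredity so that torsion classes are determined by their Ext-projectives.
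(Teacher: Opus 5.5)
The paper gives no proof of this statement; it is quoted from \cite{IT2}. Your two maps $T\mapsto\Gen T$ and $\mathcal T\mapsto P(\mathcal T)$ are the standard ones, and your first half is sound. Do say explicitly why the kernel $K$ of a right $\add T$-approximation of an Ext-projective $X$ lies in $\Gen T$: one has $\Ext^1(T,K)=0$, and $T^{\perp_1}=\Gen T$ because $T$ is tilting over its support algebra.

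The second half has a genuine gap, exactly where you expected it. \emph{Generation:} nothing in your sketch shows that the Ext-projectives of $\mathcal T=\Gen M$ generate $\mathcal T$.
\begin{enumerate}
\item The torsion part $t(B)$ is a submodule, not a quotient, and in general there is no largest quotient of $B$ lying in $\mathcal T$.
\item A universal extension by $M^{m}$ yields nothing Ext-projective, since $M$ is not rigid.
\item Ext-orthogonality of the kernel of a right $\add P$-approximation cannot force surjectivity: if $P$ were $0$, the kernel would be $0$. Surjectivity of that map \emph{is} the claim.
\end{enumerate}
What works is the following. Let $B$ be the support algebra of $\mathcal T$. Since $B$ is projective, a left $\add M$-approximation of $B$ is a left $\mathcal T$-approximation, so there is a left minimal one $f\colon B\to T_0$. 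Then $T_0$ generates $\mathcal T$, because every epimorphism $B^n\to X$ factors through $f^n$. Moreover $T_0$ is Ext-projective. Given $0\to Y\to E\xrightarrow{\pi}T_0\to 0$ with $Y\in\mathcal T$, we have $E\in\mathcal T$, so a lift of $f$ to $E$ has the form $gf$. Hence $(\pi g)f=f$, so $\pi g$ is an automorphism by left minimality, and the sequence splits.

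\emph{Counting:} the Bongartz complement $E$ of $P$ lies in $P^{\perp_1}=\Gen(P\oplus E)$. So the claim that its summands lie in $\Gen P$ is equivalent to $\Gen P=P^{\perp_1}$, i.e.\ to $P$ being tilting over $B$, which is what you are trying to prove. You need a complement produced inside $\mathcal T$.

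Since $B$ is hereditary and $\mathcal T$ is sincere on $B$, $\mathcal T$ contains $DB$. Indeed, for an injective $I$ the torsion-free quotient $I/t(I)$ is again injective. If it were nonzero, some indecomposable injective $I_j$ would lie in $\mathcal T^{\perp}=\{Z:\Hom(\mathcal T,Z)=0\}$, forcing $e_jY\cong D\Hom(Y,I_j)=0$ for all $Y\in\mathcal T$. Hence $\mathcal T$ is faithful, and $f\colon B\to T_0$ is a monomorphism, since its kernel is $\mathrm{ann}\,\mathcal T$.

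Now apply $\Hom(-,Y)$, $Y\in\mathcal T$, to $0\to B\to T_0\to T_1\to 0$. Since $f$ is an approximation, this gives $\Ext^1(T_1,Y)=0$. Thus $T_0\oplus T_1$ is rigid and satisfies (T3) over $B$, so it is tilting over $B$ with $\Gen(T_0\oplus T_1)=\mathcal T$. Your first-half argument then shows that every Ext-projective of $\mathcal T$ lies in $\add(T_0\oplus T_1)$. This gives $\Gen P(\mathcal T)=\mathcal T$ and the summand count. It also gives the finiteness of the set of indecomposable Ext-projectives, which you used tacitly when forming $P(\mathcal T)$.
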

Adachi-Iyama-Reiten~\cite{AIR}  later generalized the notion to support $\tau$-tilting modules for arbitrary finite-dimensional algebras and showed a bijection between  support $\tau$-tilting modules and functorially finite torsion classes.  

There is another generalization in a different flavor, which replaces a single tilting module by an additive subcategory of an abelian category.
\begin{defn}\cite[Definition 2.17]{BBOS}\label{def:tilt_sub} A subcategory  $\fT$ of an abelian category $\cA$ is {\bf weak tilting} provided that
 \begin{enumerate}
 \item[(W1)] $\pd T\leq 1$ for each $T\in \fT$;
 \item[(W2)] $\Ext^1(T,T')=0$, for any $T,T'\in \fT$;
 \item[(W3)] For each projective object $P$ in $\cA$, there is an exact sequence $0\to P\to T_0\to T_1\to 0$, where $T_0$, $T_1\in\fT$.
 \end{enumerate}
 A weak tilting subcategory is {\bf tilting} if it is contravariantly finite. 
 \end{defn}
Notice that in the category of finitely generated modules over finite dimensional algebras, weak tilting and tilting subcategories coincide.
\begin{prop}\label{prop:tilting_eqv}
 Let $A$ be a finite dimensional algebra, then the following are equivalent.
 \begin{enumerate}
 \item $T$ is a tilting module;
 \item $\add T$ is a tilting subcategory;
 \item $\add T$ is a weak tilting subcategory;
 \item $\Gen(T)=T^{\perp_1} (:=\{X\in A\modu | \Ext^1(T,X)=0\})$;
 \end{enumerate} 
 Furthermore, if $A$ is hereditary, then these conditions are equivalent to 
 \begin{enumerate}
 \item[(5)] $T$ is a maximal rigid module.
 \end{enumerate}
 \end{prop}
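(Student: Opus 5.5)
We prove the cycle of equivalences using the classical theory of finite dimensional algebras; the one genuinely new point is how (3) relates to (1). Throughout, $T$ is basic and $\add T$ is the additive closure in $A\modu$.

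\textbf{The equivalences (1)--(3).} For (1)$\Leftrightarrow$(2), note that $\add T$ is always contravariantly finite: a right $\add T$-approximation of $X$ is $T^{n}\to X$, built from a $k$-basis of $\Hom(T,X)$. So (2)$\Leftrightarrow$(3) holds automatically. For (1)$\Leftrightarrow$(3), conditions (T1),(T2) are the same as (W1),(W2) for $\add T$, since projective dimension and $\Ext^1$ are additive. (W3) for the projective $A$ is exactly (T3). Conversely, every indecomposable projective $P$ is a summand of $A$. Given $0\to A\to T_0\to T_1\to 0$, split $A=P\oplus P'$ and push out along the projection $A\to P$. This gives $0\to P\to T_0'\to T_1\to 0$, where $T_0'$ is the cokernel of $P'\to T_0$. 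The remaining issue is to show $T_0'\in\add T$, which we do not know a priori. Instead we use the standard fact that (T1)--(T3) is equivalent to (T1),(T2) together with $|T|=|A|$, the number of nonisomorphic indecomposable summands (Bongartz's lemma plus Happel--Ringel). For (W3)$\Rightarrow$(T3) no splitting is needed: the sum of the sequences for each indecomposable summand of $A$ gives (T3). So (3)$\Rightarrow$(1) is immediate. For (1)$\Rightarrow$(3), take $P=A$, and handle general projectives $P\in\add A$ by summing the sequences for indecomposable projectives. For these we use Bongartz's completion argument: the universal extension $0\to P\to E\to T^{m}\to 0$ has $E\in{}^{\perp_1}(T^{\perp_1})\cap T^{\perp_1}=\add T$ when $T$ is tilting.

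\textbf{(1)$\Leftrightarrow$(4).} This is the classical characterization $\Gen T=T^{\perp_1}$.
\begin{itemize}
\item[($\Rightarrow$)] By Auslander--Reiten, for $\pd T\le1$ we have $T^{\perp_1}=\{X\mid \Hom(X,\tau T)=0\}$. This class is closed under quotients, and it contains $T$ by (T2), so $\Gen T\subseteq T^{\perp_1}$. Conversely, for $X\in T^{\perp_1}$, apply $\Hom(-,X)$ to (T3) and take the trace map $\operatorname{tr}_T X\hookrightarrow X$. Since $\Ext^1(T_1,X)=0$, every map $A\to X$ factors through $T_0$, so $X$ is generated by $T$.
\item[($\Leftarrow$)] We get $\Ext^1(T,T)=0$ from $T\in\Gen T$. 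Next, $\pd T\le1$ follows because $\Gen T=T^{\perp_1}$ is closed under quotients, and so contains $\Gen(DA)$-type injective quotients. Concretely, $\Ext^1(T,-)$ is right exact on $A\modu$, so $\Ext^2(T,-)=0$; this uses that $\Ext^1(T,I)=0$ for injective $I$ ($I\in T^{\perp_1}$) together with a dimension shift. Finally, (T3) comes from the Bongartz-type universal extension $0\to A\to E\to T^m\to 0$. Here $E\in T^{\perp_1}=\Gen T$, and a standard argument shows $E$ is Ext-projective in $\Gen T$, so $E\in\add T$.
\end{itemize}

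\textbf{Hereditary case, (1)$\Leftrightarrow$(5).} If $A$ is hereditary, then (T1) is automatic. A tilting module is maximal rigid: if $T\oplus M$ is rigid, then $M\in T^{\perp_1}=\Gen T$, and $\Ext^1(M,T)=0$ makes $M$ Ext-projective in $\Gen T$, hence $M\in\add T$. Conversely, if $T$ is maximal rigid, take the Bongartz complement $B$ from the universal extension of $A$ by $T$. Then $T\oplus B$ is tilting, in particular rigid, so maximality gives $B\in\add T$. Hence $T$ is tilting.

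\textbf{Main obstacle.} The delicate step is (4)$\Rightarrow$(1), specifically extracting $\pd T\le1$ and showing the middle term of the universal extension lies in $\add T$. Our plan is to rely on the standard Ext-projectives argument for the torsion class $\Gen T$, citing Assem--Simson--Skowro\'nski.
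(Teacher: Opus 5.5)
Your proof is correct and follows the same outline as the paper: contravariant finiteness of $\add T$ for (2)$\Leftrightarrow$(3), the classical characterization $\Gen T=T^{\perp_1}$ (which the paper simply cites from Assem--Simson--Skowro\'nski) for (1)$\Leftrightarrow$(4), and Bongartz completion for (5); the difference is that you write out the standard arguments instead of citing them. Your universal-extension argument for (W3) at every projective, not just $A$, fills in a step the paper calls ``straightforward''; the aside about $|T|=|A|$ is never used and can be deleted.
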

 \begin{proof}
 The implication $(1)\iff(2)$ is straightforward. Since $\add T$ is representation finite-type, it is always contravariantly finite. Thus, $(2)\iff(3)$ holds. The implication $(1)\iff(4)$ is well-known \cite[VI Theorem 2.5]{ASS}. The implication $(1)\iff(5)$ follows from Bongartz completion Theorem \cite{Bon}.
 \end{proof}
 In this paper, we will study (weak) tilting subcategories in the context of representations of continuous quivers. Unlike tilting modules of path algebras, similar concepts such as maximal rigidity, weak tilting, and tilting may not coincide. Understanding their relationship is therefore essential when passing from module categories of finite dimensional algebras to categories of continuous representations.
 
\vskip5pt
Persistence modules arise in topological data analysis by applying homology functor to a filtration of spaces. Algebraically, a persistence module over a poset $\mathbb P$ is a functor from $\mathbb P$, viewed as a category, to the category of $k$-vector spaces. The  poset $\mathbb R$ is usually considered as a continuous counterpart of  linearly oriented type $A$ quiver, due to the similarity of their representations. A decomposition theorem of Crawley-Boevey~\cite{CB} provides a fundamental structural result: pointwise finite-dimensional persistence modules over $\mathbb R$ decompose uniquely, up to isomorphism, into interval modules. A finitely generated persistence $\bR$-module is finite direct sum of interval modules. Denote by $\rep(\bR)$ the full subcategory of finitely generated persistence $\bR$-modules.

Despite the motivation from persistence theory, studying continuous representations over posets has its own interest in representation theory. This can be dated back to Chaptal and Loupias's work  \cite{Ch, Lou} on representation types of finite posets. Igusa and Todorov  \cite{IT1} introduced the notion of continuous cluster categories, extending the classical cluster theory.  In a series work of Igusa, Rock and Todorov~\cite{IRT,R,IRT3,IRT4}, the authors developed the representation theory of continuous quivers of type $A$. A continuous analogue of Nakayama algebras was studied in \cite{RZ}. Thread quivers, as introduced by Berg and Van Roosmalen \cite{BV} and developed recently by Paquette, Rock and Yildirim \cite{PRY}, replace arrows between vertices of quivers by totally ordered sets, revealing the interplay between representations of discrete and continuous quivers. These approaches motivate the study of which features in the representation theory of finite-dimensional algebras still persist and which concepts can be generalized in representations of continuous quivers.

 \subsection{Main results}
Fix a field $k$ and integers $m,n$. We consider the four interval types 
\[
    \bI_{m,n}=[m,n],\qquad
    \bI^+_{m,n}=(m,n],\qquad
    \bI^-_{m,n}=[m,n),\qquad
    \bI^\pm_{m,n}=(m,n).
\] We use the notation $\bI^*_{m,n}$ when the interval type need not be specified. More general intervals can be treated via categorical equivalences induced by order-preserving homeomorphisms using these normalized models (see Proposition \ref{prop:homeo}).

A nonempty subinterval of $\bI^*_{m,n}$ is called  {\bf integral} if both endpoint values of the interval are integers, and {\bf half-integral} if at least one endpoint value is an integer. A representation is integral or half-integral if it is a finite direct sum of interval modules of
the corresponding kind. We denote the full subcategory of finitely generated persistence modules and integral representations by $\rep(\bI^*_{m,n})$ and $\bI^*_{m,n}\Int$ respectively. There is an equivalence of abelian categories $\fD^*_{m,n}:kQ^*_{m,n}\modu
\xrightarrow{\ \sim\ }\bI^*_{m,n}\Int$ (see Lemma \ref{lem:eqv_A}), which builds a bridge between $\bI^*_{m,n}$-representations and quiver representations.

In this paper, we study maximal rigid subcategories of the category of finitely generated representations over an interval $\bI\subseteq\bR$.  Recall that a subcategory $\fT$ of an abelian category is {\bf rigid} if $\Ext^1(T,T')=0$ for all $T,T'\in\fT$, and is {\bf maximal rigid} if it is maximal among rigid subcategories.  Liu-Zhao \cite{LZ1} studied maximal rigid representations on the closed interval $[0,1]$ subject to a condition determined by a finite sequence of critical points, and showed a connection between these subcategories and tilting modules \cite[Theorem 2.5]{LZ1}. 

We reformulate this crucial condition as the {\bf interval consistency} condition (see Definition \ref{def:int_consist}). In brief, an interval consistent set $\cM$ contains half-integral subintervals of $\bI_{m,n}^*$ such that for each $x\in(i,i+1)\subseteq\bI_{m,n}^*$,  $\cM$ contains exactly one open and one closed interval with one endpoint $x$ and the opposite endpoint remains fixed. If all the left (resp. right) endpoints of intervals in $\cM$ are integers, then $\cM$ is called {\bf consistent below (resp. above)}. An additive subcategory $\fT$ is called interval consistent (resp. consistent below (above)) if it consists of direct sums of interval modules $k_I$ where $I$ comes from an interval consistent (resp. consistent below (above)) set $\cM$. These conditions make continuous families of interval modules to be controlled by finitely many integral representations.

\vskip5pt
Our first goal is to  classify maximal rigid interval consistent subcategories over all four types of intervals $\bI_{m,n}^*$.
 
  
\begingroup
\setcounter{thm}{0}
\renewcommand\thethm{\Alph{thm}}

\begin{thm}[Corollary \ref{cor:rigid_complete}, Lemma \ref{lem:tilt}]\label{thm:A} The following statements hold.
\begin{enumerate} 
\item If $\fT$ is a maximal rigid interval consistent subcategory of $\rep(\bI_{m,n}^*)$, then $$\widehat\fT=\fT\cap\bI_{m,n}^*\Int$$ is a  maximal rigid subcategory of $\bI_{m,n}^*\Int$. 
\item The assignment (given by Lemma \ref{lem:eqv_A})
   $$
        [T]\longmapsto\operatorname{add}\fD^*_{m,n}(T)
   $$
   is a bijection between isomorphism classes of basic tilting $kQ^*_{m,n}$-modules and maximal rigid subcategories of $\bI^*_{m,n}\Int.$ 
\item Each maximal rigid subcategory $\widehat\fT$ of $\bI_{m,n}^*\Int$ has a unique maximal rigid
   consistent below (above)  completion, and exactly $2^{n-m}$  maximal rigid interval consistent completions in $\rep(\bI_{m,n}^*)$. 
\end{enumerate}
\end{thm}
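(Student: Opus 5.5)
The plan is to reduce everything to the discrete quiver $Q^*_{m,n}$ via the equivalence $\fD^*_{m,n}$ of Lemma \ref{lem:eqv_A}, and then study how non-integral intervals can be attached to an integral rigid set.

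\emph{Step (2).} Since $\fD^*_{m,n}$ is an exact equivalence of abelian categories, it preserves $\Ext^1$. Provided $\bI^*_{m,n}\Int$ is closed under extensions in $\rep(\bI^*_{m,n})$ (an extension of integral interval modules decomposes into interval modules whose endpoints are among the original endpoints), $\Ext^1$ computed in $\bI^*_{m,n}\Int$ agrees with $\Ext^1$ in $\rep(\bI^*_{m,n})$. Hence rigid, respectively maximal rigid, subcategories of $\bI^*_{m,n}\Int$ correspond to rigid, respectively maximal rigid, subcategories of $kQ^*_{m,n}\modu$. The latter category has finitely many indecomposables and is hereditary. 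A rigid subcategory is therefore $\add$ of a rigid module, and maximal rigid modules are exactly the tilting modules by Proposition \ref{prop:tilting_eqv}(5). This gives the bijection $[T]\mapsto \add\fD^*_{m,n}(T)$.

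\emph{Step (1).} Let $\fT$ be maximal rigid interval consistent. Clearly $\widehat\fT$ is rigid. Suppose $\widehat\fT$ is not maximal in $\bI^*_{m,n}\Int$, and pick an integral indecomposable $k_J\notin\widehat\fT$ with $\widehat\fT\cup\{k_J\}$ rigid. I would show that $\fT\cup\{k_J\}$ is still rigid and interval consistent, contradicting maximality. Interval consistency only constrains the half-integral, non-integral intervals, so adding $k_J$ keeps the set consistent. For rigidity, I would compute $\Ext^1$ between interval modules explicitly: $\Ext^1(k_I,k_{I'})\neq0$ iff the endpoints interleave in a specific way, depending on open/closed types. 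Then I would argue that a non-integral $k_I\in\fT$ with endpoint $x\in(i,i+1)$ has $\Ext$-behaviour against an integral $k_J$ governed by the integral intervals obtained by pushing $x$ to $i$ or $i+1$. Interval consistency of $\fT$ forces those integral "shadows" to lie in $\widehat\fT$ (or to be controlled by it), so vanishing of $\Ext$ against $\widehat\fT$ propagates to all of $\fT$.

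\emph{Step (3).} Fix $\widehat\fT$ maximal rigid integral. For each unit interval $(i,i+1)$ there is a binary choice of which endpoint of the family is held fixed: in the consistent-below case the fixed endpoint is the left, integral one. I would show that, given $\widehat\fT$, the rigidity requirement together with the consistency requirement (exactly one open and one closed interval with moving endpoint $x$) determines the fixed endpoint's value uniquely once the side is chosen. The fixed value should be read off from the tilting module, e.g. as the summand of $T$ supported near $i$ with maximal/minimal extent. Conversely, every such choice yields a rigid set, and the result is maximal rigid, by step (1) and a check that no other non-integral interval can be added. Independent choices over the $n-m$ unit intervals give $2^{n-m}$ completions, and choosing "below" (resp. "above") everywhere gives the unique consistent below (above) completion.

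The main obstacle is the existence-and-uniqueness claim in step (3), namely that for each unit interval and each side exactly one compatible family exists, together with the $\Ext$ case analysis across open/closed endpoint types that underlies step (1). I would first do the $\Ext^1$ computation for interval modules carefully and then treat one unit interval at a time using the tilting module's local structure.
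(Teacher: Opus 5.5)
Your overall architecture matches the paper. Step (2) is Lemma \ref{lem:tilt}, and your remark that $\bI^*_{m,n}\Int$ is extension-closed in $\rep(\bI^*_{m,n})$, so the two $\Ext^1$'s agree, is a point the paper leaves implicit. Step (1) is the ``only if'' half of Lemma \ref{lem:m_int}, and Step (3) is Theorem \ref{thm:MCIC}. There is a genuine gap, though: in Steps (1) and (3) everything that needs proof is deferred, and the mechanism you describe for Step (1) fails in one case.

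Let $I=|\mu,\nu|$ be integral, compatible with $\widehat\cM$, and incompatible with some $J=|\alpha,\beta|\in\cM$ with $i<\|\alpha\|<i+1$. There are two cases.
\begin{itemize}
\item If $\mu\prec\alpha\preceq\suc(\nu)\preceq\beta$, your ``push to $i+1$'' witness $|i+1,\beta|$ works. However, it lies in $\cM$ only by \emph{maximality}: one checks it is compatible with $\cM_{|\alpha,\beta|}$ and applies Lemma \ref{lem:key}(3). Interval consistency alone yields no integral intervals at all; for example, $\{[0,x],[0,x):0<x<1\}$ is consistent below on $\bI_{0,1}$.
\item If $\alpha\prec\mu\preceq\suc(\beta)\preceq\nu$, pushing $\alpha$ to $i$ or $i^+$ fails. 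On $\bI_{0,2}$ take $\widehat\cM=\{[0,0],[0,1),[0,1],[0,2),[0,2]\}$, the upper covering $\{[x,2),(x,2):1<x<2\}$ of $(1,2)$, and, say, $\{[0,x],[0,x):0<x<1\}$ on $(0,1)$. Then $[1,2)$ and $(1,2)$ are not in $\cM$, since both clash with $[0,1]$. The integral interval $[2,2]$ clashes with every $[x,2)$, and the only member of $\widehat\cM$ incompatible with it is $[0,2)$.
\end{itemize}
The correct witness in the second case is $|\varphi(\beta),\beta|$ with $\varphi(\beta)=\inf\{\gamma:|\gamma,\beta|\in\cM\}$. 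Showing that this infimum is attained by an integral member of $\cM$ is the real content (Lemma \ref{lem:inf_int}). It goes by induction on $\|\beta\|$: a non-attained infimum $l^+$ produces, via the dichotomy ``$|l^+,\beta|\in\cM$ or some $|\gamma,l|\in\cM$'' (Lemma \ref{lem:bound_int}(2)), the interval $|\varphi(l),\beta|\in\cM$, which is a contradiction.

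In Step (3), the fixed endpoint must be the matching endpoint of the \emph{roof}. This is the smallest $|\sigma_i,\tau_i|\in\widehat\cM$ containing $(i,i+1)$; it is unique because two minimal ones would overlap without nesting. Three things then need proof.
\begin{itemize}
\item (a) Uniqueness. Your claim is true and has a direct proof. Suppose $|\sigma,x|$ is compatible with $\widehat\cM$ for all $x\in(i,i+1)$. Compatibility with the roof gives $\sigma\succeq\sigma_i$. Against members of $\widehat\cM$ inside $|\sigma_i,i|$, the interval $|\sigma,\tau_i|$ has the same $\Ext^1$-behaviour as $|\sigma,x|$, and it is nested with all other members inside the roof. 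By Lemma \ref{lem:key}(3) and maximality, $|\sigma,\tau_i|\in\widehat\cM$, and minimality of the roof forces $\sigma=\sigma_i$.
\item (b) Existence. The chosen families must be mutually compatible, and this requires roofs of different unit intervals to be distinct. If $(i,i+1)$ and $(j,j+1)$ with $i<j$ shared a roof $|\sigma,\tau|$, the upper choice on the first and the lower choice on the second would give $|x,\tau|$ and $|\sigma,y|$, which overlap without nesting. Distinctness again uses maximality of $\widehat\cM$, since otherwise $|\sigma_i,j|$ could be added (Lemma-Definition \ref{lemdef:exist1}, Lemma \ref{lem:exist2}).
\item (c) Maximality of the completion. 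This does not follow from Step (1), which is the converse direction. What you need is precisely your ``check'': a non-integral interval compatible with an interval consistent set already lies in it (Lemma \ref{lem:max_rig_ic}).
\end{itemize}
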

 
Statement (1) originated from \cite[Lemma 4.1(2)]{LZ1}, where it was stated as a direct observation and we provide a detailed proof. In statement (3), the multiplier $2^{\,n-m}$ records two different choices of consistency conditions, below or above, for each open interval $(i,i+1)$ inside $\bI_{m,n}^*$. We also include an explicit description of all the maximal rigid interval consistent completions of a maximal rigid subcategory $\widehat\fT$ of $\bI_{m,n}^*\Int$ using the notion of ``roofs'' (see Lemma-Definition \ref{lemdef:exist1}). Thus, the classification of maximal rigid interval consistent subcategories of $\rep(\bI_{m,n}^*)$ is reduced to the classification of tilting modules over the path algebras $kQ_{m,n}^*$, which is well-known \cite{BK}.

\vskip5pt
Next, we compare various notions about tilting subcategories both in arbitrary hereditary abelian categories with enough projective objects and in specific hereditary abelian subcategories of $\rep(\mathbb R)$. 
In summary, for an arbitrary hereditary abelian category with enough projective objects, we have the following diagram of implications. 

\begin{center}
\begin{figure}[h]
\begin{tikzpicture}
\draw (-4,0) node[minimum size=2cm,draw] {$\begin{smallmatrix}tilting\\ subcategory\end{smallmatrix}$};
\draw (0,0) node[minimum size=2cm,draw]{$\begin{smallmatrix}weak\ tilting\\ subcategory\end{smallmatrix}$};
\draw (4,0) node[minimum size=2cm,draw] {$\begin{smallmatrix}maximal\ rigid\\ subcategory\end{smallmatrix}$};
\draw (0,-3) node[minimum size=1cm,draw] {$\begin{smallmatrix} subcategory\ \fT\ satisfying\ \fT^{\perp_1}=\Gen(\fT)\end{smallmatrix}$};
\draw (-2,.5) node {$\stackrel{(a)}\implies$};
\draw (-2,-0.5) node {$\not\!\!\!\Longleftarrow$};
\draw (-2,-0.75) node {\tiny$(c)$};
\draw (2,.5) node {$\stackrel{(b)}\implies$};
\draw (2,-0.5) node {$\not\!\!\!\Longleftarrow$};
\draw (2,-0.75) node {\tiny$(d)$};
\draw (0,-1.75) node[rotate=270] {$\implies$};
\draw (-0.25,-1.75) node {\tiny$(e)$};
\draw(2,-1.75) node[rotate=45]  {$\Longleftarrow$};
\draw (1.5,-1.75) node {\tiny$(g)$};\draw (2,-1.75) node[rotate=45] {$?$};
\draw(2.5,-1.75) node[rotate=45]  {$\not\!\!\! \Longrightarrow$};
\draw (3,-1.75) node {\tiny$(f)$};
\end{tikzpicture}
\caption{\centering{Relations among various notions in hereditary abelian categories with enough projective objects}}
\end{figure}
\end{center}

In this diagram, the implication $(a)$, $(e)$ and negative implication $(c)$ was shown by \cite{BBOS}. We prove the implication $(b)$ in Proposition \ref{prop:weak_tilt_rigid}, and examples are given to illustrate the negative implications $(d),(f)$ (see Example \ref{exm:max_rig_not_tilt} and \ref{exm:max_rig_not_perp1}).  
Theorem below shows that the implications $(d)$  holds  for interval consistent subcategories of $\rep(\bI_{m,n}^*)$. Moreover, the implication $(g)$ holds for any maximal rigid subcategory of $\rep(\bI_{m,n}^*)$, but fails for arbitrary  hereditary abelian category (Example \ref{exm:max_rig_not_perp1}). It is an open question whether it holds for hereditary abelian categories with enough projective objects.
 
\begin{thm}[Theorem \ref{thm:MRCB_tilt}, Proposition \ref{prop:max_rig_perp} ]\label{thm:B}The following statements hold.
\begin{enumerate}
\item Every maximal rigid interval consistent subcategory of $\rep(\bI_{m,n}^*)$ is weak tilting. 
\item Every maximal rigid consistent below subcategory of $\rep(\bI_{m,n}^*)$ is tilting.
\item For any interval $\bI\subseteq \bR$ and any $\fT$ is maximal rigid  of $\rep(\bI)$, $\fT^{\perp_1}=\Gen(\fT)$.
\end{enumerate}
\end{thm}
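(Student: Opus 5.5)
We prove the three parts separately. Throughout we use that $\rep(\bI)$ is hereditary, so (W1) is automatic. Maximal rigidity gives (W2). The work is therefore in (W3), in contravariant finiteness, and in the equality $\fT^{\perp_1}=\Gen(\fT)$.

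For (1), let $\fT$ be maximal rigid interval consistent. By Theorem~\ref{thm:A}(1), $\widehat\fT=\fT\cap\bI^*_{m,n}\Int$ is maximal rigid in $\bI^*_{m,n}\Int$. By Theorem~\ref{thm:A}(2), $\widehat\fT=\add\fD^*_{m,n}(T)$ for a tilting $kQ^*_{m,n}$-module $T$. The indecomposable projectives of $\rep(\bI^*_{m,n})$ are the modules $P_x=k_{[x,n]}$ (or $k_{[x,n)}$, depending on the right endpoint type), for $x\in\bI^*_{m,n}$.
\begin{itemize}
\item If $x$ is an integer, $P_x$ is integral and equals $\fD^*_{m,n}$ of a projective $kQ^*_{m,n}$-module. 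Applying the exact equivalence $\fD^*_{m,n}$ to the sequence (T3) for $T$ gives the required sequence $0\to P_x\to T_0\to T_1\to 0$ inside $\widehat\fT\subseteq\fT$.
\item If $x\in(i,i+1)$, interval consistency gives one open and one closed interval in $\cM$ with endpoint $x$ and the other endpoint fixed. Using the roofs of Lemma-Definition~\ref{lemdef:exist1}, I would write down explicitly a short exact sequence $0\to P_x\to T_0\to T_1\to0$. Here $T_0$ contains the closed-at-$x$ interval module of $\cM$, and $T_1$ contains an integral summand or the open-at-$x$ partner.
\end{itemize}
The second case needs a case check according to whether the interval is consistent below or above on $(i,i+1)$. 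I expect this construction to be the main obstacle, especially in the ``above'' case, where the interval modules starting at $x$ need not lie in $\fT$.

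For (2), we add contravariant finiteness to (1). Since $\fT$ is consistent below, every indecomposable in $\fT$ has integral left endpoint. Hence the indecomposables of $\fT$ form finitely many families, each indexed by a right endpoint ranging over a subinterval of some $(i,i+1)$ together with finitely many integral ones. For an indecomposable $X=k_{[a,b\rangle}$, a morphism $k_I\to X$ with $I\in\cM$ requires the endpoints of $I$ to interlace with those of $X$. Among the members of each family admitting nonzero maps to $X$, there is an extremal one (right endpoint as close as possible to $b$, possibly $b$ itself) through which all the others' maps factor. The direct sum of these finitely many extremal modules gives a right $\fT$-approximation of $X$. Extending additively covers all of $\rep(\bI^*_{m,n})$.

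For (3), let $\fT$ be any maximal rigid subcategory.
\begin{itemize}
\item \emph{$\Gen(\fT)\subseteq\fT^{\perp_1}$.} Since the category is hereditary, $\Ext^1(\fT,-)$ is right exact, so it vanishes on quotients of objects of $\fT$.
\item \emph{$\fT^{\perp_1}\subseteq\Gen(\fT)$.} Take $X\in\fT^{\perp_1}$ and let $t X$ be the trace of $\fT$ in $X$, the sum of the images of all maps from $\fT$. Since $X$ is finitely generated with finitely many interval summands, $tX$ is generated by finitely many objects of $\fT$, so $tX\in\Gen(\fT)$. Consider $Y=X/tX$. From $0\to tX\to X\to Y\to 0$ we get $\Ext^1(\fT,Y)=0$. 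Also $\Hom(\fT,Y)=0$, using a lifting argument via $\Ext^1(\fT,tX)=0$.
\item \emph{Conclude $Y=0$.} Suppose $Y\neq0$ and pick an indecomposable summand $Y'$. I would show $\Ext^1(Y',\fT)=0$ by exploiting the interval structure: a nonsplit extension of $Y'$ by $T\in\fT$ would produce a nonzero map from $T$ to $Y$, contradicting $\Hom(\fT,Y)=0$. Then $\fT\cup\{Y'\}$ is rigid, so $Y'\in\fT$ by maximality, contradicting $\Hom(\fT,Y)=0$. Hence $Y=0$ and $X\in\Gen(\fT)$.
\end{itemize}
The step $\Ext^1(Y',\fT)=0$ is specific to $\rep(\bI)$, since it fails in general. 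I would prove it by a direct computation with interval modules, using the explicit description of $\Ext^1(k_I,k_J)$.
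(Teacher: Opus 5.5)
Your plan for (2) is essentially the paper's. Parts (3) and (1), however, have genuine gaps, and they sit exactly where the content lies.

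In (3) there are two problems. First, $tX\in\Gen(\fT)$ does not follow from $X$ having finitely many interval summands. For $X=k_{|\alpha,\beta|}$ the trace is the union of images $k_{|\gamma,\beta|}$, and the infimum of these left endpoints $\gamma$ need not be attained. For example, extend $\{[0,0],[0,1]\}\cup\{[1/k,1]:k\geq1\}$ to a maximal compatible set $\cM$ on $[0,1]$ and put $\fT=\add\{k_I:I\in\cM\}$. Then $(0,1]\notin\cM$, since it is incompatible with $[0,0]$. So the trace of $\fT$ in $k_{(0,1]}$ is all of $k_{(0,1]}$, yet $k_{(0,1]}\notin\Gen(\fT)$. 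For $X\in\fT^{\perp_1}$ this finiteness is precisely what has to be proved.

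Second, your mechanism for $\Ext^1(Y',\fT)=0$ fails for the adjacent extensions of Lemma~\ref{lem:key}(1). If $\delta=\suc(\beta)$, then $\Ext^1(k_{|\alpha,\beta|},k_{|\delta,\epsilon|})\neq0$ while $\Hom(k_{|\delta,\epsilon|},k_{|\alpha,\beta|})=0$. In your setting this case cannot be avoided. If $0\neq tX=k_{|\gamma_0,\beta|}\neq X$ is generated by $k_{|\gamma_0,\delta_0|}\in\fT$, then $Y=k_{|\alpha,\pre(\gamma_0)|}$ and $\Ext^1(Y,k_{|\gamma_0,\delta_0|})\neq0$. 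So $\fT\cup\{Y\}$ is never rigid, and maximality cannot be applied to $Y$.

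The paper avoids both problems by pushing the right endpoint as far as possible inside $\fT^{\perp_1}$. For $k_{|\alpha,\beta|}\in\fT^{\perp_1}$ it sets $\gamma=\sup\{\gamma'\succeq\beta:k_{|\alpha,\gamma'|}\in\fT^{\perp_1}\}$. This supremum is attained: it could only fail at some $x^-$, and any $k_{|\mu,\nu|}\in\fT$ with $\Ext^1(k_{|\mu,\nu|},k_{|\alpha,x^-|})\neq0$ would already obstruct some earlier $\gamma_k$. Next, suppose $\Ext^1(k_{|\alpha,\gamma|},k_{|\delta,\epsilon|})\neq0$ for some $k_{|\delta,\epsilon|}\in\fT$, adjacent or not. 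Closure of $\fT^{\perp_1}$ under extensions and summands then puts $k_{|\alpha,\epsilon|}$ with $\epsilon\succ\gamma$ into $\fT^{\perp_1}$, contradicting the choice of $\gamma$. Hence $k_{|\alpha,\gamma|}$ lies in $\fT$ by maximality and maps onto $k_{|\alpha,\beta|}$; no trace argument is needed.

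In (1) you have not produced the (W3) sequence for $P_x$ with $x\in(i,i+1)$, and you have the difficulty backwards. If $\cM$ is an upper bound covering of $(i,i+1)$, then $k_{|x,\tau_i|}\in\fT$ is a quotient of $P_x=k_{[x,n]}$ with integral projective kernel $k_{|\suc(\tau_i),n|}$. It is the lower bound case in which nothing starting at $x$ lies in $\fT$. There the paper uses two sequences:
\begin{itemize}
\item $0\to k_{|\suc(\psi(\sigma_i)),n|}\to P_x\to k_{|x,\psi(\sigma_i)|}\to0$;
\item $0\to k_{|x,\psi(\sigma_i)|}\to k_{|\sigma_i,\psi(\sigma_i)|}\to k_{|\sigma_i,x^-|}\to0$, where $k_{|\sigma_i,\psi(\sigma_i)|}\in\fT$ is integral by Lemma~\ref{lem:inf_int_dual}.
\end{itemize}
The missing idea is the gluing step. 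The integral projective kernel has a (W3) sequence coming from the tilting module, and the quotient has one inside $\fT$. The Horseshoe Lemma~\ref{lem:horseshoe} combines them into $0\to P_x\to T_0\oplus T_0'\to T_1\oplus T_1'\to0$, using rigidity to split the induced sequence of cokernels. The projectives $k_{(x,n]}$, which your list omits, are handled analogously.

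In (2), the existence of the extremal member of each family needs proof. If $\inf\{\delta\succeq\beta:k_{|\kappa_j,\delta|}\in\fT\}$ were not attained, it would be some $i^+$ with $\kappa_j\preceq\beta\preceq i$. Lemma~\ref{lem:bound_int_dual}(1) then puts $k_{|\kappa_j,i|}$ into $\fT$, contradicting the choice of infimum.
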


Statements $(1)$ and $(2)$ suggest that among all hereditary abelian categories with enough projective objects, interval consistent and consistent below subcategories of $\rep(\bI_{m,n}^*)$ have better properties with respect to tilting theory.
Notice that in statement (3), it is not necessary to assume that the maximal rigid subcategory $\fT$ is interval consistent.  In light of the implication $(b)$,  this provides a stronger statement than the implication $(e)$ for subcategories of $\rep(\bR)$.

\vskip 5pt

Finally, we study the correspondence between certain torsion classes and support tilting subcategories, which serves as a continuous version of the Ingalls-Thomas bijection (Theorem \ref{thm:IT}). The support of a subcategory is defined as the union of the supports of its objects. A subcategory is {\bf integrally supported} if its support is a finite disjoint union of integral intervals. We say an integrally supported subcategory is maximal rigidity  consistency below  if it is maximal rigidity  consistency below on each support component. A torsion class is {\bf integrally generated} if it has the form $\Gen(\widehat T)$ for an integral representation $\widehat T$.

\begin{thm}[Theorem \ref{thm:supp_MRCB}, Theorem \ref{thm:IT_contin}]\label{thm:C}
There are bijections between the following sets.
\begin{enumerate}
\item $\{$isomorphism classes of support tilting $kQ_{m,n}^*$-modules$\}$;
\item $\{$integrally supported maximal rigid consistent below subcategories of $\rep(\bI_{m,n}^*)\}$;
\item $\{$integrally generated torsion classes of $\rep(\bI_{m,n}^*)\}$.
\end{enumerate}
\end{thm}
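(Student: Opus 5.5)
The plan is to route both bijections through the equivalence $\fD^*_{m,n}:kQ^*_{m,n}\modu\xrightarrow{\sim}\bI^*_{m,n}\Int$ of Lemma \ref{lem:eqv_A} and the classical Ingalls--Thomas bijection (Theorem \ref{thm:IT}). The map (1)$\to$(2) comes from the support decomposition. A support tilting module $T$ over $kQ^*_{m,n}$ is a tilting module over $kQ^*_{m,n}/\langle e\rangle$, where $e$ is the idempotent of the vertices outside $\supp T$. Since the quiver is a linearly oriented $A$-type quiver, this quotient is a product of path algebras of connected subquivers, each again of type $Q^*_{m',n'}$ for integral subintervals. The support of $\fD^*_{m,n}(T)$ is then a finite disjoint union of integral intervals $J_1,\dots,J_r$. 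On each $J_l$ the summand of $T$ is tilting over the corresponding $kQ^*_{m',n'}$. By Theorem \ref{thm:A}(2), it gives a maximal rigid subcategory of $J_l\Int$, and by Theorem \ref{thm:A}(3) this has a \emph{unique} maximal rigid consistent below completion $\fT_l$ in $\rep(J_l)$. I send $T$ to $\fT=\bigoplus_l\fT_l$.

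Before this, one compatibility check is needed: the open/half-open type of each component $J_l$ inside $\bI^*_{m,n}$ must match the vertex set of the corresponding full subquiver of $Q^*_{m,n}$. This is bookkeeping with the dictionary of Lemma \ref{lem:eqv_A}, using Proposition \ref{prop:homeo} to normalize. The inverse map (2)$\to$(1) intersects each component $\fT_l$ with $J_l\Int$, which by Theorem \ref{thm:A}(1) is maximal rigid there. Transporting back via $\fD$ and taking the direct sum gives a support tilting module. The two maps are mutually inverse by the uniqueness in Theorem \ref{thm:A}(3) and the bijectivity in Theorem \ref{thm:A}(2).

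For (1)$\leftrightarrow$(3), I first need that $\bI^*_{m,n}\Int$ is closed under quotients and extensions in $\rep(\bI^*_{m,n})$. A quotient of an integral interval module $k_{[a,b]}$, with $a,b$ integers and the appropriate open/closed ends, is an interval module $k_{[c,b]}$ where $c$ is the left endpoint of the kernel's complement. This $c$ need not be an integer, so I will not rely on that closure. Instead, I define the map $T\mapsto\Gen(\fD^*_{m,n}(T))$ computed in $\rep(\bI^*_{m,n})$, and show injectivity as follows. The integral objects of $\Gen(\widehat T)$ recover $\Gen_{\Int}(\widehat T)$. This holds because an epimorphism $\widehat T^{\,s}\to X$ with $X$ integral is a morphism in $\bI^*_{m,n}\Int$ (a full subcategory), and cokernels of integral maps remain integral, since $\fD$ is exact and $\bI^*_{m,n}\Int$ is abelian, hence closed under cokernels computed in $\rep$. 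Hence $\Gen(\widehat T)\cap\bI^*_{m,n}\Int=\fD(\Gen_{kQ}(T))$. This is a finitely generated torsion class of $kQ^*_{m,n}\modu$, and Theorem \ref{thm:IT} recovers $T$. Surjectivity is immediate from the definition of integrally generated, once we note that $\Gen(\widehat T)=\Gen(\widehat T')$ where $\widehat T'$ is the Ext-projective generator given by Ingalls--Thomas for $\Gen_{kQ}(\fD^{-1}\widehat T)$. Here $\Gen(\widehat T)$ is a torsion class in $\rep$, since $\rep(\bI)$ is hereditary and $\Gen$ of a rigid-free object is closed under extensions by the standard argument for hereditary categories.

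The main obstacle I expect is verifying directly that $\Gen(\widehat T)$ is extension-closed in $\rep(\bI^*_{m,n})$, which is needed for it to be a torsion class. I would first try to show $\Gen(\widehat T)=\Gen(\fT)$ for the consistent below completion $\fT$, and then invoke Theorem \ref{thm:B}(3), which gives $\Gen(\fT)=\fT^{\perp_1}$. Since $\fT^{\perp_1}$ is visibly extension-closed, this settles it. Doing this on the support (as a subcategory of $\rep$ of the support) gives a torsion class there, and extension closure in the ambient category then follows because the support is a union of components closed under extensions.
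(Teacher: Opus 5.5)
Your treatment of (1)$\leftrightarrow$(2) is the paper's proof of Theorem~\ref{thm:supp_MRCB}. For (3) your route is correct and genuinely different.

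\textbf{Comparison of routes.} The paper proves (2)$\leftrightarrow$(3) via $\fT\mapsto\Gen(\fT)$. It gets that $\Gen(\fT)$ is a torsion class from Theorem~\ref{thm:MRCB_tilt}(2): an MRCB subcategory is contravariantly finite, hence tilting, so Lemma~\ref{lem:weak_tilt_tor} applies. It gets integral generation from the roof argument of Lemma~\ref{lem:mrcb_is_int}.

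You go directly (1)$\to$(3) by $T\mapsto\Gen(\fD^*_{m,n}T)$. Extension-closure comes from the elementary fact that in a hereditary abelian category $\Gen(M)$ is extension-closed whenever $M$ is rigid. Read ``rigid'' for ``rigid-free''; $\fD^*_{m,n}T$ is rigid in $\rep(\bI^*_{m,n})$ because Lemma~\ref{lem:key}(1) matches the $\Ext^1$-criterion on $\bZ^\sharp$. You prove injectivity via $\Gen(\fD^*_{m,n}T)\cap\bI^*_{m,n}\Int=\fD^*_{m,n}(\Gen_{kQ}T)$ together with Ingalls--Thomas. Since $\Gen(\fT)=\Gen(\widehat\fT)$, your composite is the same map as the paper's.

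What your route buys:
\begin{itemize}
\item It is more elementary: it never uses contravariant finiteness or the BBOS machinery, so your fallback through Theorem~\ref{thm:B}(3) is superfluous.
\item It makes injectivity explicit. The paper's proof of Theorem~\ref{thm:IT_contin} leaves this implicit: it only constructs a preimage, and uniqueness of that preimage needs exactly your intersection identity.
\end{itemize}
What the paper's route buys: the sharper, Ingalls--Thomas-style statement that the bijection is literally $\fT\mapsto\Gen(\fT)$ on MRCB subcategories, together with the torsion pair $(\Gen\fT,\fT^{\perp})$. To recover that from your argument you would still need Lemma~\ref{lem:mrcb_is_int}.

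\textbf{Small justifications to repair.}
\begin{enumerate}
\item Closure of $\bI^*_{m,n}\Int$ under cokernels in $\rep(\bI^*_{m,n})$ does not follow from its being abelian. It does hold by a direct endpoint check: the cokernel of a nonzero map $\bk_{|\alpha,\beta|}\to\bk_{|\gamma,\delta|}$ is $\bk_{|\gamma,\pre(\alpha)|}$, and kernels and middle terms of extensions behave similarly. So the inclusion is exact and extension-closed, which is what your epimorphism comparison actually uses.
\item Before invoking Ingalls--Thomas in the surjectivity step, you must know that $\Gen_{kQ}(\fD^{-1}\widehat T)$ is a torsion class. This follows because its image under $\fD^*_{m,n}$ is $\cT\cap\bI^*_{m,n}\Int$, which is extension-closed since $\cT$ is.
\item $\rep(\bI^*_{m,n})$ is not noetherian, so closure under quotients and extensions does not by itself give a torsion pair. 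Supplement it with the trace of $\widehat T$, which exists because $\Hom$-spaces are finite dimensional; this produces the torsion-free part.
\end{enumerate}
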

 
\endgroup

 Moreover, finitely generated torsion classes in $\rep(\bI)$ become integrally generated after a suitable normalization. Theorem~C therefore relates finitely generated torsion classes in the continuous category to the classical support tilting theory of finite type $A$.

\subsection{Outline of the paper}
Section~2 introduces decorated real numbers and the interval notation used throughout the paper. Section~3 recalls the relevant categories
of persistence modules, defines integral and half-integral representations, and formulates the consistency conditions.
Section~4 describes rigidity through compatibility of intervals and classifies maximal rigid interval consistent subcategories, proving Theorem~A. Section~5 compares maximal rigid, weak tilting, and tilting subcategories in various settings and proves Theorem~B. Section~6 treats integral supports and integrally generated torsion classes and proves Theorem~C.

\section{Decorated real numbers and intervals on $\bR$}
Intervals on a totally ordered set $(\mathbb P,\prec)$ are  nonempty subsets of the forms:

\begin{itemize}
\item $[a,b]=\{x\in\mathbb P: a\preceq x\preceq b\},$
  $(a,b]=\{x\in\mathbb P: a\prec x\preceq b\},$
\item $[a,b)=\{x\in\mathbb P: a\preceq x \prec b\},$
  $(a,b)=\{x\in\mathbb P: a\prec x\prec b\},$
\item $(-\infty,a)=\{x\in\mathbb P:   x\prec a\},$
  $(-\infty,a]=\{x\in\mathbb P:   x\preceq a\},$
\item $(a,+\infty)=\{x\in\mathbb P: a\prec x \},$
  $[a,+\infty)=\{x\in\mathbb P: a\preceq x \},$
\end{itemize} 
for some $a,b\in \mathbb P$. Denote by $\Intv(\mathbb P)$ the set of all the intervals on $\mathbb P$.

Throughout the paper, we will use two notations for intervals on $\mathbb R$. One is the standard notation: $(a,b), (a,b], [a,b), [a,b]$. Another one using decorated real numbers $|\alpha,\beta|$. 

The set of {\bf decorated real numbers} is $\bR^{\pm}=\{x,x^+,x^-:x\in\bR\}$. The total order on $\bR^\pm$ is defined by (i) $x^\pm\prec y^\pm$ if $x<y$, (ii) $x^-\prec x\prec x^+$. Usually, we denote a decorated number by a Greek letter. 
Recall that an immediate successor of an element $\alpha$ is an element $\beta\succ\alpha$ and there is no element $\beta'$ such that $\beta\succ\beta'\succ\alpha$. Denote the immediate successor of $\alpha$ by $\suc(\alpha)$. Similarly, we can define the immediate predecessor $\pre(\alpha)$ of an element $\alpha$. As a result, a decorated  number $\alpha$ has an  immediate successor $\suc(\alpha)$ if and only if $\alpha=x$ or $x^-$ for some real number $x$ and $\alpha$ has an immediate predecessor $\pre(\alpha)$ if and only if $\alpha=x$ or $x^+$ for some real number $x$. The {\bf real value} of a decorated number $\alpha$ is a real number denoted by $\|\alpha\|$ and $\|\alpha\|=x$ if $\alpha=x$ or $x^+$ or $x^-$ for some real number $x$. We consider $\bR\subseteq\bR^\pm$ as a subset. The following proposition is an easy generalization of a well-known fact about real numbers.

\begin{prop}
    Any  subset of $\bR^\pm$ bounded below (resp. above) has a infimum (resp. supremum).
\end{prop}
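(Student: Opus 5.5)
The plan is to reduce the statement to the completeness of $\bR$ (the least upper bound property) by projecting onto real values and then pinning down the decoration by hand. I treat the supremum case only. The infimum case follows by the same argument with the order reversed, or equivalently by applying the order-reversing involution $x\mapsto -x$, $x^{\pm}\mapsto(-x)^{\mp}$ of $\bR^\pm$.

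Let $S\subseteq\bR^\pm$ be nonempty and bounded above by some $\gamma$. Put $\|S\|=\{\|\alpha\|:\alpha\in S\}\subseteq\bR$. Since $\alpha\preceq\beta$ implies $\|\alpha\|\le\|\beta\|$, the set $\|S\|$ is bounded above by $\|\gamma\|$. Hence $s=\sup\|S\|$ exists in $\bR$.

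The candidate supremum is determined by how $S$ meets the fibre $\{s^-,s,s^+\}$:
\begin{itemize}
\item if $s\notin\|S\|$, take $\sigma=s^-$;
\item otherwise, take $\sigma$ to be the largest element of the finite, totally ordered, nonempty set $S\cap\{s^-,s,s^+\}$.
\end{itemize}

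Next I check that $\sigma$ is an upper bound. Take $\alpha\in S$.
\begin{itemize}
\item If $\|\alpha\|<s$, then $\alpha\prec\sigma$ by rule (i) of the order.
\item If $\|\alpha\|=s$, then we are in the second case, and $\alpha\preceq\sigma$ by the choice of $\sigma$ as the maximum of the fibre.
\end{itemize}
The case $\|\alpha\|>s$ cannot occur.

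Then I check that $\sigma$ is least. Let $\beta$ be any upper bound of $S$.
\begin{itemize}
\item In the second case, $\sigma\in S$, so $\sigma\preceq\beta$ trivially.
\item In the first case, every $\alpha\in S$ has $\|\alpha\|<s$. If $\beta\prec s^-$, then $\|\beta\|<s$, since $s^-$ is the smallest element of its fibre. By the definition of $s$ as a real supremum, there is some $\alpha\in S$ with $\|\beta\|<\|\alpha\|$. This gives $\beta\prec\alpha$, a contradiction. Hence $s^-\preceq\beta$.
\end{itemize}

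The only mildly delicate point is the first case. There one must see that the supremum is $s^-$ and not $s$: the element $s^-$ lies strictly above every $x^{\pm}$ and every $x$ with $x<s$, and nothing smaller bounds $S$. Once the order axioms (i) and (ii) are used explicitly, everything else is routine.
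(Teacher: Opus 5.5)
Your proof is correct and follows the same route as the paper. Both arguments project to real values, take the real supremum (the paper treats the infimum), and fix the decoration by checking which of $s^-, s, s^+$ lie in the set, falling back to $s^-$ (dually $b^+$) when the real bound is not attained. You also carry out the verification that the paper leaves as a ``case by case test'', and you rightly state the nonemptiness hypothesis that the statement tacitly needs.
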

\begin{proof}
    Suppose $A\subseteq \mathbb R^\pm$ is bounded below.   Then the set $B=\{\|\alpha\|: \alpha \in A\}$ is a bounded subset of $\mathbb R$. Hence $B$ has an infimum $b$. 
    Then a case by case test leads to the consequence:
    $$
   \inf A=\begin{cases}
    b^- & b^-\in A\\
    b  & b\in A \text{ and } b^-\notin A\\
    b^+ & \text{else}.
    \end{cases}
    $$
 The statement for subsets bounded above can be proved similarly.
\end{proof}

For the sake of defining unbounded intervals, we also need to add a minimum $-\infty$ and maximum $+\infty$ to the totally ordered set $\bR$ and call $\bR\cup\{\pm\infty\}$ the set of extended real numbers. Similarly, by adding a minimum and maximum to the decorated real numbers, we obtain the set of extended real numbers $\bR^{\pm}\cup\{\pm\infty\}$. By convention, define the real values $\|-\infty\|=-\infty$ and $\|+\infty\|=+\infty$.

\vskip5pt
{\bf Decorated  notation of intervals on $\bR$.} For $\alpha,\beta\in\bR^{\pm}\cup\{\pm\infty\}$ such that $+\infty\neq\alpha\preceq\beta\neq-\infty$, denote by  $|\alpha,\beta|:=\{\gamma\in\bR:\alpha\preceq\gamma\preceq\beta\}$ an interval on $\mathbb R$. We say an interval $|\alpha,\beta|$ is {\bf admissible} if $\alpha\neq x^-$ and $\beta\neq y^+$ for any real numbers $x,y$.  
 As shown in the table below, there is a one-to-one correspondence between the admissible intervals in decorated numbers notation and standard interval notation. 
\begin{table}[h] 
 \begin{center}
 \tiny{
 \begin{tabular}{|p{1.3cm}|c|c|c|c|c|c|c|c|c|}
 \hline
 Decorated notation& $|x,y|$ & $|x^+,y|$ & $|x,y^-|$ & $|x^+,y^-|$ & $|-\infty,y|$ &$|-\infty,y^-|$&$|x,+\infty|$& $|x^+,+\infty|$ & $|-\infty,+\infty|$\\  
 \hline
 Standard notation& $[x,y]$ & $(x,y]$ & $[x,y)$& $(x,y)$& $(-\infty,y]$& $(-\infty,y)$& $[x,+\infty)$& $(x,+\infty)$& $(-\infty,\infty)$\\
 \hline
 \end{tabular}}
 \caption{The correspondence between decorated and standard notations}
 \end{center}
\end{table}
In the following, without specific mention, we assume all the intervals are admissible. One should be comfortable to apply the decorated real numbers notation whenever it is convenient. 

 \section{Persistence modules}

 \subsection{Representations over totally ordered sets}
Let $\bP$ be a totally ordered set and $k$ a fixed field. A persistence $\bP$-module (representation) $M$ is a functor from $\bP$ to $k$-vector spaces. Precisely, $M(x)$ is a vector space for each $x\in \bP$ and $\rho_{xy}:M(x)\to M(y)$ is a linear map for each pair $x\leq y$, satisfying $\rho_{yz}\rho_{xy}=\rho_{xz}$ for $x\leq y\leq z$ and $\rho_{xx}=1_{M(x)}$ for each $x\in \bP$. A morphism $f:M\to N$ between persistence modules consists of pointwise linear maps $M(x)\to N(x)$ such that the following diagram commutes: 
$$
\xymatrix{M(x)\ar[r]^{\rho_{xy}}\ar[d]^{f_x} &M(y)\ar[d]^{f_y} \\ N(x)\ar[r]^{\rho'_{xy}} &N(y).}
$$ 
Denote by $\Rep(\bP)$ the category of  persistence $\bP$-modules.

The support of $M$ is the set $x$ such that $M(x)\neq 0$, denoted by $\supp(M)$. 
Persistence modules supported on intervals are called ``interval modules''.
For a nonempty interval $I\in\Intv(\mathbb P)$, denote by  $\bk_I$ the {\bf interval module} on $I$, which is precisely defined by
\begin{align*}
    \bk_{I}(x) &= \begin{cases}
        k & x\in I\\
        0 & \text{otherwise}
    \end{cases}
\end{align*}
and  $\rho_{xy}  =  1$  for $x\leq y, x,y\in I $. 


A persistence $\bP$-module $M$ is called pointwise finite dimensional, if $\dim_kM_x<\infty$ for each $x\in \bP$. Denote by $\bP\pwf$ the category of pointwise finite dimensional persistence $\bP$-modules. It turns out that $\bP\pwf$ is an  abelian category. 
  The following theorem shows that $\bP\pwf$ is a Krull-Schmidt-Remark-Azumaya category.

\begin{thm}\cite[Theorem 1.1]{CB}\label{thm:barcode ]}
  Let $\bP$ be a totally ordered set.  Then any pointwise finite-dimensional  $\bP$-module decomposes uniquely up to isomorphism into a direct sum of interval modules. 
\end{thm}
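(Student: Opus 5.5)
The plan follows Crawley-Boevey's approach for a pointwise finite-dimensional module $M$ over a totally ordered set $\bP$. We build interval submodules out of "cuts" and then use a functorial filtration, in the spirit of Ringel's functorial filtrations for string algebras, to split off interval summands. Uniqueness should follow afterwards from local endomorphism rings together with the Krull--Schmidt--Remak--Azumaya theorem.

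\emph{Step 1 (cuts and subquotients).} For $x\in\bP$, a cut $c=(c^-,c^+)$ is a partition of $\bP$ into a down-set $c^-$ and an up-set $c^+$. For $x\in c^+$ we set
\[ \Ima_{c}^-=\bigcup_{y\in c^-,\,y\le x}\Ima\rho_{yx}, \qquad \Ima_{c}^+=\bigcap_{y\in c^-,\,y\le x}\Ima\rho_{yx}, \]
and dually we define kernel subspaces $\ker^\pm_{d}(x)$ for cuts $d$ above $x$. Since $M_x$ is finite dimensional, each of these chains of subspaces stabilizes, so the unions and intersections are attained. For an interval $I$ with lower cut $c$ and upper cut $d$ and $x\in I$, we define
\[ V^\pm_{I}(x)=\Ima^-_c+\ker^\pm_d\cap\Ima^+_c, \qquad F_I(x)=V^+_I(x)/V^-_I(x). \]
Here we use a lattice (Zassenhaus/butterfly) argument to check that for $x\le y$ in $I$ the map $\rho_{xy}$ induces an isomorphism $F_I(x)\cong F_I(y)$. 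Because of this, the dimension $m_I:=\dim F_I(x)$ does not depend on $x$.

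\emph{Step 2 (sections).} Fix $x\in I$ and choose a complement $W\subseteq V^+_I(x)$ of $V^-_I(x)$. We transport $W$ along the structure maps to every point of $I$; backwards, this is done by lifting through the images $\Ima\rho_{yx}$, which works because $W\subseteq \Ima^+_c$. The result is a submodule $C_I\cong k_I^{m_I}$ of $M$, and it is injective on $W$ because $W\cap\ker^-_d=0$. We then show that the family $\{C_I\}_I$ is independent and that its sum is all of $M$. At each point $x$, the two filtrations (by images and by kernels) are finite because $M_x$ is finite dimensional, so $M_x=\bigoplus_I C_I(x)$. This is the main obstacle: the filtrations are indexed by infinitely many cuts, and we must argue that only finitely many intervals $I$ contain $x$ with $m_I\ne0$. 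We will try the finite-dimensional dimension count $\sum_{I\ni x}m_I=\dim M_x$, proved by refining the pair of filtrations of $M_x$ as in the Schreier refinement theorem.

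\emph{Step 3 (direct sum and uniqueness).} Pointwise decomposition yields $M\cong\bigoplus_I k_I^{m_I}$ as modules, because the $C_I$ are submodules. For uniqueness, note that $\operatorname{End}(k_I)=k$ is local, so Azumaya's theorem gives uniqueness of the decomposition. Alternatively, and more simply, the multiplicities $m_I=\dim F_I(x)$ are defined intrinsically from $M$, which already proves uniqueness.
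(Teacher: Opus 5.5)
The paper gives no proof of this statement; it quotes Crawley-Boevey, and your outline reproduces his functorial-filtration argument. The gap is in Step 2, which is where that argument does its real work.

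First, an arbitrary complement $W$ of $V^-_I(x)$ in $V^+_I(x)=\Ima^-_c+\ker^+_d\cap\Ima^+_c$ need not lie in $\ker^+_d$. Then its transports do not vanish beyond $I$, and extending by zero gives no submodule. For example, take $M=k_{[0,2]}\oplus k_{[1,1]}$ and $I=[1,1]$, with $e,f$ the generators of the two summands at $1$. Then $V^+_I(1)=M(1)$, $V^-_I(1)=\langle e\rangle$, and $W=\langle e+f\rangle$ is a complement, but $\rho_{1z}(e+f)\neq 0$ for $1<z\le 2$. You must take $W\subseteq\ker^+_d\cap\Ima^+_c$, which is possible since $V^+_I(x)=V^-_I(x)+\ker^+_d\cap\Ima^+_c$.

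Second, and more seriously, lifting $W$ through $\Ima\rho_{yx}$ separately at each $y<x$ does not produce a submodule. The lifts must lie in $\ker^+_d(y)\cap\Ima^+_c(y)$. This requires $\rho_{yx}(\ker^+_d(y)\cap\Ima^+_c(y))=\ker^+_d(x)\cap\Ima^+_c(x)$, which uses stabilization of the image chains. The lifts must also be coherent: $\rho_{y'y}(C_I(y'))=C_I(y)$ for all $y'\le y\le x$ in $I$. Independent choices break this even when they lie in the right subspaces. For $M=k_{(0,1]}\oplus k_{(0,1/2]}$ and $I=(0,1]$, with $e_y,f_y$ the generators of the two summands at $y$, the lifts $\langle e_y+y f_y\rangle$ of $M(1)$, $0<y\le 1/2$, are not preserved by the structure maps.

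If $I$ has a minimum you can lift once there and push forward. Otherwise you need an actual argument:
\begin{itemize}
\item recursion along a countable coinitial sequence of $I$ when one exists, as for every interval of $\bR$;
\item for a general totally ordered $\bP$, where coinitiality may be uncountable (e.g.\ $\bP=\omega_1^{\mathrm{op}}$), a Zorn's lemma argument on coherent partial lifts defined on up-sets of $\{y\in I: y\le x\}$. Its extension step uses that a chain of nonempty cosets in the finite-dimensional space $M(s)$ has nonempty intersection.
\end{itemize}
This coherent construction of $C_I$ is the crux of the proof, and your proposal does not address it.

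By contrast, the point you flag as the main obstacle is immediate. The families $\{\Ima\rho_{yx}\}_{y\le x}$ and $\{\ker\rho_{xz}\}_{z\ge x}$ are chains in the finite-dimensional $M(x)$, hence have finitely many members. So only finitely many $I\ni x$ have $V^-_I(x)\neq V^+_I(x)$. Apart from trivial ones, the sections $(V^-_I(x),V^+_I(x))$ are exactly the successive steps of the single finite filtration obtained by refining the image chain by the kernel chain. Cuts with $c^-=\emptyset$ or $d^+=\emptyset$ must be allowed so that both chains run from $0$ to $M(x)$. Thus $M(x)=\bigoplus_{I\ni x}C_I(x)$ holds as soon as each $C_I(x)$ is a complement of $V^-_I(x)$ in $V^+_I(x)$; no separate Schreier-type count is needed.

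Three smaller points:
\begin{itemize}
\item $\Ima^+_c$ must be the intersection over $y\in c^+$ with $y\le x$. As written, over $y\in c^-$, it is contained in $\Ima^-_c$ whenever $c^-\neq\emptyset$.
\item Your cuts produce arbitrary nonempty convex subsets. For general $\bP$ (e.g.\ $\bP=\mathbb{Q}$) these need not have the endpoint forms listed in Section 2, so the statement must be read with that meaning of interval; for $\bP=\bR$ there is no difference.
\item The uniqueness argument via $m_I=\dim F_I(x)$ is fine once you check that $F_I$ is additive and that $F_I(k_J)$ is $k$ for $J=I$ and $0$ otherwise.
\end{itemize}
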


We are mostly interested in persistence modules of $\bR$. Indecomposable projective objects in $\bR\pwf$ are $k_{[a,+\infty)}$, $k_{(a,+\infty)}$ or $k_{(-\infty,+\infty)}$. A persistence module $M$ is called {\bf finitely generated} if it is a quotient of a finite direct sum of indecomposable projective objects in $\bR\pwf$. We define $\rep(\bR)$ as the full subcategory of $\bR\pwf$ whose objects are
finitely generated persistence $\bR$-modules. It  turns out that $\rep(\bR)$ is an Krull-Schmidt abelian subcategory of $\bR\pwf$ consisting of objects which are finite direct sums of interval modules \cite[Lemma 3.1.4-3.1.5]{IRT}. For an interval $\mathbb I\subseteq\bR$, denote by $\rep(\mathbb I)$ the full subcategory of finitely generated modules whose supports are contained in $\mathbb I$, which are also Krull-Schmidt abelian categories. 

\begin{rmk}
Both $\bR\pwf$ and $\rep(\bR)$ are hereditary. The category $\bR\pwf$ does not have enough projective objects. For example, $M=\bigoplus\limits_{x\in[0,1]}k_{[x,x]}$ is pointwise finite dimensional, but it does not admit a projective cover in $\bR\pwf$. In contrast, each object in $\rep(\bR)$ has a projective cover, due to the finiteness of direct sums. 
\end{rmk}

Recall that an order preserving (resp. reversing) homeomorphism $f:\bR\to \bR$ is a continuous increasing function. Denoted by $\Homeo_+(\bR)$ the set of order preserving  homeomorphisms. The following result is well-known.

\begin{prop}\label{prop:homeo}
Let $\mathbb I,\mathbb J$ be intervals on $\bR$.  If there is a map $f\in\Homeo_+(\bR)$ such that $f(\bI)=\mathbb J$, then there is an equivalence  of abelian categories $F:\rep(\mathbb I)\cong \rep(\mathbb J)$.
\end{prop}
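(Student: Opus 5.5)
The plan is to construct the functor $F$ as precomposition with the inverse homeomorphism. Since $f\in\Homeo_+(\bR)$ is a continuous strictly increasing bijection, so is $g=f^{-1}$, and $g$ restricts to an order isomorphism $\mathbb J\to\mathbb I$. For a persistence module $M$ with $\supp(M)\subseteq\mathbb I$, I would define $F(M)=M\circ f^{-1}$. Concretely, $F(M)(y)=M(f^{-1}(y))$, with structure maps $\rho_{f^{-1}(y)f^{-1}(y')}$ for $y\le y'$, and $F$ acts on morphisms by $(F\phi)_y=\phi_{f^{-1}(y)}$. The candidate quasi-inverse is $G(N)=N\circ f$. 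Since $f$ and $f^{-1}$ are monotone functors $\bR\to\bR$, both $F$ and $G$ are well-defined functors $\Rep(\bR)\to\Rep(\bR)$. They satisfy $GF=\mathrm{id}$ and $FG=\mathrm{id}$ strictly, so they are isomorphisms of categories.

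Next I would check that they preserve the relevant subcategories.
\begin{itemize}
\item \emph{Supports.} We have $\supp F(M)=f(\supp M)\subseteq f(\mathbb I)=\mathbb J$.
\item \emph{Pointwise finiteness.} This is clear, since pointwise dimensions are unchanged.
\item \emph{Interval modules.} The key observation is $F(k_I)\cong k_{f(I)}$. Because $f$ is an increasing homeomorphism, $f(I)$ is again an interval, of the same type: $f$ maps $[a,b]$ to $[f(a),f(b)]$, $(a,b]$ to $(f(a),f(b)]$, and so on, and unbounded rays go to unbounded rays since $f$ is surjective.
\item \emph{Finite generation.} By \cite[Lemma 3.1.4-3.1.5]{IRT}, an object of $\rep(\mathbb I)$ is exactly a finite direct sum of interval modules supported in $\mathbb I$. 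Since $F$ is additive, $F$ sends $\rep(\mathbb I)$ into $\rep(\mathbb J)$, and symmetrically $G$ sends $\rep(\mathbb J)$ into $\rep(\mathbb I)$.
\end{itemize}
Hence $F$ restricts to an isomorphism (in particular an equivalence) $\rep(\mathbb I)\to\rep(\mathbb J)$.

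Finally, I would verify exactness, so that this is an equivalence of abelian categories. Kernels, cokernels and direct sums in $\rep(\bR)\subseteq\bR\pwf$ are computed pointwise. Since $F$ only reindexes points, it commutes with these constructions and is therefore exact. Alternatively, any equivalence between abelian categories is automatically exact. The quasi-inverse $G$ is exact for the same reason.

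The only point requiring care, and the one I regard as the main obstacle, is the claim that $f$ preserves interval types, including open versus closed endpoints. This uses that $f$ is a strictly increasing bijection, so strict and non-strict inequalities are preserved. It also uses continuity together with surjectivity, to rule out the possibility that an unbounded ray gets sent to a bounded one. Once this is in place, everything else is formal.
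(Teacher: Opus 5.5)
Your proposal is correct and is the same argument as the paper's: transport modules along the homeomorphism by reindexing. The paper dismisses this as ``easy to check'', while you verify supports, interval types, finite generation via the interval-decomposition characterization, and pointwise exactness; note also that the paper's formula $FM(x)=M(f(x))$ literally sends $\rep(\mathbb J)$ to $\rep(\mathbb I)$, whereas your $M\circ f^{-1}$ goes in the stated direction, so the two are just the inverse pair of equivalences.
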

\begin{proof}
   Let $M=(M(x), \rho_{xy})$ be a persistence $\bI$-module. It is easy to check that $FM=(FM(x),\rho'_{xy})$, given by $FM(x)=M(f(x)$ and $\rho'_{xy}=\rho_{f(x)f(y)}$ defines an equivalence.
\end{proof}

Therefore,  the category of finitely generated modules over any interval on $\bR$ is equivalent to one of the following:
$
(1) \rep([a,b]) , (2) \rep([a,b)), (3) \rep((a,b]), (4) \rep((a,b)),
$
for some real numbers $a\leq b$.

\subsection{Integral and half-integral representations}
In practice, we can only record the value of persistence modules on a certain set of critical points instead of the whole set $\bP$. Restricting to interval modules born or dead at those points gives rise to the concept of (half-)integral representations, which also allow us to apply tools from finite quiver representations conveniently.

\begin{defn} 
Let $\bI=|\alpha,\beta|$ be a (possibly unbounded) interval on $\mathbb R$. A {\bf critical vector} $\bm\lambda=(\lambda_0, \cdots, \lambda_n)$ on $\bI$ consists of a finite sequence of extended decorated numbers $\alpha=\lambda_0\prec\lambda_1\prec\cdots\prec\lambda_n=\beta$, where $\lambda_1,\cdots,\lambda_{n-1}\in\bR$ are   distinct from $\|\lambda_0\|$ and $\|\lambda_n\|$. By convention, the critical vector for intervals $\bI=[x,x]$ is $\bm\lambda=(x)$. 

Given a critical vector $\bm\lambda$, an admissible interval $|\mu,\nu|\subseteq\bI$ is called {\bf integral} if both $\|\mu\|$ and $\|\nu\|\in\{\|\lambda_i\|:i=0,1,\cdots,n\}$ and {\bf half-integral} if either $\|\mu\|$ or $\|\nu\|\in\{\|\lambda_i\|:i=0,1,\cdots,n\}$

A {\bf finitely generated (half-)integral representation} over $(\bI,\bm\lambda)$ is a finitely generated representation $M\cong\bigoplus \bk_{I_i}$, such that $I_i$'s are (half-)integral intervals.  
 Denote by $(\bI,\bm\lambda)\Int$ the full subcategory of $\rep(\bI)$ consisting of integral representations.
\end{defn}

\begin{exm}
 Let $\bI=(0,1]$ and $\bm\lambda=(\lambda_0,\lambda_1)=(0^+,1)$. 
There are $3$ indecomposable integral representations in $(\bI,\bm\lambda)\Int${\rm:} $\bk_{(0,1]}$, $\bk_{(0,1)}$ and $\bk_{[1,1]}$. A complete set of non-isomorphic indecmoposable half-integral representations is 
$$
\{\bk_{(0,x)}, \bk_{(0,x]}: 0<x\leq 1\}\cup\{\bk_{(y,1)},\bk_{[y,1)}:0<y<1\}\cup\{\bk_{[z,1]},\bk_{(z,1]}:0<z<1\}\cup\{k_{[1,1]}\}.
$$

\end{exm}

In light of Proposition \ref{prop:homeo}, we have the following observation.
\begin{prop}\label{prop:homeo_int}
    Let $\bI=|\alpha,\beta|$ be an interval on $\bR$ with a critical vector $\bm\lambda=(\lambda_0,\lambda_1,\cdots,\lambda_n)$. 
   If there is a map $f\in\Homeo_+(\bR)$  such that $f(\bI)=\mathbb J=|\gamma,\delta|$ and $\bm\lambda'=(\gamma, f(\lambda_1), \cdots,$ $ f(\lambda_{n-1}),\delta)$,
    then there is an equivalence of abelian categories $(\bI,\bm\lambda)\Int\cong(\mathbb J,\bm\lambda')\Int$.
\end{prop}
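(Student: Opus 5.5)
The plan is to take the functor $F:\rep(\bI)\to\rep(\mathbb J)$ built in the proof of Proposition \ref{prop:homeo} and show that it, or its quasi-inverse, restricts to an equivalence $(\bI,\bm\lambda)\Int\to(\mathbb J,\bm\lambda')\Int$. Since both integral categories are full subcategories, full faithfulness is inherited from $F$. What remains is that the functor sends integral representations to integral ones, and that every integral representation of $\mathbb J$ arises this way up to isomorphism. The restriction is additive, and exactness is inherited from $F$, because kernels and cokernels are computed pointwise; so it is an equivalence of abelian categories once it is one of categories.

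First I fix the direction. The functor $G$ with $G(M)(y)=M(f^{-1}(y))$ and structure maps $\rho_{f^{-1}(y)f^{-1}(y')}$ sends $\bI$-modules to $\mathbb J$-modules, because $f$ is order preserving. I would use $G$, which is the quasi-inverse to the $F$ of Proposition \ref{prop:homeo}. Then I extend $f$ to the extended decorated numbers by $f(x^\pm)=f(x)^\pm$ and $f(\pm\infty)=\pm\infty$. This extension is order preserving, satisfies $\|f(\mu)\|=f(\|\mu\|)$, and preserves admissibility. Because $f$ is a strictly increasing bijection of $\bR$, it carries $|\mu,\nu|$ to $|f(\mu),f(\nu)|$ and preserves the type of each endpoint (open or closed, bounded or unbounded). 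In particular $G(\bk_{|\mu,\nu|})\cong \bk_{|f(\mu),f(\nu)|}$, since both are $k$ on that set with identity structure maps.

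Next comes the key check on endpoints. Write $\gamma=f(\lambda_0)$ and $\delta=f(\lambda_n)$; this holds because $f(\bI)=\mathbb J$ forces $f(\alpha)=\gamma$ and $f(\beta)=\delta$ as decorated endpoints. Then the real values of the entries of $\bm\lambda'$ are exactly $f(\|\lambda_i\|)$ for $i=0,\dots,n$. Hence $|\mu,\nu|\subseteq\bI$ is integral for $\bm\lambda$ if and only if $|f(\mu),f(\nu)|\subseteq\mathbb J$ is integral for $\bm\lambda'$. Also, $\bm\lambda'$ is a critical vector, since $f$ is injective and order preserving.

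Finally, I use that $G$ commutes with finite direct sums. Combining this with the decomposition of objects in $\rep(\bI)$ into finitely many interval modules (Theorem \ref{thm:barcode ]}), $G$ maps $(\bI,\bm\lambda)\Int$ into $(\mathbb J,\bm\lambda')\Int$. Running the same argument with $f^{-1}$ shows that $F$ maps back, so every integral $\mathbb J$-representation is isomorphic to $G$ of an integral one, which is essential surjectivity.

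The only step needing care is the endpoint bookkeeping. I must confirm that the homeomorphism respects decorations at the boundary. A degenerate case also needs checking: when $\bI=[x,x]$ with $\bm\lambda=(x)$, the interval $\mathbb J$ is a point $[f(x),f(x)]$, and the statement is immediate there.
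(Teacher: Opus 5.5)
Your proposal is correct and follows the paper's approach: the paper gives no separate proof and treats the statement as the restriction of the equivalence in Proposition \ref{prop:homeo} to integral representations, which is what you do. Your switch to $G(M)(y)=M(f^{-1}(y))$, needed because the paper's formula $FM(x)=M(f(x))$ actually lands in $\rep(f^{-1}(\bI))$, and your explicit check that the decorated extension of $f$ sends integral intervals for $\bm\lambda$ exactly onto integral intervals for $\bm\lambda'$ supply details the paper leaves implicit.
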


One can always normalize the interval $\bI$ and the critical vector $\bm\lambda$ using some order-preserving homeomorphism $f$, such that $f(\bI)=|m^{(+)}, n^{(-)}|$ for some integers $m, n$, and $\lambda_i=m+i$ for $i=1,\cdots,\mbox{$n-m-1$}$. It leads to our definition of continuous quiver of type $\mathbb A$.

\begin{defn}\label{def:RAn}
 For $n\geq 1$, denote by $\bI_{m,n}=[m,n]$, $\bI_{m,n}^{+}=(m,n]$, $\bI_{m,n}^-=[m,n)$, $\bI_{m,n}^\pm=(m,n)$.
Each $\bI_{m,n}^*$ ($*=\emptyset,+,-,\pm$), is associated with a critical vector $\bm\lambda^*=(\lambda_0,\lambda_1,\cdots,\lambda_{n-1},\lambda_n)$, where $\lambda_i=m+i$, for $1\leq i\leq n-m-1$. The totally ordered set $\bI_{m,n}^*$ together with the critical vector $\bm\lambda^*$ is called a {\bf continuous quiver of type $\bA$}. Simply denote 
the category of integral representations of $\bI_{m,n}^*$ by $\bI_{m,n}^*\Int$.
\end{defn}

\begin{rmk}\ \\
{\rm(1)} The category of finitely generated modules over $\bI_{m,n}^*$ is nothing but finitely generated modules over the corresponding intervals. The critical vector only plays a role, when we consider the (half-)integral representations.\\
{\rm(2)} As a consequence of Proposition \ref{prop:homeo_int}, for any interval $\bI$ and critical vector $\bm\lambda$, the category $(\bI,\bm\lambda)\Int$ is equivalent to the category of  integral representations over a continuous quiver of type $\bA$.\\
{\rm(3)}  In general, it is possible to consider an infinite set of critical points, but with some technical obstacles of dealing with accumulation points, which is slightly off-topic. In this paper, we aim to relate continuous quivers with finite quivers. Therefore, only  finite sets of critical points are taken into account.

\end{rmk}
 
The important link between integral representations and modules over path algebras is due to the following observation.
Denote by $\mathbb Z^\sharp$ the totally ordered set of extended integers:  $$\cdots<-1<(-1)^\sharp<0<0^\sharp<1<1^\sharp<\cdots.$$
Then there is a bijection $\theta:\Intv(\mathbb Z^\sharp)\to \Intv_{\mathbb Z}(\bR)$ given below:
\begin{align*}
   \theta([i,j])=[i,j], \ \ 
   \theta([i^\sharp,j])= (i,j], \ \ 
   \theta([i,(j-1)^\sharp])= [i,j), \ \ 
   \theta([i^\sharp,(j-1)^\sharp])=(i,j).
\end{align*}
 
Now for any integral interval $\bI_{m,n}^*$ on $\bR$, $\theta^{-1}(\bI_{m,n}^*)$ is an interval on $\mathbb Z^\sharp$, whose incidence algebra is a path algebra of a linear type $A$ quiver, denoted by $Q_{m,n}^*$. Precisely,
\begin{align}\label{eqn:Q_n}
&Q_{m,n}:m\to m^\sharp \to m+1\to (m+1)^\sharp \to\cdots \to (n-1)^\sharp\to n, \nonumber\\
&Q_{m,n}^+:  m^\sharp \to m+1\to (m+1)^\sharp\to\cdots \to (n-1)^\sharp\to n,  \nonumber\\
&Q_{m,n}^-:m\to m^\sharp \to m+1\to (m+1)^\sharp \to\cdots \to (n-1)^\sharp,  \nonumber \\
&Q_{m,n}^{\pm}:  m^\sharp \to m+1\to (m+1)^\sharp \to\cdots \to (n-1)^\sharp.
\end{align}
Indecomposable $kQ_{m,n}^*$-modules can also be viewed as interval modules over the poset $\mathbb Z^\sharp$. That is for each interval $I$, denote by $k_I$ the indecomposable $kQ_{m,n}^*$-module  supported on the vertices belongs to $I$. 
From the bijection $\theta$, the following result follows immediately.
 \begin{lem}\label{lem:eqv_A}
 There are equivalences of abelian categories $\fD_{m,n}^*: kQ_{m,n}^*\modu\to\bI_{m,n}^*\Int$, given by $\fD_{m,n}^*(k_I)=k_{\theta(I)}$.
\end{lem}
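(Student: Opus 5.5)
The plan is to build $\fD^*_{m,n}$ explicitly at the level of objects and morphisms, and then show it is fully faithful, essentially surjective and exact. Since both categories are Krull--Schmidt with indecomposables given by interval modules, this splits into two pieces. First, $\theta$ must give a bijection on indecomposables. Second, $\theta$ must be compatible with the Hom spaces between them and with composition.

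\emph{Objects.} Every indecomposable $kQ^*_{m,n}$-module is $k_I$ for an interval $I$ of the finite linear quiver $\theta^{-1}(\bI^*_{m,n})\subseteq\mathbb Z^\sharp$. This is Gabriel's theorem for linearly oriented type $A$, or Theorem \ref{thm:barcode ]} for the finite poset. The indecomposables of $\bI^*_{m,n}\Int$ are by definition the $k_J$ with $J$ an integral admissible subinterval of $\bI^*_{m,n}$. I will check that $\theta$ restricts to a bijection between these two index sets. The argument reads off each endpoint: a closed endpoint at an integer $i$ corresponds to the vertex $i$, while an open left endpoint $i^+$ or open right endpoint $(i+1)^-$ corresponds to the vertex $i^\sharp$. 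The endpoint constraints from $*$ match exactly the vertices removed in \eqref{eqn:Q_n}.

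\emph{Morphisms.} This is the key step. On both sides, for a totally ordered index set,
\[
\Hom(k_I,k_J)\cong\begin{cases} k & \text{if } \inf J\preceq\inf I\preceq \sup J\preceq \sup I,\\ 0&\text{otherwise},\end{cases}
\]
where the endpoints are taken in the appropriate decorated order on $\bR^\pm$ and in the order of $\mathbb Z^\sharp$ respectively. I will verify that $\theta$, viewed through the endpoint dictionary above, is an order isomorphism between the set of possible endpoints. This means sending a left endpoint $i\mapsto i$ and $i^\sharp\mapsto i^+$, and a right endpoint $j\mapsto j$ and $j^\sharp\mapsto (j+1)^-$. Applied to left and right endpoints separately, this shows the Hom condition is preserved. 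A nonzero morphism is the identity on the overlap in both settings, so composition of generators is preserved: $\mathrm{id}\circ \mathrm{id}$ is nonzero exactly when the composite support condition holds, and this holds on both sides simultaneously.

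Extending additively gives a fully faithful, essentially surjective functor. For this I would use the convention that $\fD$ on a general module is defined pointwise: set $\fD(M)(x)=M(v)$, where $v$ is the vertex whose "cell" ($\{i\}$ or $(i,i+1)$) contains $x$, and set the structure maps to be the composite quiver maps. This definition makes functoriality and exactness automatic, because kernels and cokernels are computed pointwise and it sends $k_I$ to $k_{\theta(I)}$. The remaining point is that $\bI^*_{m,n}\Int$ is closed under kernels and cokernels in $\rep(\bI^*_{m,n})$, so that it is an abelian subcategory. This follows because the essential image of the exact pointwise functor consists precisely of modules that are constant on each cell, and that property is stable under kernels and cokernels.

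I expect the main obstacle to be the bookkeeping of open versus closed endpoints in the Hom computation. In particular, I need to confirm that the pointwise definition of $\fD$ really yields $k_{\theta(I)}$ in the cases where $I$ ends at a $\sharp$-vertex.
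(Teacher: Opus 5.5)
Your proposal is correct and follows the same route as the paper, which only remarks that the lemma ``follows immediately'' from the bijection $\theta$. Your pointwise definition $\fD(M)=M\circ\pi$, with $\pi$ the order-preserving map sending each cell $\{i\}$ or $(i,i+1)$ to its vertex $i$ or $i^\sharp$, makes this precise, and it settles your worry about $\sharp$-endpoints because $\pi^{-1}(I)=\theta(I)$.

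The only step you state too quickly is the Hom comparison. The middle inequality $\inf I\preceq\sup J$ compares a left endpoint with a right one. So you also need the cross-condition that $a\le b$ in $\mathbb Z^\sharp$ exactly when the left-endpoint image of $a$ is $\preceq$ the right-endpoint image of $b$ in $\bR^{\pm}$; this does hold. Alternatively, you can skip the Hom count altogether: the structure maps of $M\circ\pi$ inside a cell are identities, so every morphism $M\circ\pi\to N\circ\pi$ is constant on cells and comes from a unique morphism $M\to N$.
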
  
  
\subsection{Consistency conditions}
In next section, we will discuss maximal rigid subcategories in $\rep(\bR)$. A typical class of maximal rigid subcategories satisfies the so-called consistency conditions, which we will define in this subsection.

 \begin{defn}\label{def:int_consist}

Let $\bI$ be an interval on $\bR$ with a   critical vector $\bm\lambda=(\lambda_0,\lambda_1,\cdots,\lambda_{n-1},\lambda_n)$. A  set of half-integral intervals $\cM$ on $\bI$ is called a {\bf lower bound consistent covering} of the open interval $(\|\lambda_i\|,\|\lambda_{i+1}\|)$ if there exists a unique decorated real number $\alpha_i$, such that 
\begin{enumerate}
\item[(C1)] for each $x\in(\|\lambda_i\|,\|\lambda_{i+1}\|)$, $|\alpha_i,x|, |\alpha_i,x^-|\in\cM$;
\item[(C2)] if $|\alpha,x|\in\cM$ for some $x\in(\|\lambda_i\|,\|\lambda_{i+1}\|)$, then $\alpha=\alpha_i$;
\item[(C3)] for each $x\in(\|\lambda_i\|,\|\lambda_{i+1}\|)$, $\cM$ does not contain any intervals $|x,\beta|$ or $|x^+,\beta|$.
\end{enumerate} 
And $\cM$ is called a {\bf upper bound consistent covering} of the open interval $(\|\lambda_i\|,\|\lambda_{i+1}\|)$ if there exists a unique decorated real number $\beta_i$, such that\begin{enumerate}
\item[(C1')] for each $x\in(\|\lambda_i\|,\|\lambda_{i+1}\|)$, $|x,\beta_i|, |x^+,\beta_i|\in\cM$;
\item[(C2')] if $|x,\beta|\in\cM$ for some $x\in(\|\lambda_i\|,\|\lambda_{i+1}\|)$, then $\beta=\beta_i$;
\item[(C3')] for each $x\in(\|\lambda_i\|,\|\lambda_{i+1}\|)$, $\cM$ does not contain any intervals  $|\alpha_i,x|, |\alpha_i,x^-|$.
\end{enumerate} 

 We say $\cM$ is {\bf consistent above/below}  if $\cM$ is an  upper/lower bound consistent covering for each open interval $(\|\lambda_i\|,\|\lambda_{i+1}\|)\subseteq \bI$. We say $\cM$ is {\bf interval consistent} if for each open interval $(\|\lambda_i\|,\|\lambda_{i+1}\|)\subseteq I$, it is either a lower or upper bound consistent covering. We call an additive subcategory $\fT\subseteq\rep(\bI)$ interval consistent  (resp. consistent above/below)  if $\cM=\{I:k_I\in\fT\}$ is interval consistent (resp. consistent above/below). 
 
 For an interval consistent set $\cM$, denote by $\widehat\cM$ the largest subset of $\cM$ consisting of integral intervals. And denote by $\widehat\fT$ the largest full subcategory of an interval consist subcategory $\fT$ consisting of integral representations. 
 \end{defn}

\begin{rmk}\ \\
{\rm (1)} The concept of interval consistent subcategoires over the interval $\bI=[0,1]$ was introduced in \cite{LZ1}, where it was called representations of type $\bm\lambda$. \\
{\rm (2)} By definition, consistent above/below sets are interval consistent. And the interval consistency of $\cM$ is a condition imposed on $\cM\setminus \widehat\cM$. Whether  $\cM$ contains integral intervals does not affect its interval consistency property.
\end{rmk}

\begin{exm}
 Let $\bI=[0,+\infty)$ and $\bm\lambda=(\lambda_0,\lambda_1,\lambda_2)=(0,1,+\infty)$.

(1) Let $\fT$ be an additive full subcategory of $\bI\pwf$ consists of indecomposable modules:  
$\bk_{(0,x)}, \bk_{(0,x]}$, for $0<x\leq 1$  and  
$\bk_{(x,+\infty)}, \bk_{[x,+\infty)}$, for $1\leq x<+\infty. $
Then $\fT$ is interval consistent.

(2) Let $\fT'$ be an additive full subcategory of $\bI\pwf$ consists of indecomposable modules:  
$\bk_{(0,x)}, \bk_{(0,x]}, \bk_{[0,x]}$, for $0<x\leq 1$  and  
$\bk_{[x,+\infty)}$, for $1\leq x<+\infty. $
Then $\fT$ is not consistent. Because the $\cM$ contains both $\bk_{[0,x]}$ and $\bk_{(0,x]}$  and misses modules $\bk_{(x,+\infty)}$ for $1<x<+\infty$.
\end{exm}

We can specialize the notion of interval consistent subcategories for continuous quivers of type $\bA$. Let $\bI_{m,n}^*$ be a continuous quiver as in Definition \ref{def:RAn}. Then a set of half-integral intervals $\cM$ on $\bI_{m,n}^*$ is consistent above/below if it is a upper/lower bound consistent covering for each open interval for each interval $(i,i+1)$, $i=m,\cdots,n-1$.
And it is interval consistent if it is an upper or lower bound consistent covering for each interval $(i,i+1)$, $i=m,\cdots,n-1$. 

\section{Maximal rigid interval consistent subcategories}

\subsection{Rigid subcategories and compatible sets}
Our main goal in this section is to give a full characterization of maximal rigid subcategories of the category of continuous representations of type $A$, which are interval consistent. First we recall the definition of maximal rigid subcategories of an abelian category.
 \begin{defn}\label{def:rigid_ab}
 Let $\cA$ be an  abelian category.  An additive full subcategory $\fT\subseteq \cA$ is called rigid if  $\Ext^1_{\cA}(M,N)=0$ for any pair of objects $M,N\in \fT$. It is called maximal rigid, if $\fT$ is not a proper subcategory of any rigid subcategory. Denote by $\Rig(\cA)$ (resp. $\MRig(\cA)$) the collection of (maximal) rigid subcategories of $\cA$.
 \end{defn}

Recall that a category is skeletal small if the collection of isomorphism classes of objects forms a small set. The collection of additive subcategories of a skeletal small abelian category is still a small set. So whenever $\cA$ is skeletal small, $\Rig(\cA)$ and $\MRig(\cA)$ are actually sets rather than proper classes. 
As far as (maximal) rigid subcategories of abelian subcategories of $\rep(\bR)$ are considered, it is essential to study the intervals $I$ corresponding to the indecomposable interval modules $k_I$. So we introduce the notion of compatible sets.

 \begin{defn}\label{def:compatible}
Two intervals $I$ and $J$ (or two interval modules $k_I$ and $k_J$) are {\bf mutually compatible} if $\Ext^1(\bk_I,\bk_J)=0=\Ext^1(\bk_J,\bk_I)$.
An  interval set $\mathcal M$ is called {\bf compatible} if $\forall I, J\in \cM$, $I$ and $J$ are mutually compatible. A subset $\cM$ of an interval set $\mathcal I$ is called maximal compatible in $\mathcal I$ if $\cM$ is not a proper subset of any compatible subset of $\mathcal I$. 
 \end{defn}

The following lemma will be heavily used later, where the first two statements give algebraic and geometric characterizations of mutually compatible intervals respectively, and  the last statement  is useful for inductive arguments. 

 \begin{lem}\label{lem:key} The following statements hold.
 \begin{enumerate}
\item Let $|\alpha,\beta|$ and $|\gamma,\delta|$ be admissible intervals in $\bR$. Then $\Ext^1(\bk_{|\alpha,\beta|},\bk_{|\gamma,\delta|})\neq0$ if and only if $\alpha\prec \gamma \preceq \suc(\beta) \preceq \delta$. 
\item Let $I$ and $J$ be two intervals in $\bR$. Then $\Ext^1(\bk_I,\bk_J)=0=\Ext^1(\bk_J,\bk_I)$ if and only if either there are containments $I\subseteq J$ or $J\subseteq I$, or $I\cup J$ is not an interval.
\item Let $\mathcal I$ be a set of intervals and $\cM$ be a compatible subset of $\mathcal I$. For an interval $I\in \mathcal M$, denote by $\cM_I=\{J\in \cM: J\subseteq I\}$. If an interval $K\subseteq I$ is compatible with all intervals in $\cM_I$, then it is compatible with all intervals in $\cM$.
\end{enumerate}
\end{lem}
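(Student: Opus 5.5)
For statement (1), I will compute $\Ext^1$ in $\rep(\bR)$ using projective resolutions of interval modules. Write $I=|\alpha,\beta|$. The module $\bk_{|\alpha,\beta|}$ is a quotient of $P_\alpha=\bk_{|\alpha,+\infty|}$. When $\beta\neq+\infty$, the kernel is $\bk_{|\suc(\beta),+\infty|}$, which is projective, and when $\beta=+\infty$ the module is itself projective. Here admissibility guarantees that $\beta$ is real or $y^-$, so $\suc(\beta)$ exists and equals $y^+$ or $y$. This gives a resolution
\[0\to P_{\suc(\beta)}\to P_\alpha\to \bk_{|\alpha,\beta|}\to 0.\]
Applying $\Hom(-,\bk_{|\gamma,\delta|})$ identifies $\Ext^1$ with the cokernel of the restriction map $\Hom(P_\alpha,\bk_J)\to\Hom(P_{\suc(\beta)},\bk_J)$. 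By Yoneda, $\Hom(P_\mu,\bk_J)\cong \bk_J(\mu)$, where the evaluation at a decorated point is understood in the obvious way; it is $k$ iff $\gamma\preceq\mu\preceq\delta$. The map between these spaces is the structure map $\rho_{\alpha,\suc(\beta)}$ of $\bk_J$. The cokernel is nonzero exactly when $\bk_J(\suc(\beta))\neq0$ but the map from $\bk_J(\alpha)$ is not surjective, that is, when $\gamma\preceq\suc(\beta)\preceq\delta$ and $\alpha\notin J$. Since $\alpha\preceq\suc(\beta)$, the second condition means $\alpha\prec\gamma$. This yields precisely $\alpha\prec\gamma\preceq\suc(\beta)\preceq\delta$. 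The main care needed is bookkeeping with decorations, checking that evaluation at $x^\pm$ behaves correctly for admissible endpoints.

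For statement (2), I will translate (1) geometrically. The condition $\alpha\prec\gamma\preceq\suc(\beta)\preceq\delta$ says that $J$ starts strictly after $I$, ends no earlier than $I$ ends, and starts no later than immediately after $I$'s end. Equivalently, $I\cup J$ is an interval, $J\not\subseteq I$, and $I\not\subseteq J$. Swapping roles handles the other Ext group. I will then argue that if $I\cup J$ is an interval and neither contains the other, then one interval begins strictly before the other and ends strictly before it. Contiguity of the union gives $\gamma\preceq\suc(\beta)$ (or the symmetric condition), so one of the two Ext groups is nonzero. Conversely, nesting or a non-interval union visibly violates both inequality chains. This is a short case check.

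For statement (3), I take $J\in\cM$ and $K\subseteq I$ with $K$ compatible with $\cM_I$. Since $I,J$ are compatible, by (2) either $J\subseteq I$, in which case $J\in\cM_I$ and we are done, or $I\subseteq J$, or $I\cup J$ is not an interval. If $I\subseteq J$, then $K\subseteq J$, so $K$ and $J$ are compatible by (2). If $I\cup J$ is not an interval, then, since $K\subseteq I$ and $K\cup J\subseteq I\cup J$, the set $K\cup J$ is not an interval either: any gap between $I$ and $J$ is also a gap between $K$ and $J$. Thus $K$ and $J$ are compatible by (2). I expect (1), specifically the decoration-sensitive computation, to be the only real obstacle.
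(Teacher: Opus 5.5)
Your proof is correct. For (2) and (3) it is the paper's argument. The paper says only that (2) ``follows from (1)''. For (3), it splits $\cM$ into $\cM_I$, the intervals containing $I$, and the intervals $J$ with $I\cup J$ not an interval, and asserts without comment that $K$ is compatible with the last two families. Your ``any gap between $I$ and $J$ is a gap between $K$ and $J$'' argument supplies that missing justification.

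For (1) your route is genuinely different. The paper only exhibits the non-split sequences $0\to\bk_{|\gamma,\delta|}\to\bk_{|\alpha,\delta|}\oplus\bk_{|\gamma,\beta|}\to\bk_{|\alpha,\beta|}\to0$ and $0\to\bk_{|\gamma,\delta|}\to\bk_{|\alpha,\delta|}\to\bk_{|\alpha,\beta|}\to0$. This gives at best the ``if'' direction; the paper never argues that $\Ext^1$ vanishes outside these cases. Its first case is also stated with $\gamma\prec\beta$, which misses the boundary case $\gamma=\beta$ (e.g.\ $I=[0,1]$, $J=[1,2]$, where the same sequence works with $\bk_{|\beta,\beta|}$ in the middle).

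Your computation via the length-one resolution $0\to P_{\suc(\beta)}\to P_\alpha\to\bk_{|\alpha,\beta|}\to0$ settles both directions uniformly. It also shows that $\Ext^1$ is at most one-dimensional. The price is two facts you state rather than prove:
\begin{itemize}
\item $\bk_{(a,+\infty)}$ is projective in $\rep(\bR)$; the paper asserts this.
\item The ``decorated Yoneda'' identification $\Hom(\bk_{(a,+\infty)},\bk_J)\cong\varprojlim_{x\downarrow a}\bk_J(x)$. For interval modules this is a direct check, and it yields exactly your criterion $\gamma\preceq a^+\preceq\delta$.
\end{itemize}

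What the paper's version buys is the explicit middle terms, which it reuses later. For example, in Proposition~\ref{prop:max_rig_perp} closure of $\fT^{\perp_1}$ under extensions is applied to the first sequence.
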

\begin{proof}
(1) Nonzero extensions between $\bk_{|\alpha,\beta|}$ and $\bk_{|\gamma,\delta|}$ correspond to short exact sequences:
\begin{align*}
&0\to \bk_{|\gamma,\delta|} \to \bk_{|\alpha,\delta|}\oplus \bk_{|\gamma,\beta|}\to \bk_{|\alpha,\beta|}\to 0, \text{\ when } \alpha\prec\gamma\prec \beta\prec\delta,\\
\text{or\ \ } &0\to \bk_{|\gamma,\delta|} \to \bk_{|\alpha,\delta|} \to \bk_{|\alpha,\beta|}\to 0, \text{\ when } \gamma =\suc(\beta).
\end{align*}
(2) follows from (1).\\
(3) Denote by ${\cM_I}'=\{J\in\cM: I\subseteq J\}$ and ${\cM_I}''=\{J\in\cM: I\cup J$ is not an interval$\}$. According to Lemma \ref{lem:key}, $\cM=\cM_I\cup{\cM_I}'\cup {\cM_I}''$. It is clear that $K$ is compatible with intervals in ${\cM_I}'$ and ${\cM_I}''$. So it is compatible with all intervals in $\cM$.
\end{proof}

Let $\Intv(\bI_{m,n}^*)$ (resp. $\widehat\Intv(\bI_{m,n}^*)$) be the set of all (resp. integral) intervals on $\bI_{m,n}^*$. We say $\cM$ is a {\bf (maximal) compatible set on $\bI_{m,n}^*$} if it is a (maximal) compatible subset of $\Intv(\bI_{m,n}^*)$. We say $\cM$ is a {\bf (maximal) compatible set of integral intervals on $\bI_{m,n}^*$}, if it is a (maximal) compatible subset of $\widehat\Intv(\bI_{m,n}^*)$.
 Denoted by $\Com(\bI_{m,n}^*)$ (resp. $\MCom(\bI_{m,n}^*)$) the set of all (resp. maximal) compatible sets on $\bI_{m,n}^*$ and $\widehat\Com(\bI_{m,n}^*)$ (resp. $\widehat\MCom(\bI_{m,n}^*)$) the set of all (resp. maximal) compatible sets of integral intervals on $\bI_{m,n}^*$.

For a subcategory $\fT$ of $\rep(\bR)$, denote by $\mathcal S(\fT)=\{I:k_I\in\fT\}$. From the definition, ${\mathcal S}$ defines a function between the set of rigid subcategories and compatible sets. 

\begin{prop}\label{prop:supp}
The map ${\mathcal S}$ yields bijections between the following sets:
 \begin{align*}
{\mathcal S}:  
  \MRig (\rep(\bI_{m,n}^*)) &\leftrightarrow \MCom(\bI_{m,n}^*) \\
  \MRig (\bI_{m,n}^*\Int) &\leftrightarrow \widehat\MCom(\bI_{m,n}^*) \\
\fT &\mapsto  {\mathcal S}(\fT)  \\
\fT=\add\{  k_I  :I\in\cM\} &\mapsfrom \cM 
 \end{align*}
 \end{prop}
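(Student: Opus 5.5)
The plan is to show first that $\mathcal S$ and the assignment $\cM\mapsto\add\{k_I:I\in\cM\}$ are mutually inverse bijections between \emph{all} rigid subcategories and \emph{all} compatible sets, in both settings. Both constructions preserve inclusions, so the maximal objects then correspond and the two stated bijections follow.

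The basic input is Krull--Schmidt. By the result of \cite{IRT} recalled in Section 3, every object of $\rep(\bI_{m,n}^*)$ is a finite direct sum of interval modules, and by Theorem \ref{thm:barcode ]} this decomposition is unique. Every object of $\bI_{m,n}^*\Int$ is by definition a finite direct sum of $k_I$ with $I$ integral.

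First, $\mathcal S(\fT)$ is compatible whenever $\fT$ is rigid. Here I take additive subcategories to be closed under direct summands and isomorphisms, as is implicit in the notation $\add$. Under that convention, if $k_I,k_J\in\fT$ then $\Ext^1(k_I,k_J)=0=\Ext^1(k_J,k_I)$. In the integral case $\mathcal S(\fT)$ consists of integral intervals, since $\fT\subseteq\bI_{m,n}^*\Int$ and uniqueness of decomposition forces every summand $k_I$ to have $I$ integral.

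Conversely, for a compatible $\cM$ the subcategory $\add\{k_I:I\in\cM\}$ is rigid. The reason is that $\Ext^1$ commutes with finite direct sums in each variable, so $\Ext^1(\bigoplus k_{I_i},\bigoplus k_{J_j})=\bigoplus\Ext^1(k_{I_i},k_{J_j})=0$.

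Next I check that the two maps are mutually inverse.
\begin{itemize}
\item $\mathcal S(\add\{k_I:I\in\cM\})=\cM$. An indecomposable $k_J$ lying in this $\add$ is a summand of some $\bigoplus k_{I_i}$, so uniqueness of decomposition gives $k_J\cong k_{I_i}$ for some $i$. Distinct intervals give non-isomorphic modules (compare supports), hence $J=I_i\in\cM$.
\item $\add\{k_I: I\in\mathcal S(\fT)\}=\fT$. Each object of $\fT$ decomposes into interval modules, and each summand lies in $\fT$ by closure under summands.
\end{itemize}
All of this uses only Krull--Schmidt uniqueness and additivity of $\Ext^1$.

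Finally, maximality transfers. Both maps preserve inclusion, and by the previous step they are inverse order isomorphisms between the posets $\Rig$ and $\Com$ (respectively their integral versions). Maximal elements therefore correspond.
\begin{itemize}
\item If $\fT$ is maximal rigid and $\mathcal S(\fT)\subsetneq\cM'$ with $\cM'$ compatible, then $\fT\subsetneq\add\{k_I: I\in\cM'\}$, which is rigid. This contradicts maximality of $\fT$.
\item The converse direction is the same argument with the roles of the two maps exchanged.
\end{itemize}
For the integral statement I need ambient compatibility to be computed the same way, since $\Ext^1$ in $\bI_{m,n}^*\Int$ might a priori differ from $\Ext^1$ in $\rep(\bI_{m,n}^*)$. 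Lemma \ref{lem:eqv_A} shows $\bI_{m,n}^*\Int\simeq kQ_{m,n}^*\modu$, and the extension sequences in Lemma \ref{lem:key}(1) have middle terms with integral endpoints whenever the end terms do. Hence $\bI_{m,n}^*\Int$ is extension-closed in $\rep(\bI_{m,n}^*)$ and the Yoneda $\Ext^1$ groups agree.

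I expect the main obstacle to be this last identification of $\Ext^1$, together with being careful about the closure-under-summands convention for ``additive subcategory''. Without that convention $\mathcal S$ would fail to be injective, so I will state the convention explicitly. Everything else is formal.
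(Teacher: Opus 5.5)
Your proof is correct. It is the routine verification the paper leaves implicit: the paper states Proposition \ref{prop:supp} without proof, as immediate from the definitions. Your care about summand-closure (automatic anyway for maximal rigid subcategories, since $\add\fT$ is rigid whenever $\fT$ is) and about comparing $\Ext^1$ in $\bI_{m,n}^*\Int$ with $\Ext^1$ in $\rep(\bI_{m,n}^*)$ fills in points the paper passes over.

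One small tightening. The indecomposable sequences of Lemma \ref{lem:key}(1) do not by themselves prove extension-closedness for arbitrary integral objects. You do not need it, though: it suffices that for integral $I,J$, $\Ext^1(k_I,k_J)$ vanishes in one category iff it vanishes in the other. This follows because the nonsplit sequence of Lemma \ref{lem:key}(1) lies in $\bI_{m,n}^*\Int$, while a sequence in the full exact subcategory $\bI_{m,n}^*\Int$ that splits in $\rep(\bI_{m,n}^*)$ already splits there. Full extension-closedness is also true: integral representations are exactly those whose structure maps $\rho_{xy}$ are isomorphisms for $x\le y$ in a common open cell $(i,i+1)$, and this condition is preserved under extensions by the five lemma.
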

  Thus, classifying maximal rigid subcategories is equivalent to classifying maximal compatible sets. We start from several motivating examples.
 
 \begin{exm}\label{exm:2}  
The set $\mathcal M=\{ [x,2], (x,2] : 0\leq x\leq 2\}$ is maximal compatible on $[0,2]$.

In fact, $\cM$ is clearly compatible. Assume $\add (\cM\cup\{|\alpha,\beta|\})$ is also compatible. If $\alpha=\beta$, then they must be a real number. Hence $|\alpha,\alpha|$ is compatible with $\cM$ if and only if $\alpha=2$ and then $|\alpha,\alpha|\in\cM$. If $\alpha\prec\beta\prec 2$,   there exists real number $x$, such that $\alpha\preceq x\preceq \beta$. Hence  $\alpha\prec  \suc(x)\preceq \suc(\beta) \preceq 2$, which implies that  $|\alpha,\beta|$ is incompatible with $(x,2]$, a contradiction. Therefore $\beta=2$ and $|\alpha,\beta|\in\cM$. 
\end{exm} 

\begin{exm}\label{exm:1}
Let $\cM=\cM_1\cup\cM_2$, where   
   $\cM_1=\{\displaystyle(\frac{m}{2^n},\frac{m+1}{2^n}): n\geq 0, 0\leq m\leq 2^n-1\}, $ and
  $\cM_2=\{[x,x]: x\in \mathbb R, x\neq\displaystyle\frac{m}{2^n}, n\geq 0, 0<m<2^n-1\}.$ Then we claim that $\cM$ is a maximal compatible set on $(0,1)$.
  
  \begin{figure}[h]
    \centering
    \begin{tikzpicture}
         \draw[-{To[scale=1.3]},line width=1pt, color=black] (-3.4, 0) -- (3.4, 0);
    \foreach \x in {-3,-1.5,...,3}
        \draw[shift={(\x,0)},color=black, line width=1pt] (0pt,0pt) -- (0pt,5pt);
    \draw[shift={(-3,-0.4)},color=black] node[font=\large,text height=12pt] {$0$};
    \draw[shift={(3,-0.4)},color=black] node[font=\large,text height=12pt] {$1$};
 \draw[shift={(0,-0.4)},color=black] node[font=\large,text height=12pt] {$\frac{1}{2}$};
 \draw[shift={(-1.5,-0.4)},color=black] node[font=\large,text height=12pt] {$\frac{1}{4}$};
 \draw[shift={(1.5,-0.4)},color=black] node[font=\large,text height=12pt] {$\frac{3}{4}$};
\draw[shift={(-2.25,.25)},color=black] node[font=\small,text height=12pt] {$\cdots$};
 \draw[shift={(-0.75,.25)},color=black] node[font=\small,text height=12pt] {$\cdots$};
 \draw[shift={(0.75,.25)},color=black] node[font=\small,text height=12pt] {$\cdots$};
 \draw[shift={(2.25,.25)},color=black] node[font=\small,text height=12pt] {$\cdots$};
  \draw (-3,0)  .. controls  +(north:18 mm) and +(north:18 mm) .. (3,0);
\draw (-3,0)  .. controls  +(north:8 mm) and +(north:8 mm) ..   (0,0);
\draw (0,0)  .. controls  +(north:8 mm) and +(north:8 mm) ..   (3,0);
\draw (-3,0)  .. controls  +(north:4 mm) and +(north:4 mm) ..   (-1.5,0);
\draw (-1.5,0)  .. controls  +(north:4 mm) and +(north:4 mm) ..   (0,0);
\draw (0,0)  .. controls  +(north:4 mm) and +(north:4 mm) ..   (1.5,0);
\draw (1.5,0)  .. controls  +(north:4 mm) and +(north:4 mm) ..   (3,0);
    \end{tikzpicture}
 \caption{An example of a maximal compatible set}
    \label{fig:exm_1}
\end{figure}

  In fact, it is easy to check that $\cM$ is compatible.  Now assume $\add (\cM\cup\{|\alpha,\beta|\})$ is also compatible. If $\|\alpha\|=\|\beta\|$, then $|\alpha,\beta|$ must be a closed interval of the form $[x,x]$. The compatibility forces $\alpha=\beta=x\in\bR\setminus\{\displaystyle\frac{m}{2^n}: n\geq 0, 0<m<2^n-1 \}$. Hence $|\alpha,\beta|\in\cM$.
  
  In the following, we assume $\|\alpha\|\neq\|\beta\|$.  If $\|\alpha\|\neq\displaystyle\frac{m}{2^n}$ for $n\geq 0, 0<m<2^n-1$, then there exists a sufficiently large $N$ and $m$, such that $\displaystyle\frac{m}{2^N}\prec\alpha\prec \frac{m+1}{2^N}\prec \beta$, which is a contradiction. Hence $\|\alpha\|=\displaystyle\frac{m}{2^n}$ for some integer $m,n$. For the same reason, $\|\beta\|=\displaystyle\frac{p}{2^q}$ for some integer $p,q$.
  Since the set $\cM_1$ contains all open intervals $\displaystyle(\frac{m}{2^n},\frac{m+1}{2^n})$. This  forces $|\alpha,\beta|$ to be an open interval $(\displaystyle\frac{m}{2^n},\frac{p}{2^q})$ as well. Assuming both fractions  $\displaystyle\frac{m}{2^n}$ and $\displaystyle\frac{p}{2^q}$ are reduced, i.e. $m,p$ are odd integers, to show $|\alpha,\beta|\in\cM_1$, it suffices to prove that $\displaystyle\frac{p}{2^q}-\frac{m}{2^n}=\frac{1}{2^{\max\{n,q\}}}$.\\
  
  Suppose $q\leq n$. Since $m$ is an odd integer, from the construction of $\cM_1$, it follows that $\displaystyle(\frac{m-1}{2^n},\frac{m+1}{2^n})\in\cM_1$. So if  $\displaystyle\frac{p}{2^q}-\frac{m}{2^n}>\frac{1}{2^n}$, the intervals $\displaystyle(\frac{m-1}{2^n},\frac{m+1}{2^n})$ and $(\displaystyle\frac{m}{2^n},\frac{p}{2^q})$ are not mutually compatible, which is a contradiction. Meanwhile, obviously we have $\displaystyle\frac{p}{2^q}-\frac{m}{2^n}\geq\frac{1}{2^n}$. Thus, it forces $\displaystyle\frac{p}{2^q}-\frac{m}{2^n}=\frac{1}{2^n}$. A similar argument applies for the case $n\geq q$. Therefore, $|\alpha,\beta|\in\cM$ and $\cM$ is a maximal rigid set.
 \end{exm} 
   
In Example \ref{exm:2}, one can clearly see its analogue with tilting modules: the rigid subcategory $\fT=\add\{k_I:I\in\cM\}$ contains exactly all indecomposable projective objects. In contrast, Example \ref{exm:1} produce a maximal compatible set in a different flavor, suggesting that not all maximal rigid subcategories are interval consistent for some critical vector. So it seems ambitious to look for a classification of all maximal compatible sets on $\bR$.
One should also compare the second example with so-called ``standard cluster'' in continuous cluster categories (see \cite[Definition 4.2.6]{IT1}). The rest of this section is devoted to study maximal compatible sets similar to the first example, which are in fact interval consistent. 

\subsection{Maximal rigid interval consistent subcategories}
A subcategory $\fT$ of $\rep(\bI_{m,n}^*)$ is called rigid interval consistent if $\fT$ is both rigid and interval consistent. A rigid interval consistent subcategory is called {\bf maximal rigid interval consistent (MRIC)} if it is not a proper subcategory of any  rigid interval consistent subcategories. Denote by $\MRIC(\bI_{m,n}^*)$ the set of all MRIC subcategories of $\rep(\bI_{m,n}^*)$.
 Accordingly, a set $\cM$ of semi-integral intervals on $\bI_{m,n}^*$ is called compatible interval consistent if it is both compatible and interval consistent. A compatible interval consistent set is called  {\bf maximal compatible interval consistent (MCIC)}, if it is not a proper subset of any  compatible interval consistent set. Denote by $\MCIC(\bI_{m,n}^*)$ the set of all MCIC sets on $\bI_{m,n}^*$.
Similarly, one can define {\bf maximal rigid consistent below/above (MRCB/MRCA)} subcategories and {\bf maximal compatible consistent below/above (MCCB/MCCA)} sets. Denote by $\MRCB(\bI_{m,n}^*)$ (resp. $\MRCA(\bI_{m,n}^*)$) the set of all MRCB (resp. MRCA) subcategories of $\rep(\bI_{m,n}^*)$.
 
Notice that from the definition,  MCIC subsets on $\bI_{m,n}^*$ are only maximal  compatible among interval consistent subsets  of $\Intv(\bI_{m,n}^*)$. The following proposition shows that they are actually maximal compatible among all subsets. 

 \begin{lem} \label{lem:max_rig_ic}\ 
 Let $\cM$ be an interval consistent set on $\bI_{m,n}^*$. If a non-integral interval $J=|\alpha,\beta|$ is  compatible with all intervals in $\cM$, then $J\in\cM$. 
 Consequently, each MCIC set on $\bI_{m,n}^*$ is maximal compatible on $\bI_{m,n}^*$.
\end{lem}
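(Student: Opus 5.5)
Since $J=|\alpha,\beta|$ is non-integral, at least one of $\|\alpha\|,\|\beta\|$ is not an integer. I will treat the case where $x:=\|\beta\|\in(i,i+1)$ for some $i$, so $\beta\in\{x,x^-\}$; the case $\|\alpha\|\in(i,i+1)$ is symmetric. Both endpoints non-integral also falls under this case. The tool throughout is Lemma~\ref{lem:key}(1),(2): two intervals are incompatible exactly when they overlap without containment, or are adjacent with union an interval.

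Interval consistency gives two subcases for the open interval $(i,i+1)$.

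\emph{Lower bound consistent covering.} By (C1), $\cM$ contains $|\alpha_i,y|$ and $|\alpha_i,y^-|$ for every $y\in(i,i+1)$.
\begin{enumerate}
\item[(a)] First I show $\alpha=\alpha_i$. Suppose $\alpha\prec\alpha_i$. Choose $y\in(i,i+1)$ with $y<x$ close to $x$, and $y>\|\alpha_i\|$ if $\|\alpha_i\|\in(i,i+1)$. Then $|\alpha_i,y|$ and $J$ overlap with neither contained in the other, so they are incompatible by Lemma~\ref{lem:key}(2).
\item[(b)] Suppose instead $\alpha_i\prec\alpha$. If $\alpha\preceq x$, choose $y\in(\|\alpha\|,x)$ or take $y$ just above $x$; then $|\alpha_i,y|$ again overlaps $J$ without containment. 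If $\|\alpha\|=x$, so that $J=[x,x]$, then $|\alpha_i,x^-|$ and $J$ are adjacent with union an interval, contradicting compatibility.
\item[(c)] Hence $\alpha=\alpha_i$, and $J$ is one of $|\alpha_i,x|,|\alpha_i,x^-|$, both of which lie in $\cM$ by (C1).
\end{enumerate}

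\emph{Upper bound consistent covering.} Here $\cM$ contains $|y,\beta_i|$ and $|y^+,\beta_i|$ for all $y\in(i,i+1)$. Taking $y$ slightly less than $x$ with $y>\|\alpha\|$ (possible when $\|\alpha\|<x$), the interval $|y,\beta_i|$ contains the right end of $J$, so containment or disjointness must fail. Comparing $\beta$ with $\beta_i$, which satisfies $\|\beta_i\|\geq i+1>x$ or lies beyond, this gives a contradiction in all cases except when $\alpha\in\{x,x^+\}$ i.e. $\|\alpha\|=x$. That remaining case is handled in the mirror fashion to the lower-bound case, forcing $J=|x,\beta_i|$ or $|x^+,\beta_i|$, which is impossible since $\beta\preceq x$; or, more cleanly, one redoes the argument with the roles of the endpoints swapped.

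The real dichotomy is which endpoint is non-integral, and the case analysis on decorations must be organized carefully. I would do this by first reducing via the symmetry $x\mapsto -x$, which swaps "below" and "above" and swaps left and right endpoints. Then only two subcases remain: the non-integral endpoint is on the "moving" side of the covering, or on the "fixed" side. The fixed-side case is where (C3)/(C3') enter: they rule out members of $\cM$ starting at $x$, and $J$ is shown to be incompatible with a suitable member $|\alpha_i,y|$, $y$ slightly larger than $x$, which crosses it. I expect this bookkeeping of decorations, using $\suc$ in Lemma~\ref{lem:key}(1) for adjacency, to be the main obstacle.

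\emph{Consequence.} Let $\cM$ be MCIC and let $K$ be any interval compatible with $\cM$. If $K$ is non-integral, then $K\in\cM$ by the above. If $K$ is integral, then $\cM\cup\{K\}$ is still interval consistent by the earlier Remark (integral intervals do not affect consistency) and is compatible. Maximality then gives $K\in\cM$. Hence $\cM$ is maximal compatible on $\bI_{m,n}^*$.
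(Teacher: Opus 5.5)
Your strategy matches the paper's up to reflection. The paper assumes the left endpoint $\alpha$ is non-integral in $(i,i+1)$. It then rules out a lower bound covering of $(i,i+1)$, because some $|\alpha_i,y|$ would cross $J$. In the upper bound case it shows that compatibility forces $\beta=\beta_i$, so $J\in\cM$ by (C1'). Your ``moving side / fixed side'' dichotomy is exactly this, and your proof of the consequence coincides with the paper's.

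\textbf{Step (a) fails as written.} In your lower-bound case, if $\alpha\prec\alpha_i$ and $y<x$, then $y\prec x^-\preceq\beta$. Hence $|\alpha_i,y|\subseteq J$, the two intervals are compatible by Lemma~\ref{lem:key}(2), and no contradiction arises. You need $y\in(x,i+1)$ instead. Since $|\alpha_i,y|$ is an interval for every $y\in(i,i+1)$, we have $\alpha_i\preceq i^+$, so
\[
\alpha\prec\alpha_i\preceq i^+\prec x\preceq\suc(\beta)\preceq x^+\prec y ,
\]
and Lemma~\ref{lem:key}(1) gives $\Ext^1(k_J,k_{|\alpha_i,y|})\neq0$.

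\textbf{Step (b) has the dual problem.} The alternative ``take $y$ just above $x$'' fails, since then $J\subseteq|\alpha_i,y|$. Only the choice $y\in(\max\{\|\alpha\|,i\},x)$ works, giving $\alpha_i\prec\alpha\preceq y^+\preceq\beta$. Your proviso ``if $\|\alpha_i\|\in(i,i+1)$'' is vacuous: $\alpha_i\preceq i^+$, and $\|\alpha_i\|\in\bZ$ by half-integrality.

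\textbf{Upper-bound case.} This is fine in substance. The leftover case $J=[x,x]$ there can be killed directly by adjacency, $\Ext^1(k_{[x,x]},k_{|x^+,\beta_i|})\neq0$, without appealing to the mirrored lower-bound case. With these repairs your argument is complete. Your separate treatment of $J=[x,x]$ covers a case the paper's proof leaves implicit.
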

\begin{proof} 
Without loss of generality, assume $\|\alpha\|$ is not an integer and $i^+\prec\alpha\prec (i+1)^-$. First, notice that $\beta\succeq (i+1)^-$. Otherwise, it wouldn't be possible for any semi-integral interval starting or ending at $\alpha\prec x\prec \beta$ to be compatible with  $J$. In order to be compatible with $J$, the interval consistency condition implies that $\cM$ must contain semi-integral intervals $|x,\gamma|$ rather than  $|\gamma,x|$ for each real number $i<x<i+1$,  which also forces $\gamma=\beta$. But this means $J\in\cM$.

\begin{figure}[h]
    \centering
    \begin{tikzpicture}
         \draw[-{To[scale=1.3]},line width=1pt, color=black] (-3.4, 0) -- (3.4, 0);
    \foreach \x in {-3,-1,...,3}
        \draw[shift={(\x,0)},color=black, line width=1pt] (0pt,0pt) -- (0pt,5pt);
    \draw[shift={(-3,-0.4)},color=black] node[font=\large,text height=12pt] {$i$};
    \draw[shift={(-1,-0.4)},color=black] node[font=\large,text height=12pt] {$i+1$};
    \draw[shift={(1,-0.4)},color=black] node[font=\large,text height=12pt] {$\beta$};
 \draw[shift={(-2,-0.4)},color=black] node[font=\large,text height=12pt] (a){$\alpha$};

  \draw (-2,0)  .. controls  +(north:10 mm) and +(north:10 mm) .. node[above=1pt,text height=14pt,font=\small]{$J$} (1,0);
\draw (-2.5,0)  .. controls  +(north:18 mm) and +(north:18 mm) ..   (1,0);
\draw (-1.5,0)  .. controls  +(north:5 mm) and +(north:5 mm) ..   (1,0);
    \end{tikzpicture}
 \caption{}
    \label{fig:max_rig_ic}
\end{figure}

For the second assertion, assume $\cM$ is MCIC. It suffices to show that any interval $J\subseteq\bI_{m,n}^*$ compatible with all intervals in $\cM$ belongs to $\cM$.
If $J$ is an integral interval, then $\cM\cup\{J\}$ is still interval consistent and compatible. Thus, the maximality of $\cM$ implies that $J\in\cM.$ If $J$ is non-integral, it follows immediately from the first assertion that $J\in\cM$.
\end{proof}

Applying the correspondence $\mathcal S$ between maximal rigid subcategories and maximal compatible sets mentioned in Proposition \ref{prop:supp}, we obtain the following results.
\begin{cor}\label{cor:MRIC_MCIC}
Each MRIC subcategory is a maximal rigid subcategory of $\rep(\bI_{m,n}^*)$.
Furthermore, the map ${\mathcal S}:  \MRIC(\bI_{m,n}^*)  \to \MCIC(\bI_{m,n}^*) $ is a bijection.
\end{cor}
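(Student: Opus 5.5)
The plan is to transfer Lemma \ref{lem:max_rig_ic} through the correspondence $\mathcal S$ of Proposition \ref{prop:supp}. The basic fact used throughout is that $\fT\mapsto\mathcal S(\fT)$ and $\cM\mapsto\add\{k_I:I\in\cM\}$ are mutually inverse on additive full subcategories closed under isomorphisms and direct summands. Every object of $\rep(\bI_{m,n}^*)$ is a finite direct sum of interval modules, and $\Ext^1$ is additive in each variable. Hence $\fT$ is rigid if and only if $\mathcal S(\fT)$ is compatible, by Definition \ref{def:compatible}. By Definition \ref{def:int_consist}, $\fT$ is interval consistent exactly when $\mathcal S(\fT)$ is interval consistent. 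Both maps also preserve inclusions.

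First step: show that $\mathcal S$ sends $\MRIC(\bI_{m,n}^*)$ into $\MCIC(\bI_{m,n}^*)$. Let $\fT$ be MRIC and put $\cM=\mathcal S(\fT)$, which is compatible and interval consistent. Suppose $\cM\subsetneq\cM'$ with $\cM'$ compatible and interval consistent. Then $\add\{k_I:I\in\cM'\}$ is a rigid interval consistent subcategory properly containing $\fT$, contradicting maximality. So $\cM$ is MCIC.

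Second step: the converse and bijectivity. If $\cM$ is MCIC, the same translation in the other direction shows that $\add\{k_I:I\in\cM\}$ is MRIC. Since the two assignments are mutually inverse, the restriction $\mathcal S:\MRIC\to\MCIC$ is a bijection.

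Third step: show that an MRIC subcategory $\fT$ is maximal rigid. By the first step, $\mathcal S(\fT)$ is MCIC. By the second assertion of Lemma \ref{lem:max_rig_ic}, it is therefore maximal compatible on $\bI_{m,n}^*$, that is, it lies in $\MCom(\bI_{m,n}^*)$. By Proposition \ref{prop:supp}, $\fT=\add\{k_I:I\in\mathcal S(\fT)\}$ lies in $\MRig(\rep(\bI_{m,n}^*))$.

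The argument is essentially formal. The only point needing care is the first step's observation that rigidity and interval consistency are detected on indecomposables, so that inclusions of subcategories correspond exactly to inclusions of interval sets. This holds because we work with additive subcategories closed under summands, so the Krull–Schmidt decomposition applies. I expect no real obstacle beyond stating this cleanly.
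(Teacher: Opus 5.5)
Your proposal is correct and follows the same route as the paper. The paper obtains this corollary by passing the second assertion of Lemma \ref{lem:max_rig_ic} (every MCIC set is maximal compatible) through the correspondence $\mathcal S$ of Proposition \ref{prop:supp}, which is what your three steps do; your caveat that subcategories must be closed under isomorphisms and direct summands, so that $\fT=\add\{k_I:I\in\mathcal S(\fT)\}$, is the convention the paper also relies on implicitly.
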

%

Next two lemmas show that the half-integral intervals in a MCIC set imply the existence of some integral intervals intrinsically.
 
 \begin{lem}\label{lem:bound_int}
 Let $\cM$ be a MCIC set on $\bI_{m,n}^*$. Assume there exists an interval $|\alpha, \beta| \in\cM$, where $i^+\prec \alpha \prec (i+1)^-$, for some $m\leq i<n$. 
 \begin{enumerate}
 \item  If $\beta\succeq i+1$,  then $|i+1, \beta| \in\cM$;  
 \item  Either $|i^+,\beta|\in\cM$, or there exists $|\gamma,i|\in\cM$. 
 \end{enumerate}   
 \end{lem}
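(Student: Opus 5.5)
The plan is to use maximality in the strong form given by Lemma \ref{lem:max_rig_ic}: since $\cM$ is MCIC, it is maximal compatible on $\bI_{m,n}^*$, so any interval of $\bI_{m,n}^*$ that is compatible with every member of $\cM$ already lies in $\cM$. Both statements will then follow by exhibiting a candidate integral interval and checking compatibility against all of $\cM$ with the geometric criterion of Lemma \ref{lem:key}(2). Recall that this criterion says two intervals fail to be compatible exactly when their union is an interval and neither contains the other.

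\textbf{Step 1: the covering on $(i,i+1)$ is of upper bound type with bound $\beta$.} Since $\|\alpha\|\in(i,i+1)$ is not an integer, $\cM$ contains an interval with non-integral left endpoint in $(i,i+1)$. By (C3), $\cM$ cannot be a lower bound consistent covering of $(i,i+1)$, so it is an upper bound covering. Condition (C2') forces $\beta_i=\beta$. Hence, by (C1'), $|x,\beta|$ and $|x^+,\beta|$ lie in $\cM$ for every $x\in(i,i+1)$. Moreover, $|\alpha,\beta|$ is half-integral with $\|\alpha\|\notin\bZ$, so $\|\beta\|$ is an integer. Consequently every interval of the form $|i+1,\beta|$ or $|i^+,\beta|$ is integral and contained in $\bI_{m,n}^*$.

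\textbf{Step 2: proof of (1).} Put $K=|i+1,\beta|$ and suppose some $J=|\gamma,\delta|\in\cM$ is incompatible with $K$. Then $J\cup K$ is an interval and neither interval contains the other, which leaves two cases.
\begin{enumerate}
\item $\gamma\prec i+1$, $\delta\succeq (i+1)^-$ and $\delta\prec\beta$. Choose a real $x$ with $\max(\|\gamma\|,i)<x<i+1$. Then $J$ and $|x,\beta|$ overlap, $J\not\subseteq|x,\beta|$, and $|x,\beta|\not\subseteq J$. This contradicts compatibility within $\cM$.
\item $i+1\prec\gamma$ and $\delta\succ\beta$, with $\gamma\preceq\suc(\beta)$. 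Then $J$ is again incompatible with $|x,\beta|$ for any $x\in(i,i+1)$, since $x\prec\gamma\preceq\suc(\beta)\preceq\delta$ (Lemma \ref{lem:key}(1)). This is also a contradiction.
\end{enumerate}
Thus $K$ is compatible with all of $\cM$, and maximality gives $K\in\cM$.

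\textbf{Step 3: proof of (2).} Assume no interval of the form $|\gamma,i|$ lies in $\cM$, and test $L=|i^+,\beta|$ in the same way. Let $J=|\gamma,\delta|\in\cM$ be incompatible with $L$.
\begin{enumerate}
\item $\gamma\preceq i$ and $\delta\prec\beta$. For the union to be an interval we need $\delta\succeq i$. The case $\delta=i$ is excluded by hypothesis, and $\delta=i^-$ makes $J\cup L$ miss the point $i$, so it is not an interval. Hence $\|\delta\|>i$, and we pick a real $x$ with $i<x<\min(\|\delta\|,i+1)$. Then $J$ overlaps $|x,\beta|$ without containment, a contradiction.
\item $\gamma\succ i^+$ and $\delta\succ\beta$. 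Pick $x$ with $i<x<\min(\|\gamma\|,i+1)$. Then $|x^+,\beta|$ and $J$ satisfy $x^+\prec\gamma\preceq\suc(\beta)\preceq\delta$, so they are incompatible by Lemma \ref{lem:key}(1), again a contradiction.
\end{enumerate}
So $L$ is compatible with $\cM$, and $L\in\cM$ by maximality. This yields the dichotomy in (2).

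The main obstacle I expect is the decorated-endpoint bookkeeping in these case analyses. In particular, the borderline endpoints $\delta\in\{i^-,i\}$ and $\gamma$ with $\|\gamma\|=i+1$ need care, and one must make sure the chosen $x$ really produces a strict overlap rather than a containment. I would handle this by first reducing incompatibility of $J$ with the candidate interval to the inequality chain in Lemma \ref{lem:key}(1), in one of its two orientations, and then choosing $x$ so that the same chain holds with the candidate replaced by $|x,\beta|$ or $|x^+,\beta|$.
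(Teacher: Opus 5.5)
Your proof is correct. Part (2) follows the paper's argument essentially verbatim: assume no $|\gamma,i|\in\cM$, and show that any $J\in\cM$ incompatible with $|i^+,\beta|$ would already be incompatible with some $|x,\beta|$ or $|x^+,\beta|$ for $x\in(i,i+1)$.

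For part (1) the two routes differ slightly.
\begin{itemize}
\item The paper uses Lemma~\ref{lem:key}(3) to reduce to checking compatibility only against $\cM_{|\alpha,\beta|}=\{J\in\cM: J\subseteq|\alpha,\beta|\}$. Interval consistency then shows that every such $J$ either lies in $|i+1,\beta|$ or has the form $|\alpha',\beta|\supseteq|i+1,\beta|$. So compatibility follows from containment alone, with no endpoint inequalities.
\item You instead test every $J\in\cM$ directly through Lemma~\ref{lem:key}(1), using the same witness family $|x,\beta|$ as in (2).
\end{itemize}
Your version is uniform across both parts and does not need the explicit description of $\cM_{|\alpha,\beta|}$. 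The paper's version is shorter once that description is granted.

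One small point that you and the paper both pass over concerns the case where $\alpha=x_0^+$ is an open endpoint. There (C2'), as literally stated, does not apply, so the identification $\beta_i=\beta$ needs a one-line compatibility check. Compare $|x_0^+,\beta|$ with $|y,\beta_i|\in\cM$:
\begin{itemize}
\item if $\beta\prec\beta_i$, take some $y\in(x_0,i+1)$;
\item if $\beta_i\prec\beta$, take some $y\in(i,x_0)$.
\end{itemize}
Lemma~\ref{lem:key}(1) then gives a nonzero $\Ext^1$ in one direction, which is a contradiction.
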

 \begin{proof}
 Since $\cM$ is interval consistent,  $\beta=j$ or $j^-$ for some integer $j$ and  $|\alpha', \beta| \in\cM$ for $i^+\prec\alpha'\prec (i+1)^-$. 
 
 (1) By Lemma \ref{lem:key} (3), it suffices to show that $|i+1, \beta|$ is compatible with all intervals in $\cM_{|\alpha, \beta|}:=\{J\in\cM: J\subseteq |\alpha, \beta|\}$.  In fact, because $\cM$ is interval consistent, we have $$\cM_{|\alpha, \beta|}=\cM_{|i+1, \beta|}\cup \{|\alpha', \beta| :  \alpha\preceq\alpha'\prec (i+1)^- \}.$$ 
 On one hand, for each $I\in\cM_{|i+1,\beta|}$, $I$ is compatible with $|i+1, \beta|$ since $I\subseteq |i+1, \beta|$. On the other hand,
 for each $\alpha\preceq\alpha'\prec (i+1)^- \}$, $|i+1, \beta|\subseteq |\alpha', \beta|$. Hence, we conclude that $|i+1, \beta|$ is compatible with all intervals in $\cM_{|\alpha, \beta|}$.
  
 %
%
%
 
 (2) Suppose $\cM$ does not contain any interval of the form $|\gamma,i|$. We claim that $|i^+,\beta|$ is compatible with all intervals in $\cM$.
 In fact, if there were  an interval $J=|\delta,\varepsilon|\in\cM$ incompatible with $|i^+,\beta|$, then either $\delta\prec i^+\preceq \suc(\varepsilon)\preceq \beta$ or $i^+\prec \delta\preceq \suc(\beta)\preceq \varepsilon$. 
 
 In the former case, since we assume $\cM$ does not contain intervals $|\gamma,i|$, it follows that $\varepsilon\neq i$, and hence $\suc(\varepsilon)\succ i^+$. Therefore, there exists some  $i^+\prec \alpha' \prec (i+1)^-$, such that $\delta\prec \alpha'\prec \suc(\varepsilon)\preceq \beta$. Thus, $J$ is incompatible with some $|\alpha', \beta|\in\cM$, a contradiction. In the later case, $i^+\prec \delta$ implies that there is some $i^+\prec \alpha' \prec (i+1)^-$, such that $\alpha'\prec \delta \preceq \suc(\beta)\preceq \varepsilon$. Hence, $J$ is again incompatible with some $|\alpha', \beta|\in\cM$, a contradiction.
 
 Therefore $|i^+,\beta|$ is compatible with all intervals in $\cM$. Due to the maximality of $\cM$, it contains $|i^+,\beta|$.
%
%
 \end{proof}

  \begin{lem}\label{lem:inf_int}
 Let $\cM$ be a MCIC set on $\bI_{m,n}^*$. If there is at least one interval $|\gamma,\beta|\in\cM$ with $\|\beta\|\in\bZ$, then $|\varphi(\beta),\beta|$ is an integral interval in $\cM$, where $\varphi(\beta)=\inf\{\gamma\in\bR^\pm: |\gamma,\beta|\in\cM\}$.
 \end{lem}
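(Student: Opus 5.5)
We must show two things: the infimum $\varphi(\beta)$ is attained, i.e. $|\varphi(\beta),\beta|\in\cM$, and $\|\varphi(\beta)\|$ is an integer. Here $\beta$ is a decorated number with $\|\beta\|=j\in\bZ$; by admissibility $\beta=j$ or $j^-$. The set $\{\gamma:|\gamma,\beta|\in\cM\}$ is bounded below by the left endpoint of $\bI_{m,n}^*$, so $\varphi:=\varphi(\beta)$ exists by the proposition on infima in $\bR^\pm$.

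\textbf{Step 1: $\|\varphi\|$ is an integer.} Suppose not, say $i<\|\varphi\|<i+1$. Pick $\gamma$ with $|\gamma,\beta|\in\cM$ and $\|\gamma\|\in(i,i+1)$, or at least some $\gamma$ close enough to $\varphi$ that $\gamma\prec (i+1)^-$. Then some interval of $\cM$ has left endpoint in $(i,i+1)$. By (C3) the open interval $(i,i+1)$ cannot be a lower bound consistent covering, so it is an upper bound consistent covering. By (C2') every interval $|x,\beta'|\in\cM$ with $x\in(i,i+1)$ has $\beta'=\beta_i$, and by (C1') all of $|x,\beta_i|$ and $|x^+,\beta_i|$ lie in $\cM$ for every $x\in(i,i+1)$. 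Hence $\beta_i=\beta$, and $|x,\beta|\in\cM$ for all $x\in(i,i+1)$, so $\varphi\preceq i^+$. This contradicts $\|\varphi\|\in(i,i+1)$. The boundary case where $\varphi$ is an infimum of decorated numbers with real value in $(i,i+1)$ is handled by the same density argument.

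\textbf{Step 2: the infimum is attained.} Now $\|\varphi\|=i\in\bZ$, so $\varphi\in\{i^-,i,i^+\}$, with $i^-$ excluded since intervals are admissible. If $|\varphi,\beta|\in\cM$ we are done. Otherwise, consider the two cases.
\begin{enumerate}
\item $\varphi=i$, not attained. Then the intervals $|\gamma,\beta|$ have $\gamma\succ i$, accumulating at $i^+$. This is impossible: the inf would then be $i^+$, or else $i^+$ itself occurs with $\varphi=i^+$.
\item $\varphi=i^+$, not attained. Then there are members $|\gamma,\beta|$ with $\|\gamma\|\in(i,i+1)$ arbitrarily close to $i$. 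By Lemma~\ref{lem:bound_int}(2), either $|i^+,\beta|\in\cM$, which is attainment, or some $|\gamma',i|\in\cM$.
\end{enumerate}

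\textbf{Step 3: the main obstacle.} The remaining case is $|\gamma',i|\in\cM$ while $|i^+,\beta|\notin\cM$. Here I would argue directly by maximality, as in the proof of Lemma~\ref{lem:bound_int}(2). The goal is to show that $|\varphi,\beta|$ is compatible with every $J\in\cM$ and then invoke Lemma~\ref{lem:max_rig_ic} / Corollary~\ref{cor:MRIC_MCIC}, which give that an MCIC set is maximal compatible.

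Suppose $J=|\delta,\varepsilon|$ is incompatible with $|\varphi,\beta|$. By Lemma~\ref{lem:key}(1), either $\delta\prec\varphi\preceq\suc(\varepsilon)\preceq\beta$ or $\varphi\prec\delta\preceq\suc(\beta)\preceq\varepsilon$. In each configuration, the definition of $\varphi$ as an infimum lets us replace $\varphi$ by some $\gamma\succ\varphi$ arbitrarily close with $|\gamma,\beta|\in\cM$. The strict inequality $\varphi\prec\delta$, or $\varphi\preceq\suc(\varepsilon)$ with $\suc(\varepsilon)\ne\varphi$, survives this replacement, giving an incompatibility inside $\cM$, a contradiction.

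The delicate subcase is $\suc(\varepsilon)=\varphi$ exactly, i.e. $\varepsilon=i$ and $\varphi=i^+$. This is precisely where $|\gamma',i|$ can occur, and the resulting extension is a genuine obstruction. I expect to resolve it by noting that it would also make $|\gamma',i|$ incompatible with the intervals $|\gamma,\beta|$ for $\gamma$ near $i^+$ when $\gamma'\prec\gamma$. Using consistency of the covering on $(i,i+1)$ and the integrality from Step 1 to compare endpoints should finish the argument, possibly forcing $\varphi=\gamma'$-related integral bounds. This last case analysis is the step I expect to require the most care.
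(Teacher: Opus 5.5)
\textbf{There is a genuine gap: Step 3 is the whole content of the lemma, and it is left unresolved.} Steps 1 and 2 are fine. They correctly reduce everything to one case: $\varphi(\beta)=i^+$, with $|x,\beta|,|x^+,\beta|\in\cM$ for all $x\in(i,i+1)$, $|i^+,\beta|\notin\cM$, and some $|\gamma',i|\in\cM$.

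In that case your plan of showing $|\varphi,\beta|=|i^+,\beta|$ is compatible with all of $\cM$ cannot succeed. Since $\gamma'\prec i^+\preceq\suc(i)\preceq\beta$, Lemma~\ref{lem:key}(1) gives $\Ext^1(k_{|\gamma',i|},k_{|i^+,\beta|})\neq0$. This is a genuine incompatibility, and it is exactly why Lemma~\ref{lem:bound_int}(2) has its second alternative.

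Your proposed rescue is also false. For every $\gamma\succ i^+$, the intervals $|\gamma',i|$ and $|\gamma,\beta|$ \emph{are} compatible: their union misses the points strictly between $i$ and $\|\gamma\|$, and neither inequality chain of Lemma~\ref{lem:key}(1) holds. So the intervals you have in hand are locally consistent and yield no contradiction. The contradiction has to come from maximality producing a \emph{new} member $|\delta_0,\beta|\in\cM$ with $\delta_0\preceq i$, which would contradict $\varphi(\beta)=i^+$.

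\textbf{The missing idea is induction on $\|\beta\|$, which is how the paper argues.} Since $|\gamma',i|\in\cM$ and $i<\|\beta\|$, the inductive hypothesis says $\varphi(i)=\inf\{\gamma:|\gamma,i|\in\cM\}$ is attained, so $|\varphi(i),i|\in\cM$. One then checks that $|\varphi(i),\beta|$ is compatible with every $J=|\delta,\varepsilon|\in\cM$.
\begin{enumerate}
\item Suppose $\delta\prec\varphi(i)\preceq\suc(\varepsilon)\preceq\beta$.
If $\suc(\varepsilon)\preceq i$, then $J$ clashes with $|\varphi(i),i|$.
If $\varepsilon=i$, this is impossible by minimality of $\varphi(i)$.
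If $\suc(\varepsilon)\succ i^+$, then $J$ clashes with some $|x,\beta|$, $x\in(i,i+1)$.
\item Suppose $\varphi(i)\prec\delta\preceq\suc(\beta)\preceq\varepsilon$.
If $\delta\preceq i^+$, then $J$ clashes with $|\varphi(i),i|$.
If $\delta\succ i^+$, then $J$ clashes with some $|x,\beta|$.
\end{enumerate}
Maximality then forces $|\varphi(i),\beta|\in\cM$. Since $\varphi(i)\preceq i\prec i^+$, this contradicts $\varphi(\beta)=i^+$.

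Minimality of $\varphi(i)$ is essential here. With an arbitrary $\gamma'$, any $|\delta,i|\in\cM$ with $\delta\prec\gamma'$ is incompatible with $|\gamma',\beta|$. That is why the statement must first be proved for smaller integer right endpoints. The base case is the smallest integer of $\bI^*_{m,n}$: there no $|\gamma',i|$ can exist, and Lemma~\ref{lem:bound_int}(2) gives attainment directly.
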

\begin{proof}
We prove by induction on $\|\beta\|$. Assume $\|\beta\|$ is the smallest integer in the interval $\bI=\bI_{m,n}^*$. In case $\bI=\bI_{m,n}$ or $\bI_{m,n}^-$, we have $\beta=m$ and there is only one possible interval  $|\gamma,\beta|=|m,m|\in\cM$ with right endpoint $\|\beta\|=m$. So $\varphi(\beta)=m$ and $|m,m|\in\cM$. In case $\bI=\bI_{m,n}^+$ or $\bI_{m,n}^\pm$, $\|\beta\|=m+1$. According to the interval consistency of $\cM$, the existence of the interval $|\gamma,\beta|$ implies intervals $|x,\beta|$ and $|x^+,\beta|\in\cM$ for all $x\in(m,m+1)$. Thus, $\varphi(\beta)=m^+$. It follows from  Lemma \ref{lem:bound_int} (2) that $|m^+,\beta|\in\cM$. The statement holds for both cases.

Suppose the statement holds for all $m\leq\|\beta\|\leq i$. Now consider $\|\beta\|=i+1$. If $|\varphi(\beta), \beta|\notin\cM$, then there is a limit sequence $\alpha\to \varphi(\beta)$ such that $|\alpha,\beta|\in\cM$. From the interval consistency  of $\cM$, it forces $\varphi(\beta)=l^+$ for some integer $l$ and for each $x\in (l,l+1)$, intervals $|x,\beta|$ and $|x^+,\beta|\in\cM$. According to Lemma \ref{lem:bound_int}  (2), since $|l^+,\beta|\notin\cM$, there exists an interval $|\gamma, l|\in\cM$.
Since $l\leq i$, by the inductive hypothesis, we know the interval $|\varphi(l),l|\in\cM$. We claim that $|\varphi(l),\beta|\in\cM$, and hence $\varphi(l)\preceq l\prec l^+=\varphi(\beta)$ contradicts with $\varphi(\beta)$ being the infimum.

\vskip5pt
Proof of the claim $|\varphi(l),\beta|\in\cM$:
\vskip5pt

Due to the maximality of $\cM$, it suffices to show that $|\varphi(l),\beta|$  is  compatible with any interval in $\cM$. Suppose otherwise, there is $J=|\delta,\varepsilon|\in\cM$ incompatible with $|\varphi(l),\beta|$. Then either $\delta\prec \varphi(l)\preceq \suc(\varepsilon)\preceq \beta$ or $\varphi(l)\prec \delta\preceq \suc(\beta)\preceq \varepsilon$.  We will conclude with contradictions.

In the former case, (i) when $\suc(\varepsilon)\preceq l$, we have $\delta\prec \varphi(l)\preceq \suc(\varepsilon)\preceq l$. So $J=|\delta,\varepsilon|\in\cM$ is incompatible with $|\varphi(l),l|\in\cM$, contradicts with the compatibility of $\cM$.
(ii) When $\suc(\varepsilon)=l^+$, i.e. $\varepsilon=l$. The interval $J=|\delta,\varepsilon|\in\{\gamma:|\gamma,l|\in\cM\}$. But then $\delta\prec\varphi(l)$ contradicts with the fact that $\varphi(l)$ being the infimum of this set.
(iii) When $\suc(\varepsilon)\succ l^+$, $\suc(\varepsilon)\succ x$ for some $x\in(l,l+1)$.  We have $\delta\prec x\preceq\suc(\varepsilon)\preceq \beta$. So $|\delta,\varepsilon|$ is incompatible with $|x,\beta|\in\cM$, contradicts with the compatibility of $\cM$ again.

In the latter case, (i) when $\delta\preceq l^+$, it follows that $\varphi(l)\prec \delta\preceq l^+=\varphi(\beta)\prec\suc(\beta)\preceq\varepsilon$. Hence $J=|\delta,\varepsilon|$ is incompatible with $|\varphi(l),l|\in\cM$.
(ii) When $\delta\succ l^+$, it follows that $\delta\succ x$ for some $x\in(l,l+1)$. Thus, $x\prec\delta\preceq \suc(\beta)\preceq \varepsilon$ and hence $J=|\delta,\varepsilon|$ is incompatible with $|x,\beta|\in\cM$. Both cases arrive at contradictions, which finishes the proof.
\end{proof}

We list the dual statements of the these lemmas and leave the proof for the reader.
 \begin{lem}\label{lem:bound_int_dual}
 Let $\cM$ be an MCIC set on $\bI_{m,n}^*$. Assume there exists an interval $|\alpha, \beta| \in\cM$, where $i^+\prec \beta \prec (i+1)^-$, for some $m\leq i<n$. 
 \begin{enumerate}
 \item  If $\alpha\preceq i$,  then $|\alpha,i| \in\cM$.  
 \item  Either $|\alpha, (i+1)^-|\in\cM$, or there exists $|i+1,\gamma|\in\cM$. 
 \end{enumerate}   
 \end{lem}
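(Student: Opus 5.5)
The plan is to mirror the proof of Lemma \ref{lem:bound_int}, reversing the roles of left and right endpoints. The order-reversing duality $x\mapsto -x$ does not literally preserve the setup, because it exchanges closed and open endpoint decorations and sends $\bI^{+}_{m,n}$ to $\bI^{-}_{-n,-m}$. So I will redo the argument directly, with the criterion of Lemma \ref{lem:key}(1) as the main tool.

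First I will pin down the shape of the intervals in question. Since $\cM$ is interval consistent and $\|\beta\|\in(i,i+1)$ is non-integral, the open interval $(i,i+1)$ must be a lower bound consistent covering with $\alpha=\alpha_i$. By (C1), $|\alpha,\beta'|\in\cM$ for every $i^+\prec\beta'\prec(i+1)^-$. Also $\alpha$ must be $j$ or $j^+$ for some integer $j$, because $\alpha$ is half-integral and (C3) forbids non-integral left endpoints in $(i,i+1)$.

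For (1), assume $\alpha\preceq i$. By Lemma \ref{lem:key}(3) applied to $I=|\alpha,\beta|\in\cM$, it suffices to check that $|\alpha,i|$ is compatible with every interval in $\cM_{|\alpha,\beta|}$. By consistency, intervals contained in $|\alpha,\beta|$ fall into two types. The first type is contained in $|\alpha,i|$, and such intervals are compatible with $|\alpha,i|$ by containment. The second type has a right endpoint $\beta'$ with $i^+\prec\beta'\preceq\beta$. Since (C3) rules out left endpoints in $(i,i+1)$, and (C2) forces the left endpoint of such an interval to be $\alpha$, these are exactly the intervals $|\alpha,\beta'|$ with $i^+\prec\beta'\preceq\beta$. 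Each of them contains $|\alpha,i|$. Compatibility then follows, and the maximality of $\cM$ (via Lemma \ref{lem:max_rig_ic}, or directly) gives $|\alpha,i|\in\cM$.

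A small point needs care here. An interval in $\cM_{|\alpha,\beta|}$ with left endpoint $i^+$ and right endpoint in $(i,i+1)$ would contradict (C2), since its left endpoint must equal $\alpha\preceq i$. Similarly, the intervals ending at $i^+$-type points are exactly those handled by (C1)/(C2).

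For (2), assume no interval $|i+1,\gamma|$ lies in $\cM$, and show that $|\alpha,(i+1)^-|$ is compatible with all of $\cM$. Suppose $J=|\delta,\varepsilon|\in\cM$ is incompatible with it. Then either $\delta\prec\alpha\preceq\suc(\varepsilon)\preceq(i+1)^-$, or $\alpha\prec\delta\preceq\suc((i+1)^-)=i+1\preceq\varepsilon$.

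In the first case, $\suc(\varepsilon)\preceq(i+1)^-$ implies $\|\varepsilon\|<i+1$. We can then pick $\beta'$ with $i^+\prec\beta'\prec(i+1)^-$ and $\suc(\varepsilon)\preceq\beta'$, which makes $J$ incompatible with $|\alpha,\beta'|\in\cM$.

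In the second case, $\delta\preceq i+1$, and by hypothesis $\delta\neq i+1$, so $\delta\prec i+1$. Because there are no left endpoints in $(i,i+1)$ by (C3), we get $\delta\preceq i^+$. Choosing $\beta'\in(i,i+1)$ with $\delta\preceq\suc(\beta')$ gives $\alpha\prec\delta\preceq\suc(\beta')\preceq\varepsilon$, so $J$ is again incompatible with $|\alpha,\beta'|$.

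Both cases contradict compatibility, so maximality yields $|\alpha,(i+1)^-|\in\cM$. The only delicate step is the endpoint bookkeeping with decorations. In particular I must check that $\suc((i+1)^-)=i+1$ is the right successor, and that a left endpoint $\delta=i^+$ is handled correctly: in that case $\delta\preceq\suc(\beta')$ holds for any $\beta'\in(i,i+1)$.
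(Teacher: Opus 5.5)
Your proof is correct and is exactly the mirror image of the paper's argument for Lemma \ref{lem:bound_int}, which is what the paper means by leaving this dual statement to the reader: part (1) goes via Lemma \ref{lem:key}(3) applied to $\cM_{|\alpha,\beta|}$, and part (2) plays a putative $J$ incompatible with $|\alpha,(i+1)^-|$ off against the family $|\alpha,\beta'|$, $\beta'\in(i,i+1)$. Two small remarks: the reflection $x\mapsto -x$ combined with pointwise vector-space duality does transport the whole setup (it only permutes the four interval types, all of which Lemma \ref{lem:bound_int} covers, and it swaps the decorations $+$ and $-$ rather than closed and open); and since (C2) as literally written only constrains intervals $|\alpha',y|$ with a closed right endpoint, for right endpoints $y^-$ (including possibly your given $|\alpha,\beta|$) you need the one-line observation that compatibility with the intervals $|\alpha_i,z|\in\cM$ for suitable $z\in(i,i+1)$ forces $\alpha'=\alpha_i$, which is the same implicit step the paper takes.
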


  \begin{lem}\label{lem:inf_int_dual}
 Let $\cM$ be a MCIC set on $\bI_{m,n}^*$. If there is at least one interval $|\alpha,\gamma|\in\cM$ with  $\|\alpha\|\in\bZ$, then $|\alpha,\psi(\alpha)|$ is an integral interval in $\cM$, where $\psi(\alpha)=\sup\{\gamma\in\bR^\pm: |\alpha,\gamma|\in\cM\}$.
 \end{lem}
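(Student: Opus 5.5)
We prove Lemma \ref{lem:inf_int_dual} by dualizing the proof of Lemma \ref{lem:inf_int}: the roles of left and right endpoints are exchanged, infima become suprema, and $\suc$ is exchanged with $\pre$. We argue by downward induction on $\|\alpha\|$.

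\emph{Base case.} Let $\|\alpha\|$ be the largest integer in $\bI_{m,n}^*$.
\begin{itemize}
\item If $\bI=\bI_{m,n}$ or $\bI_{m,n}^+$, then $\alpha=n$. The only admissible interval starting at $n$ is $|n,n|$, so $\psi(\alpha)=n$ and $|n,n|\in\cM$.
\item If $\bI=\bI^-_{m,n}$ or $\bI^\pm_{m,n}$, then $\|\alpha\|=n-1$. Every interval $|\alpha,\gamma|\in\cM$ must have $\|\gamma\|\in(n-1,n)$, since $\gamma=n-1$ is excluded because the interval would then be integral. By interval consistency, $\cM$ is a lower bound consistent covering of $(n-1,n)$ with $\alpha_{n-1}=\alpha$. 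Hence $|\alpha,x|,|\alpha,x^-|\in\cM$ for all $x\in(n-1,n)$, and $\psi(\alpha)=n^-$. Lemma \ref{lem:bound_int_dual}(2) gives either $|\alpha,n^-|\in\cM$ or some $|n,\gamma|\in\cM$. The latter is impossible because $n\notin\bI$. So $|\alpha,n^-|\in\cM$.
\end{itemize}
If the only intervals starting at $\alpha$ are integral, the supremum runs over finitely many candidates and is attained, so there is nothing to prove. This observation also covers degenerate base cases.

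\emph{Inductive step.} Assume the statement holds for all integers larger than $\|\alpha\|$, and suppose $|\alpha,\psi(\alpha)|\notin\cM$. Then the supremum is not attained, so it is a limit of right endpoints $\gamma\to\psi(\alpha)$ with $|\alpha,\gamma|\in\cM$. These $\gamma$ are non-integral, and lower bound consistency (C1)--(C2) forces $\psi(\alpha)=l^-$ for some integer $l$. Moreover, $|\alpha,x|,|\alpha,x^-|\in\cM$ for all $x\in(l-1,l)$.

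Since $|\alpha,l^-|\notin\cM$, Lemma \ref{lem:bound_int_dual}(2) yields some $|l,\gamma|\in\cM$. As $l>\|\alpha\|$, the inductive hypothesis gives $|l,\psi(l)|\in\cM$. We then claim that $|\alpha,\psi(l)|\in\cM$. Since $\psi(l)\succeq l\succ l^-$, this contradicts the definition of $\psi(\alpha)$.

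\emph{Proof of the claim.} By maximality of $\cM$ (and Lemma \ref{lem:max_rig_ic}), it suffices to show that $|\alpha,\psi(l)|$ is compatible with every $J=|\delta,\varepsilon|\in\cM$. By Lemma \ref{lem:key}(1), an incompatibility means one of two things.

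First possibility: $\delta\prec\alpha\preceq\suc(\psi(l))\preceq\varepsilon$. Then $J$ is incompatible with some $|\alpha,x|\in\cM$, which contradicts compatibility of $\cM$.

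Second possibility: $\alpha\prec\delta\preceq\suc(\psi(l))\preceq\varepsilon$. We split according to the position of $\delta$.
\begin{itemize}
\item If $\delta\succeq l^+$, then $J$ is incompatible with $|l,\psi(l)|$.
\item If $\delta=l$, then $\varepsilon\succeq\suc(\psi(l))\succ\psi(l)$, contradicting $\psi(l)$ being the supremum.
\item If $\delta\preceq l^-$, then $\delta\preceq x$ for some $x\in(l-1,l)$ with $\alpha\prec\delta$. This makes $J$ incompatible with $|\alpha,x^-|$ or $|\alpha,x|$, by choosing $x$ with $\delta\preceq\suc(x^-)=x$.
\end{itemize}

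\emph{Main obstacle.} The main work lies in this case analysis, specifically the boundary cases where $\delta$ or $\suc(\psi(l))$ sits exactly at $l^-,l,l^+$. Each such case must be matched with the correct witness among $|\alpha,x|$, $|\alpha,x^-|$, and $|l,\psi(l)|$. We would carefully mirror cases (i)--(iii) of Lemma \ref{lem:inf_int}, checking the $\suc$/$\pre$ bookkeeping in each.
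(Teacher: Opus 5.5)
Your strategy is the same as the paper's: you dualize the proof of Lemma \ref{lem:inf_int} via downward induction on $\|\alpha\|$, use Lemma \ref{lem:bound_int_dual}(2) to produce some $|l,\gamma|\in\cM$, and show $|\alpha,\psi(l)|\in\cM$ by checking compatibility with every interval of $\cM$. Your second possibility, $\alpha\prec\delta\preceq\suc(\psi(l))\preceq\varepsilon$, is handled correctly. The gap is in the first possibility, which fails as written.

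By Lemma \ref{lem:key}(1), the other way $J=|\delta,\varepsilon|$ can clash with $|\alpha,\psi(l)|$ is $\Ext^1(k_J,k_{|\alpha,\psi(l)|})\neq 0$. This means $\delta\prec\alpha\preceq\suc(\varepsilon)\preceq\psi(l)$. The inequality you wrote, $\delta\prec\alpha\preceq\suc(\psi(l))\preceq\varepsilon$, only says $J\supsetneq|\alpha,\psi(l)|$, which is no obstruction.

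More importantly, your witness $|\alpha,x|$ does not always work even for the correct inequality.
\begin{itemize}
\item If $\|\varepsilon\|<l$, pick $x\in(l-1,l)$ with $x>\|\varepsilon\|$. Then $\delta\prec\alpha\preceq\suc(\varepsilon)\preceq x$, so $J$ clashes with $|\alpha,x|$, as you say.
\item If $\varepsilon\succeq l^-$ (for example $J=|\delta,l^-|$ or $J=|\delta,l|$ with $\delta\prec\alpha$), then $J$ contains every $|\alpha,x|$ and $|\alpha,x^-|$ and is compatible with all of them. The contradiction must come from $|l,\psi(l)|$. Indeed $\delta\prec\alpha\prec l\preceq\suc(\varepsilon)\preceq\psi(l)$, which gives $\Ext^1(k_J,k_{|l,\psi(l)|})\neq 0$.
\end{itemize}
This split is exactly the dual of subcases (i) and (ii) in the ``latter case'' of the paper's proof of Lemma \ref{lem:inf_int}. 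Once you make it, the claim and hence the lemma go through.

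A smaller slip is in the base case. The sentence saying every $|\alpha,\gamma|\in\cM$ has $\|\gamma\|\in(n-1,n)$ is false: $|n-1,n-1|$, or $|\alpha,n^-|$ itself, may lie in $\cM$. What you actually need, and what your subsequent remark supplies, is weaker. If some non-integral $|\alpha,\gamma|$ lies in $\cM$, then lower bound consistency on $(n-1,n)$ forces $\psi(\alpha)=n^-$, because $n^-$ is the largest admissible right endpoint. Otherwise the supremum is taken over finitely many integral intervals and is attained.
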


The following observation reveals a crucial property of MCIC sets and its link to maximal compatible sets of integral intervals. It will be used later to build a correspondence between MRIC subcategories and tilting modules.  
 \begin{lem}\label{lem:m_int}
Let $\cM$  be an interval consistent set on $\bI_{m,n}^*$. Then $\cM$ is MCIC if and only if  $\widehat\cM:=\cM\cap\widehat\Intv(\bI_{m,n}^*)$ is a maximal compatible set of integral intervals.
 \end{lem}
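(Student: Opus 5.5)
The plan is to prove the two implications separately. The forward direction uses the integral intervals produced by Lemmas \ref{lem:bound_int}--\ref{lem:inf_int_dual}. The converse uses the rigidity of the non-integral part built into Definition \ref{def:int_consist}. For the converse I read the hypothesis as including that $\cM$ is compatible, since MCIC presupposes compatibility; I would state this explicitly in the write-up.

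\emph{($\Rightarrow$).} Assume $\cM$ is MCIC and let $J=|\mu,\nu|$ be an integral interval compatible with every interval of $\widehat\cM$. It suffices to show that $J$ is compatible with every non-integral interval of $\cM$. Then $\cM\cup\{J\}$ is compatible, and it is still interval consistent because interval consistency only constrains $\cM\setminus\widehat\cM$. Maximality then gives $J\in\cM$, hence $J\in\widehat\cM$.

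So suppose $J$ is incompatible with some non-integral $K\in\cM$, and work on the open interval $(i,i+1)$ containing the non-integral endpoint of $K$. I treat a lower bound consistent covering; the upper case is dual, using Lemmas \ref{lem:bound_int} and \ref{lem:inf_int} in place of their duals. Here $K=|\alpha_i,x|$ or $K=|\alpha_i,x^-|$ with $x\in(i,i+1)$ and $\|\alpha_i\|\in\bZ$. By Lemma \ref{lem:key}(1) there are two cases.
\begin{itemize}
\item \textbf{Case $\alpha_i\prec\mu\preceq\suc(x)\preceq\nu$.} Since $\|\mu\|,\|\nu\|$ are integers, this forces $\|\mu\|\le i$ and $\nu\succeq i+1$; in particular $\alpha_i\prec\mu\preceq i$, so $\alpha_i\preceq i$. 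Lemma \ref{lem:bound_int_dual}(1) then gives $|\alpha_i,i|\in\widehat\cM$. Now $\alpha_i\prec\mu\preceq i^+=\suc(i)\preceq\nu$, so $J$ is incompatible with $|\alpha_i,i|$, a contradiction.
\item \textbf{Case $\mu\prec\alpha_i\preceq\suc(\nu)\preceq x$.} Here $\|\nu\|\le i$. Lemma \ref{lem:inf_int_dual} gives an integral interval $|\alpha_i,\psi(\alpha_i)|\in\widehat\cM$. Since all $|\alpha_i,y^-|$ with $y\in(i,i+1)$ lie in $\cM$, we have $\psi(\alpha_i)\succeq(i+1)^-\succeq\suc(\nu)$. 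Hence $\mu\prec\alpha_i\preceq\suc(\nu)\preceq\psi(\alpha_i)$, and $J$ is incompatible with this interval of $\widehat\cM$, again a contradiction.
\end{itemize}
The variant $K=|\alpha_i,x^-|$ is handled identically.

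\emph{($\Leftarrow$).} Let $\cM\subseteq\cM'$ with $\cM'$ compatible and interval consistent. On each $(i,i+1)$, $\cM'$ contains the whole covering family of $\cM$. By the uniqueness clauses (C2),(C3) (resp. (C2'),(C3')), $\cM'$ must be a covering of the same type with the same $\alpha_i$ (resp. $\beta_i$). Therefore $\cM'\setminus\widehat{\cM'}=\cM\setminus\widehat\cM$. I need to check that the conditions also exclude extra intervals of the form $|\alpha,x^-|$ with $\alpha\neq\alpha_i$. I would do this via compatibility: such an interval is incompatible with $|\alpha_i,y|$ for a suitable $y$ near $x$, using Lemma \ref{lem:key}(1). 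On the integral side, $\widehat{\cM'}\supseteq\widehat\cM$ is compatible, so it equals $\widehat\cM$ by maximality. Hence $\cM'=\cM$, and $\cM$ is MCIC.

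The main obstacle is the bookkeeping of decorations in the forward direction: the cases where $\alpha_i=i^+$, so that $|\alpha_i,i|$ is empty, and the $x$ versus $x^-$ variants. I would settle these by checking directly that the first incompatibility case cannot occur when $\alpha_i=i^+$, because $\mu\succ i^+$ contradicts $\|\mu\|\le i$.
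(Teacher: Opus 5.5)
Your proof is correct. The forward direction is the paper's argument with the two covering types interchanged. The paper treats a non-integral interval whose non-integer endpoint is on the left, using Lemma~\ref{lem:bound_int}(1) and Lemma~\ref{lem:inf_int}, and leaves the other case to the duals. You treat the lower bound case with Lemma~\ref{lem:bound_int_dual}(1) and Lemma~\ref{lem:inf_int_dual}. There is a harmless slip in your Case~1: you only get $\mu\preceq i^+$ and $\nu\succeq(i+1)^-$, not $\mu\preceq i$ and $\nu\succeq i+1$. Still, $\alpha_i\preceq i$ and $\alpha_i\prec\mu\preceq i^+\preceq\nu$ follow, so nothing breaks.

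The converse is where you take a genuinely different route. The paper lets $J$ be any interval compatible with all of $\cM$ and finishes in two lines. A non-integral $J$ lies in $\cM$ by the first assertion of Lemma~\ref{lem:max_rig_ic}, which holds for every interval consistent set. An integral $J$ lies in $\widehat\cM$ by maximality. You instead fix a compatible interval consistent superset $\cM'$. Using (C1)--(C3), plus a compatibility check excluding stray intervals $|\alpha,x^-|$ with $\alpha\neq\alpha_i$ (and dually $|x^+,\beta|$ with $\beta\neq\beta_i$), you show the non-integral parts of $\cM$ and $\cM'$ agree. Maximality of $\widehat\cM$ then forces the integral parts to agree. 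Your version works directly from the definition of MCIC and avoids Lemma~\ref{lem:max_rig_ic}, but it only gives maximality among interval consistent sets. The paper's is shorter and yields at once that $\cM$ is maximal compatible in $\Intv(\bI_{m,n}^*)$; the two conclusions are equivalent in the end by Lemma~\ref{lem:max_rig_ic}.

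You are also right to make compatibility of $\cM$ an explicit hypothesis in the converse. The paper's proof uses it tacitly, and it cannot be dropped. On $[0,1]$, the set $\{[0,0],[0,1],[1,1]\}\cup\{(0,x],(0,x):0<x<1\}$ is consistent below with maximal compatible integral part, yet $(0,x]$ and $[0,0]$ are incompatible.
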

 \begin{proof}
``If'' part: Assume $J$ is an interval compatible with all intervals in $\cM$. If $J$ is non-integral, then $J\in\cM$ by Lemma \ref{lem:max_rig_ic}. If $J$ is integral, then $J$ is compatible with $\widehat\cM$, which is maximal. Thus, $J\in\widehat\cM\subseteq\cM$ again.  Hence $\cM$ is maximal compatible.
 
``Only if'' part: Assume $I=|\mu,\nu|$ is an integral interval compatible with all intervals in $\widehat\cM$. We need to show that $I\in\widehat\cM$. Suppose otherwise $I\notin\widehat\cM$. Then $I$ must be incompatible with some half-integral intervals in $\cM$. Without loss of generality, assume $I$ is incompatible with $J=|\alpha,\beta|\in\cM$ for $i^+\prec\alpha\prec (i+1)^-$. Then either $\mu<\alpha\preceq\suc(\nu)\preceq \beta$ or $\alpha\prec \mu\preceq\suc(\beta)\preceq \nu$. To obtain a contradiction, we need to show that $I$ is incompatible with some interval in $\widehat\cM$.

 In the former case, because $\|\nu\|\in\bZ$, $\mu<\alpha\preceq\suc(\nu)\preceq \beta$ implies that $\mu\prec\alpha\prec (i+1)^-\preceq\suc(\nu)\preceq \beta$. Because $\suc(\nu)\neq(i+1)^-$, it follows that $\mu\prec\alpha\prec i+1\preceq\suc(\nu)\preceq \beta$. In particular, $\beta\succeq i+1$. It follows from Lemma \ref{lem:bound_int} (1) that $|i+1,\beta|\in\widehat\cM$ and the inequality shows that it is incompatible with $I$, which is a contradiction.

 In the latter case, $\alpha\prec \mu\preceq\suc(\beta)\preceq \nu$ implies that $\varphi(\beta)\prec\alpha\prec \mu\preceq\suc(\beta)\preceq \nu$, where $\varphi(\beta)=\inf\{\gamma: |\gamma,\beta|\in\cM\}$. According to Lemma \ref{lem:inf_int}, $|\varphi(\beta),\beta|\in\widehat\cM$ and the inequality implies that it  is  incompatible with $I$, which is again a contradiction.

One can prove for the case $i^+\prec\beta\prec (i+1)^-$ similarly, by Lemma \ref{lem:bound_int_dual} and Lemma \ref{lem:inf_int_dual}.
 \end{proof}

\begin{rmk}\label{rmk:nonbij}
 We warn the reader that if $\cM$ is not interval consistent on $(\bI,\bm\lambda)$,  $\widehat\cM$ may not be maximal compatible! For example, considering $\bI=(0,1)$ with critical vector $\bm\lambda=(0^+,\frac{1}{3},\frac{2}{3},1^-)$ and $\cM$ be the maximal compatible set in Example \ref{exm:1}, $ \widehat\cM=\{(0,1), [\frac{1}{3}, \frac{1}{3}], [\frac{2}{3}, \frac{2}{3}]\}$, which is compatible but not maximal compatible in $\widehat\Intv(\bI,\bm\lambda)$.
\end{rmk}

Next, we will discuss how to complete a maximal compatible set of integral intervals into a MCIC set. The key ingredients for such completions are those smallest intervals, which we call ``roofs''.

\begin{lem-def}\label{lemdef:exist1}
Let $\cN$ be a maximal compatible set of integral intervals on $\bI_{m,n}^*$. 
For each open interval $(i,i+1)$, $i=m,m+1,\cdots,n-1$, there exists a smallest (with respect to containment) interval  $|\sigma_i,\tau_i|\in\cN$ which contains $(i,i+1)$. Furthermore, the intervals $|\sigma_i,\tau_i|$, $i=m,m+1,\cdots, n-1$, are mutually distinct. We call $|\sigma_i,\tau_i|$ the {\bf roof} of $(i,i+1)$.
\end{lem-def}
\begin{proof}
Notice that $\cN$ is a finite set containing the largest interval $\bI_{m,n}^*$. Thus for each $(i,i+1)$ there exists at least one minimal element $I\in\cN$ which contains $(i,i+1)$. If $I=|\sigma_i,\tau_i|$ and  $I'=|\sigma'_i,\tau'_i|$ were two distinct minimal elements in $\cN$ containing $(i,i+1)$, then they don't contain each other. In order to be mutually compatible, they must be disjoint, which is a contradiction. Thus, there is a smallest interval $|\sigma_i,\tau_i|$ containing $(i,i+1)$.

Assume $|\sigma_i,\tau_i|=|\sigma_j,\tau_j|$ for some $m\leq i<j<n$. We claim that $J=|\sigma_i,j|$ would be an integral interval compatible with all intervals in $\cN$. But $J\notin \cN$ due to the minimality of $|\sigma_i,\tau_i|$. Hence it contradicts with the maximality of $\cN$.
\begin{figure}[h]
    \centering
    \begin{tikzpicture}
         \draw[-{To[scale=1.3]},line width=1pt, color=black] (-3.4, 0) -- (3.4, 0);
    \foreach \x in {-3,-2,-1,1,2,3}
        \draw[shift={(\x,0)},color=black, line width=1pt] (0pt,0pt) -- (0pt,5pt);
    \draw[shift={(-3,-0.4)},color=black] node[font=\large,text height=12pt] {$\sigma_i$};
    \draw[shift={(-2,-0.4)},color=black] node[font=\large,text height=12pt] {$i$};
     \draw[shift={(-1,-0.4)},color=black] node[font=\large,text height=12pt] {$i+1$};
    \draw[shift={(1,-0.4)},color=black] node[font=\large,text height=12pt] {$j$};
        \draw[shift={(2,-0.4)},color=black] node[font=\large,text height=12pt] {$j+1$};
    \draw[shift={(3,-0.4)},color=black] node[font=\large,text height=12pt] {$\tau_i$};

  \draw (-3,0)  .. controls  +(north:18 mm) and +(north:18 mm) ..(3,0);
\draw (-3,0)  .. controls  +(north:5 mm) and +(north:5 mm) ..   (-2,0);
\draw (2,0)  .. controls  +(north:5 mm) and +(north:5 mm) ..   (3,0);
\draw (-1,0)  .. controls  +(north:5 mm) and +(north:5 mm) ..   (1,0);
 \draw (-3,0)  .. controls  +(north:10 mm) and +(north:10 mm) .. node[above=1pt,text height=14pt,font=\small]{$J$} (1,0);

    \end{tikzpicture}
 \caption{}
    \label{fig:sigma_tau}
\end{figure}

To prove the claim, notice that by the minimality of $|\sigma_i,\tau_i|=|\sigma_j,\tau_j|$, for any interval $\cN\ni K\subseteq |\sigma_i,\tau_i|$, $K$ cannot contain the interval $(i,i+1)$ or $(j,j+1)$. So $K$ must be contained in either $|\sigma_i,i|$ or $|i+1, j|$ or $|j+1,\tau_i|$. Thus $J$ is compatible with all $\cN\ni K\subseteq |\sigma_i,\tau_i|$. Hence by Lemma \ref{lem:key} (3), $J$ is compatible with all intervals in $\cN$. This finishes the proof.
\end{proof}

\begin{lem}\label{lem:exist2}
Let $\cN$ be a maximal compatible set of integral intervals on $\bI_{m,n}^*$. Let  $|\sigma_i,\tau_i|\in\cN$ be the roof of $(i,i+1)$, for $i=m,m+1,\cdots,n-1$. For $m\leq i<j<n$, the following holds.
\begin{enumerate}
\item $\{|x,\tau_i|, |x^+,\tau_i|, |y,\tau_j|,|y^+,\tau_j|: x\in(i,i+1), y\in(j,j+1)\}$ is compatible;
\item $\{|x,\tau_i|, |x^+,\tau_i|, |\sigma_j,y|,|\sigma_j,y^-|: x\in(i,i+1), y\in(j,j+1)\}$ is compatible; 
\item $\{|\sigma_i,x|, |\sigma_i,x^-|, |y,\tau_j|,|y^+,\tau_j|: x\in(i,i+1), y\in(j,j+1)\}$ is compatible; 
\item $\{|\sigma_i,x|, |\sigma_i,x^-|, |\sigma_j,y|,|\sigma_j,y^-|: x\in(i,i+1), y\in(j,j+1)\}$ is compatible; 
\item  $\{|x,\tau_i|, |x^+,\tau_i|: x\in(i,i+1)\}\cup \cN$ is compatible;
\item  $\{|\sigma_i,x|, |\sigma_i,x^-|: x\in(i,i+1)\}\cup \cN$ is compatible.
\end{enumerate}
\end{lem}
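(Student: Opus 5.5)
Every statement is a pairwise compatibility check, and Lemma \ref{lem:key}(2) reduces it to geometry: two intervals are mutually compatible iff one contains the other or their union is not an interval. Two intervals in the same family, e.g. $|x,\tau_i|$ and $|x'^+,\tau_i|$, share a right endpoint and are nested, and likewise for the families $|\sigma_i,x|,|\sigma_i,x^-|$, which share a left endpoint. So only cross pairs need checking. The plan is to first record the structural relation between the roofs $|\sigma_i,\tau_i|$ and $|\sigma_j,\tau_j|$ for $i<j$. Both lie in the compatible set $\cN$ and are distinct by Lemma-Definition \ref{lemdef:exist1}. Hence they are either nested or their union is not an interval, i.e. they are disjoint and separated: $\tau_i\prec\pre$-wise before $\sigma_j$, so that $\suc(\tau_i)\prec\sigma_j$ fails to touch. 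They cannot be disjoint but adjacent in the bad sense, since that is an incompatibility.

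For (1)--(4), split into three cases. In the \emph{separated case}, $\tau_i\preceq i'$ for some integer with $|\sigma_i,\tau_i|$ and $|\sigma_j,\tau_j|$ having non-interval union. Every interval from the $i$-family lies inside $|\sigma_i,\tau_i|$ and every interval from the $j$-family lies inside $|\sigma_j,\tau_j|$, so their unions are again not intervals. In the \emph{nested case} $|\sigma_j,\tau_j|\subseteq|\sigma_i,\tau_i|$, minimality of the roof of $(i,i+1)$ forces $(i,i+1)\not\subseteq|\sigma_j,\tau_j|$. Since $j>i$ and $(j,j+1)\subseteq|\sigma_j,\tau_j|$, we get $\sigma_j\succeq i+1$, so the whole $j$-family lives in $|i+1,\tau_j|$. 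Then:
\begin{enumerate}
\item An interval $|x,\tau_i|$ or $|x^+,\tau_i|$ with $x\in(i,i+1)$ contains $|\sigma_j,\tau_j|$, hence contains every $j$-family interval (cases (1),(2)).
\item An interval $|\sigma_i,x|$ or $|\sigma_i,x^-|$ ends before $i+1\preceq\sigma_j$. Its union with any $j$-family interval is therefore not an interval, since a gap containing at least a point of $(x,i+1]$ (or the point $x$ itself) is left uncovered (cases (3),(4)).
\end{enumerate}
The reverse nesting $|\sigma_i,\tau_i|\subseteq|\sigma_j,\tau_j|$ is symmetric: minimality gives $\tau_i\preceq j$. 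Now the $i$-family intervals lie in $|\sigma_i,j|$, and the $j$-family intervals either contain them (if of the form $|\sigma_j,y|$) or start at $y>j$, leaving a gap.

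For (5) and (6), the plan is to use Lemma \ref{lem:key}(3) with $I=|\sigma_i,\tau_i|$. Each new interval $K=|x,\tau_i|$ or $|x^+,\tau_i|$ is contained in $I$, so it suffices to check compatibility with $\cN_I$. By minimality of the roof, any $L\in\cN_I$ other than $I$ does not contain $(i,i+1)$. Since $L$ is integral, $L\subseteq|\sigma_i,i|$ or $L\subseteq|i+1,\tau_i|$. In the first case the union with $K$ is not an interval, because $K$ starts after $i$. In the second case $L\subseteq K$. Finally $K\subseteq I$ handles $I$ itself. The new intervals are mutually compatible by the nesting remark above. Statement (6) is the mirror argument.

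The main obstacle is the gap bookkeeping in decorated notation: verifying for instance that $|\sigma_i,x^-|$ and an interval starting at $\sigma_j=i+1$ really have non-interval union. Here I would check directly with Lemma \ref{lem:key}(1) that $\suc(x^-)=x\prec i+1$, so that the required inequality $\gamma\preceq\suc(\beta)$ fails.
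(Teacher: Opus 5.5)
Your proof is correct. For (5)--(6) it is the paper's argument: Lemma~\ref{lem:key}(3) is applied to the roof, and every other member of $\cN$ inside the roof lies in $|\sigma_i,i|$ or $|i+1,\tau_i|$.

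For (1)--(4) you organize things differently. The paper argues by contradiction with the algebraic criterion of Lemma~\ref{lem:key}(1). It assumes one representative pair is incompatible, say $|x,\tau_i|$ and $|y,\tau_j|$. From $x\prec y\preceq\suc(\tau_i)\preceq\tau_j$ it reads off that the roof of $(i,i+1)$ would contain $(j,j+1)$. This then contradicts either the minimality of the roof of $(j,j+1)$ or the compatibility of the two roofs. In (2), the other branch of the Ext inequality forces the two roofs to coincide, contradicting their distinctness. The paper calls (3) obvious, treats (4) as similar to (1), and says the remaining decorated pairs \emph{follow immediately}.

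You instead fix the relative position of the two roofs once. Either their union is not an interval, or they are strictly nested. In the nested case, minimality and distinctness push the inner roof to one side of an integer point: $\sigma_j\succeq i+1$, resp.\ $\tau_i\preceq j$. This step implicitly uses that the endpoints of members of $\cN$ have integer real value. You then check all four families with the geometric criterion of Lemma~\ref{lem:key}(2): each cross pair is either nested or separated by a gap.

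Your route costs an up-front trichotomy. In exchange it treats every decorated variant ($x$ versus $x^+$, $y$ versus $y^-$) and all four statements uniformly, with nothing left to ``similarly''. The paper's route is shorter per statement but checks only one pair explicitly.

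Two cosmetic points:
\begin{itemize}
\item Your wording of the separated case is garbled. What you actually use, and should state, is that the union of the two roofs is not an interval, equivalently $\suc(\tau_i)\prec\sigma_j$.
\item Statement (3) needs no case split: $|\sigma_i,x|$ ends at $x$ while $|y,\tau_j|$ starts at $y>x$, so there is always a gap.
\end{itemize}
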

\begin{proof}

(1) It is clear that both sets $\{|x,\tau_i|, |x^+,\tau_i|: x\in(i,i+1)\}$ and $\{|y,\tau_j|,|y^+,\tau_j|: y\in(j,j+1)\}$ are compatible.

If $|x,\tau_i|$ is incompatible with $|y,\tau_j|$, then $x\prec y\preceq \suc(\tau_i)\preceq \tau_j$. On one hand, $y\preceq \suc(\tau_i)$ implies that $\tau_i\succeq (j+1)^-$. On the other hand, $(i,i+1)\subseteq |\sigma_i,\tau_i|$ implies that $\sigma_i\prec x\prec j$. Thus, $\sigma_i\prec j\prec (j+1)^-\preceq \tau_i$, which means $(j,j+1)\subseteq|\sigma_i,\tau_i|$. If $\sigma_j\preceq\sigma_i$, then $|\sigma_i,\tau_i|\subsetneq |\sigma_j,\tau_j|$ is a smaller interval containing $(j,j+1)$, which is a contradiction. Hence, $\sigma_i\prec \sigma_j \preceq \suc(\tau_i)\preceq\tau_j$, contradicts with the compatibility of $|\sigma_i,\tau_i|$ and $|\sigma_j,\tau_j|$. Thus, $|x,\tau_i|$  and $|y,\tau_j|$ are compatible.

The compatibility of the rest pairs follows immediately.

(2) It is clear that both sets $\{|x,\tau_i|, |x^+,\tau_i|: x\in(i,i+1)\}$ and $\{|y,\tau_j|,|y^+,\tau_j|: y\in(j,j+1)\}$ are compatible.

If $|x,\tau_i|$ is incompatible with $|\sigma_j, y|$, then either  $x\prec \sigma_j\preceq \suc(\tau_i)\preceq y$ or $\sigma_j\prec x\preceq \suc(y)\preceq \tau_i$. In the former case, it follows that $\sigma_i \prec \sigma_j\preceq \suc(\tau_i)\preceq   \tau_j$, which implies that $|\sigma_i,\tau_i|$ and $|\sigma_j,\tau_j|$ are incompatible, a contradiction. In the latter case, it implies that both $|\sigma_i,\tau_i|$ and $|\sigma_j,\tau_j|$ contains intervals $(i,i+1)$ and $(j,j+1)$. Due to the minimality of roofs, it follows that $|\sigma_i,\tau_i|=|\sigma_j,\tau_j|$ is a roof of both $(i,i+1)$ and $(j,j+1)$. However, this contradicts with the fact that roofs are mutually distinct (Lemma-Definition \ref{lemdef:exist1}).

The compatibility of the rest pairs follows immediately.

Statement (3) is obvious and (4) can be proved similarly as (1).

(5) Notice that any integral interval $\cN\ni K\subseteq |\sigma_i,\tau_i|\}$ is contained in either $|\sigma_i,i|$ or $|i+1,\tau_i|$. Hence $|x,\tau_i|$ and $|x^+,\tau_i|$ are compatible with $K$. Therefore, by Lemma \ref{lem:key} (3), it implies that $|x,\tau_i|$ and $|x^+,\tau_i|$ are compatible with all intervals in $\cN$.

Statement (6) can be proved similarly as (5).
\end{proof}

\begin{rmk}\label{rmk:exist}
This lemma shows that starting from a maximal compatible set of integral intervals $\cN$, one can add half-integral intervals $\{|x,\tau_i|, |x^+,\tau_i|: x\in(i,i+1)\}$ or $\{|\sigma_i,x|, |\sigma_i,x^-|:x\in(i,i+1)\}$ to $\cN$ for each $m\leq i<n$, where $|\sigma_i,\tau_i|$ is the roof of $(i,i+1)$, such that the result $\cM$ is a MCIC set. Thus, there are at least $2^{n-m}$ ways one can complete $\cN$ into a MCIC subset and exactly one way one can complete it into a maximal rigid consistent above/below subset. We will show that these are all possible completions.
\end{rmk}

Recall from Lemma \ref{lem:m_int} that for an MCIC set $\cM$, $\widehat\cM$ is a maximal rigid set of integral intervals. So we can continue to use the notion of roofs for MCIC sets and call an interval $|\sigma_i,\tau_i|\in\cM$ a roof, if it is a roof in $\widehat\cM$.

\begin{lem}\label{lem:at_most_2}
Let $\cM$ be a MCIC set on $\bI_{m,n}^*$ containing $\cN$. Let $|\sigma_i,\tau_i|$ be the roof of $(i,i+1)$, for $i=m,m+1,\cdots,n-1$.
\begin{enumerate}
\item If there is an interval $|x,\tau|\in\cM$ for some $x\in (i,i+1)$,  then $\tau=\tau_i$.
\item If there is an interval $|\sigma,x|\in\cM$ for some $x\in (i,i+1)$,  then $\sigma=\sigma_i$.
\end{enumerate} 
\end{lem}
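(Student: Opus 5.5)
The plan is to prove (1) by squeezing $\tau_i$ from both sides, $\tau_i\preceq\tau$ and $\tau\preceq\tau_i$; (2) is the mirror image. Here $\cN=\widehat\cM$, which is a maximal compatible set of integral intervals by Lemma \ref{lem:m_int}. So the roof $|\sigma_i,\tau_i|$ of $(i,i+1)$ exists by Lemma-Definition \ref{lemdef:exist1}.

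First I record what interval consistency gives. Since $\cM$ contains $|x,\tau|$ with $x\in(i,i+1)$ non-integral, condition (C3) rules out $\cM$ being a lower bound consistent covering of $(i,i+1)$. Hence $\cM$ is an upper bound consistent covering there. By (C1') and (C2'), $\tau=\beta_i$ and $|y,\tau|,|y^+,\tau|\in\cM$ for every $y\in(i,i+1)$. Since $\cM$ consists of half-integral intervals and $\|x\|\notin\bZ$, we get $\|\tau\|\in\bZ$ with $\|\tau\|>x$. Therefore $\tau\succeq(i+1)^-$, so every $|y,\tau|$ contains $[y,i+1)$.

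\emph{Step 1: $\tau_i\preceq\tau$.} Apply Lemma \ref{lem:inf_int} with $\beta=\tau$. This shows that $|\varphi(\tau),\tau|$ is an integral interval in $\cM$, hence in $\cN$, where $\varphi(\tau)=\inf\{\gamma:|\gamma,\tau|\in\cM\}$. Because all $|y^+,\tau|$ with $y\in(i,i+1)$ lie in $\cM$, we have $\varphi(\tau)\preceq i^+$. Together with $\tau\succeq(i+1)^-$, this gives $(i,i+1)\subseteq|\varphi(\tau),\tau|$. The roof is the smallest interval of $\cN$ containing $(i,i+1)$, so $|\sigma_i,\tau_i|\subseteq|\varphi(\tau),\tau|$, and in particular $\tau_i\preceq\tau$.

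\emph{Step 2: $\tau\preceq\tau_i$.} Suppose instead $\tau_i\prec\tau$, so that $\suc(\tau_i)\preceq\tau$. Since the roof contains $(i,i+1)$, we have $\sigma_i\preceq i^+\prec y$ and $\tau_i\succeq(i+1)^-$, whence $y\preceq\suc(\tau_i)$ for any $y\in(i,i+1)$. This yields the chain
\[
\sigma_i\prec y\preceq\suc(\tau_i)\preceq\tau .
\]
By Lemma \ref{lem:key}(1), this means $\Ext^1(\bk_{|\sigma_i,\tau_i|},\bk_{|y,\tau|})\neq0$. But both intervals lie in the compatible set $\cM$, a contradiction. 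Hence $\tau=\tau_i$.

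Statement (2) follows dually. Interval consistency now forces a lower bound consistent covering, with $|\sigma,y|,|\sigma,y^-|\in\cM$ for all $y\in(i,i+1)$. Lemma \ref{lem:inf_int_dual} supplies $|\sigma,\psi(\sigma)|\in\cN$ containing $(i,i+1)$, which gives $\sigma\preceq\sigma_i$. The reverse inequality comes from the chain $\sigma\prec\sigma_i\preceq\suc(y)\preceq\tau_i$ and Lemma \ref{lem:key}(1).

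The only delicate point I expect is the decorated-number bookkeeping. Specifically, one must check that $\varphi(\tau)\preceq i^+$ and $\tau\succeq(i+1)^-$ really give containment of $(i,i+1)$, and that the admissibility conventions make $y\preceq\suc(\tau_i)$ hold in the boundary case $\tau_i=(i+1)^-$.
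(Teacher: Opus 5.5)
Your proof is correct and follows the paper's argument. Compatibility of $|y,\tau|$ with the roof gives $\tau\preceq\tau_i$, and Lemma \ref{lem:inf_int} with minimality of the roof excludes $\tau\prec\tau_i$: you phrase this as $|\sigma_i,\tau_i|\subseteq|\varphi(\tau),\tau|$, the paper as a contradiction to $|\varphi(\tau),\tau|\subsetneq|\sigma_i,\tau_i|$. You also make explicit two facts the paper leaves implicit: $\|\tau\|\in\bZ$, so Lemma \ref{lem:inf_int} applies, and $|\varphi(\tau),\tau|$ contains $(i,i+1)$.
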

  \begin{proof}
We just prove (1). Statement (2) is similar.

By the compatibility of $\cM$, we have $\tau\preceq\tau_i$. By Lemma \ref{lem:inf_int}, there is an interval $|\varphi(\tau), \tau|\in\widehat\cM$, where $\varphi(\tau)=\inf\{\gamma\in\mathbb R^\pm: |\gamma,\tau|\in\cM\}$, which is   compatible with the roof $|\sigma_i,\tau_i|$. If $\tau\prec\tau_i$, it forces that $|\varphi(\tau),\tau|\subsetneq |\sigma_i,\tau_i|$, which contradicts with the  minimality of the roof.
  \end{proof}
  
 Combining results from Lemma \ref{lem:m_int}, Lemma \ref{lem:exist2} and Lemma \ref{lem:at_most_2}, we summarize the characterization of MCIC sets on $\bI_{m,n}^*$.
  
 \begin{thm}\label{thm:MCIC}
A set $\cM$ is MCIC on $\bI_{m,n}^*$ if and only if $\widehat\cM\in\widehat\MCom (\bI_{m,n}^*)$ and for each $m\leq i<n$ and $x\in(i,i+1)$ either $\{|x,\tau_i|, |x^+,\tau_i|: x\in(i,i+1)\}\in\cM$ or $\{|\sigma_i,x|, |\sigma_i,x^-|:x\in(i,i+1)\}\in\cM$, where $|\sigma_i,\tau_i|$ is the roof of $(i,i+1)$.  

In particular, the MCIC set $\cM$ is MCCB if and only if $\cM$ contains intervals $\{|\sigma_i,x|, |\sigma_i,x^-|:x\in(i,i+1)\}$ for each $m\leq i<n$ and $x\in(i,i+1)$. The MCIC set $\cM$ is MCCA if and only if $\cM$ contains intervals $\{|x,\tau_i|, |x^+,\tau_i|: x\in(i,i+1)\}\in\cM$ for each $m\leq i<n$ and $x\in(i,i+1)$.
 \end{thm}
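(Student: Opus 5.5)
We prove the two directions separately, relying on Lemma \ref{lem:m_int}, Lemma \ref{lem:exist2} and Lemma \ref{lem:at_most_2}.

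\emph{The ``only if'' direction.} Suppose $\cM$ is MCIC. Lemma \ref{lem:m_int} gives $\widehat\cM\in\widehat\MCom(\bI_{m,n}^*)$, so the roofs $|\sigma_i,\tau_i|$ of Lemma-Definition \ref{lemdef:exist1} exist in $\widehat\cM$. Fix $m\le i<n$. Interval consistency says that on $(i,i+1)$ the set $\cM$ is either an upper bound consistent covering, with some $\beta_i$, or a lower bound consistent covering, with some $\alpha_i$.
\begin{itemize}
\item In the upper case, (C1') puts $|x,\beta_i|$ and $|x^+,\beta_i|$ in $\cM$ for every $x\in(i,i+1)$. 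Lemma \ref{lem:at_most_2}(1), applied to $\cN=\widehat\cM$, then forces $\beta_i=\tau_i$.
\item In the lower case, (C1) together with Lemma \ref{lem:at_most_2}(2) forces $\alpha_i=\sigma_i$.
\end{itemize}
So in either case $\cM$ contains the claimed family for $(i,i+1)$.

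\emph{The ``if'' direction.} Suppose $\widehat\cM$ is maximal compatible among integral intervals and, for each $i$, $\cM$ contains one of the two families. The plan is to show that $\cM$ is interval consistent and compatible; Lemma \ref{lem:m_int} then yields that $\cM$ is MCIC.

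The implicit convention is that $\cM$ consists exactly of $\widehat\cM$ together with the chosen families. Under this convention, interval consistency is immediate:
\begin{itemize}
\item (C1)/(C1') hold by construction.
\item (C2)/(C2') hold because the only non-integral intervals with an endpoint in $(i,i+1)$ are the chosen ones.
\item (C3)/(C3') hold because only one family is chosen per $i$.
\end{itemize}
Compatibility follows from Lemma \ref{lem:exist2}. Parts (1)--(4) handle pairs of families for different $i<j$. Parts (5) and (6) handle a family against $\widehat\cM$. Compatibility within a single family is clear by Lemma \ref{lem:key}(2), since those intervals are nested.

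\emph{The main obstacle.} The delicate point is that the statement as written does not exclude $\cM$ containing additional half-integral intervals. I would resolve this by reading the condition as a description of $\cM\setminus\widehat\cM$, which matches Remark \ref{rmk:exist}. Alternatively, one can observe that any extra interval would break (C2) or (C3) of interval consistency, or be absorbed by Lemma \ref{lem:max_rig_ic}.

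\emph{The ``in particular'' statement.} Consistent below means that the lower covering is chosen for every $i$, i.e.\ $\alpha_i=\sigma_i$ for all $i$. Consistent above means that the upper covering is chosen for every $i$, i.e.\ $\beta_i=\tau_i$ for all $i$. Both are therefore read off directly from the characterization just proved.
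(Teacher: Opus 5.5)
Your proposal is correct and is essentially the paper's own argument, which consists of combining Lemma~\ref{lem:m_int}, Lemma~\ref{lem:exist2} and Lemma~\ref{lem:at_most_2} exactly as you do, with the ``if'' direction read, as you make explicit, as $\cM=\widehat\cM$ together with one chosen family for each $(i,i+1)$. That reading is forced rather than optional, so your ``alternatively'' remark does not rescue the literal statement: if $\cM$ contained both families for some $i$, then $|\sigma_i,y|$ and $|x,\tau_i|$ with $i<x<y<i+1$ would be incompatible, and Lemma~\ref{lem:m_int} gives maximality only once compatibility has been supplied by Lemma~\ref{lem:exist2}, as you correctly arrange.
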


Applying the bijection between MRIC subcategories and MCIC sets (Corollary \ref{cor:MRIC_MCIC}), we have the following result about completing maximal rigid subcategories of integral representations into MRIC subcategories.
\begin{cor}\label{cor:rigid_complete}
A subcategory $\fT$ is MRIC on $\bI_{m,n}^*$ if and only if $\widehat\fT=\fT\cap\bI_{m,n}^*\Int\in\MRig(\bI_{m,n}^*\Int)$  and for each $m\leq i<n$ and $x\in(i,i+1)$ either $\{k_{|x,\tau_i|}, k_{|x^+,\tau_i|}: x\in(i,i+1)\}\in\fT$ or $\{k_{|\sigma_i,x|}, k_{|\sigma_i,x^-|}:x\in(i,i+1)\}\in\fT$, where $|\sigma_i,\tau_i|$ is the roof of $(i,i+1)$.  

Each maximal rigid subcategory of $\bI_{m,n}^*\Int$ can be completed into $2^{n-m}$ different MRIC subcategories of $\rep(\bI_{m,n}^*)$, among which there is only one MRCB subcategory and one MRCA subcategory.     
\end{cor}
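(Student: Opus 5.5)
The plan is to transport Theorem \ref{thm:MCIC} through the correspondence $\mathcal S$ and then count. By Corollary \ref{cor:MRIC_MCIC}, $\mathcal S$ restricts to a bijection $\MRIC(\bI_{m,n}^*)\to\MCIC(\bI_{m,n}^*)$. Since $\mathcal S$ commutes with intersecting with integral intervals, $\mathcal S(\widehat\fT)=\widehat{\mathcal S(\fT)}$. By Proposition \ref{prop:supp}, $\widehat\fT\in\MRig(\bI_{m,n}^*\Int)$ exactly when $\widehat{\mathcal S(\fT)}\in\widehat\MCom(\bI_{m,n}^*)$. Membership of $k_I$ in $\fT$ is membership of $I$ in $\mathcal S(\fT)$. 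Hence the first assertion is Theorem \ref{thm:MCIC} rewritten for subcategories. For the ``if'' direction, given $\widehat\fT$ maximal rigid with one of the two families adjoined for each $i$, the resulting set of intervals is interval consistent. This holds because each adjoined family satisfies (C1)--(C3) or (C1')--(C3') on $(i,i+1)$, and the roofs for different $i$ give the fixed endpoints. Theorem \ref{thm:MCIC} then applies.

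For the counting statement, fix $\widehat\fT\in\MRig(\bI_{m,n}^*\Int)$ and let $\cN=\mathcal S(\widehat\fT)$. For each $i=m,\dots,n-1$, choose either $A_i=\{|x,\tau_i|,|x^+,\tau_i|\}$ or $B_i=\{|\sigma_i,x|,|\sigma_i,x^-|\}$, and set $\cM=\cN\cup\bigcup_i(\text{chosen family})$. Compatibility of $\cM$ follows from Lemma \ref{lem:exist2}: parts (5),(6) give compatibility with $\cN$, and parts (1)--(4) give compatibility between families for $i<j$. By Lemma \ref{lem:m_int}, since $\widehat\cM=\cN$ is maximal, $\cM$ is MCIC, so every choice yields an MRIC completion.

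Conversely, every MRIC completion has this form. By Theorem \ref{thm:MCIC}, it contains $\cN$ and, for each $i$, at least one of $A_i$, $B_i$. It cannot contain both, because interval consistency on $(i,i+1)$ is either a lower or an upper covering, and (C3)/(C3') forbid the other family. It contains no other half-integral intervals, by (C2)/(C2') together with Lemma \ref{lem:at_most_2}, which pins the fixed endpoint to $\tau_i$ or $\sigma_i$. So completions correspond to choice functions $\{m,\dots,n-1\}\to\{A,B\}$.

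It remains to check that distinct choices give distinct subcategories, which I expect to be the only delicate point. The families $A_i$ and $B_i$ are disjoint. I will argue this using the fact that $|x,\tau_i|$ has left endpoint in $(i,i+1)$. A member $|\sigma_i,y|$ of $B_i$ would have left endpoint $\sigma_i$, which is not in $(i,i+1)$ because $\sigma_i$ is integral, so a member of $A_i$ is never in $B_i$. Hence there are exactly $2^{n-m}$ completions. By Theorem \ref{thm:MCIC}, choosing $B_i$ for all $i$ gives the unique MRCB completion, and choosing $A_i$ for all $i$ gives the unique MRCA completion.
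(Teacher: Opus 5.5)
Your proposal is correct and follows the paper's own route: the corollary comes from transporting Theorem~\ref{thm:MCIC} through the bijection $\mathcal S$ of Corollary~\ref{cor:MRIC_MCIC}, with the count $2^{n-m}$ coming from the completions of Remark~\ref{rmk:exist} (Lemmas~\ref{lem:exist2}, \ref{lem:m_int}, \ref{lem:at_most_2}), and you supply the details the paper leaves implicit. One small tidy-up: (C2)/(C2') and Lemma~\ref{lem:at_most_2} literally handle only $|x,\tau|$ and $|\sigma,x|$, not stray $|x^+,\beta|$ or $|\alpha,x^-|$; it is cleaner to note that $\cN$ together with the chosen families is already MCIC, hence maximal compatible by Lemma~\ref{lem:max_rig_ic}, so any compatible $\cM$ containing it must equal it.
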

 
Finally we link MRIC subcategories with tilting modules, obtaining a full characterization. 

\begin{lem}\label{lem:tilt}
 There is a bijection between the set of isomorphism classes of tilting $kQ_n^*$-modules and maximal rigid subcategories of $\bI_{m,n}^*\Int$.
 \end{lem}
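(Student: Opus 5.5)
The plan is to transport the problem along the equivalence $\fD^*_{m,n}: kQ^*_{m,n}\modu\to\bI^*_{m,n}\Int$ of Lemma~\ref{lem:eqv_A}, and then use the classical fact, Proposition~\ref{prop:tilting_eqv}(1)$\iff$(5), that over a hereditary algebra tilting modules are exactly the basic maximal rigid modules. The map will be $[T]\mapsto \add \fD^*_{m,n}(T)$, with inverse $\widehat\fT\mapsto [\fD^{-1}(\bigoplus_{I\in\mathcal S(\widehat\fT)}k_I)]$.

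First I will check that rigidity is preserved. The equivalence $\fD=\fD^*_{m,n}$ is exact, and its image $\bI^*_{m,n}\Int$ is an abelian subcategory of $\rep(\bI^*_{m,n})$. I need to confirm that $\Ext^1$ computed in $\bI^*_{m,n}\Int$ agrees with $\Ext^1$ in $\rep(\bI^*_{m,n})$ for integral modules, since the paper's notion of maximal rigid in $\bI^*_{m,n}\Int$ should mean rigid in the ambient sense.

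\begin{itemize}
\item By Lemma~\ref{lem:key}(1), a nonzero extension between $\bk_{|\alpha,\beta|}$ and $\bk_{|\gamma,\delta|}$ is realized by a middle term built from the intervals $|\alpha,\delta|$ and $|\gamma,\beta|$.
\item When all four endpoints are integral, these intervals are integral again. Hence the short exact sequences live in $\bI^*_{m,n}\Int$, which is closed under extensions.
\item Yoneda $\Ext^1$ therefore coincides in both categories, and via $\fD$ it coincides with $\Ext^1_{kQ^*_{m,n}}$.
\end{itemize}

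Consequently, $\add M$ is rigid in $kQ^*_{m,n}\modu$ if and only if $\add \fD(M)$ is rigid in $\bI^*_{m,n}\Int$.

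Next I will reduce to finitely many objects. Since $kQ^*_{m,n}$ is of finite representation type, $\bI^*_{m,n}\Int$ has finitely many indecomposables up to isomorphism. So every rigid subcategory is $\add$ of a finite set of pairwise nonisomorphic indecomposables, and thus equals $\add$ of a basic object. Maximal rigid subcategories then correspond exactly to basic modules $T$ that are maximal rigid, in the sense that no indecomposable $X\notin\add T$ has $T\oplus X$ rigid. Proposition~\ref{prop:supp} gives the same statement on the level of compatible sets.

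The key step, and the main obstacle, is identifying maximal rigid basic modules with tilting modules. Proposition~\ref{prop:tilting_eqv}(5), which uses Bongartz completion and the hereditary property, does exactly this. One direction needs care: a tilting module $T$ must be maximal among rigid modules. Suppose $T\oplus X$ is rigid with $X$ indecomposable. Then $T\oplus X$ is partial tilting and can be completed by Bongartz. Since every tilting module has exactly $|Q_0|$ nonisomorphic indecomposable summands and $T$ already has that many, $X\in\add T$. For the converse, Bongartz completion of a maximal rigid $T$ yields a tilting module $T\oplus E$ with $E\in\add T$ by maximality, so $T$ is tilting.

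Finally, injectivity and surjectivity follow directly:

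\begin{itemize}
\item $\add \fD(T)=\add\fD(T')$ forces $T\cong T'$ for basic modules, since $\fD$ is an equivalence and both modules are basic.
\item Every maximal rigid $\widehat\fT$ equals $\add\fD(T)$ for the basic $T$ obtained above.
\end{itemize}

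This gives the claimed bijection.
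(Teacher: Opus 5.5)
Your proof is correct and follows the paper's argument: you transport along the exact equivalence $\fD^*_{m,n}$ of Lemma~\ref{lem:eqv_A} and use Proposition~\ref{prop:tilting_eqv} (1)$\iff$(5), i.e.\ Bongartz completion over the hereditary algebra, to identify basic tilting modules with additive generators of maximal rigid subcategories, which gives the bijection $[T]\mapsto\add\fD^*_{m,n}(T)$. Your extra check that $\Ext^1$ in $\bI^*_{m,n}\Int$ agrees with $\Ext^1$ in $\rep(\bI^*_{m,n})$ is not needed for the lemma as stated, since rigidity in Definition~\ref{def:rigid_ab} is intrinsic to the abelian category and $\fD^*_{m,n}$ preserves it, but it is exactly what makes the result compatible with the compatible-set description in Proposition~\ref{prop:supp}.
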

 \begin{proof}
By Proposition \ref{prop:tilting_eqv}, $T$ is a tilting $kQ_{m,n}$-module if and only if $\add T$ is a maximal rigid subcategory of $kQ_{m,n}^*\modu$. Denote by $kQ_{m,n}^*\tilt$ the isomorphism classes of tilting $kQ_{m,n}^*$-modules. Together with the  equivalence functor $\mathbf D_{m,n}^*$ in Lemma \ref{lem:eqv_A}, we have a bijection:    
\begin{align*}
kQ_{m,n}^*\tilt  & \to \MRig(\bI_{m,n}^*\Int)\\
[T] \mapsto  & \add \fD_{m,n}^*(T).
\end{align*}
 \end{proof}

 It follows immediately that $|kQ_{m,n}^*\tilt|=|\MRig(\bI_{m,n}^*\Int)|$  is  the well-known Catalan number \cite{BK}.
We show a formula for the number of MRIC subcategories over four types of continuous quivers of type $A$, which generalizes \cite[Theorem 2.5]{LZ1}.

\begin{cor}
Let $C_k=\displaystyle\frac{1}{k+1}{2k\choose k}$ be the $k$-th Catalan number. Then\\
{\rm(1)} $|kQ_{m,n}^*\tilt|=|\MRig(\bI_{m,n}^*\Int)|=|\MRCB((\bI_{m,n}^*)|=|\MRCA((\bI_{m,n}^*)|=\begin{cases} C_{2n-2m+1},  & *=\emptyset\\ 
  C_{2n-2m},  & *=+\\
   C_{2n-2m},  & *=-\\
  C_{2n-2m-1},  & *=\pm,\end{cases}$\\
{\rm(2)} $|\MRIC(\bI_{m,n}^*)|=2^{m-n}|kQ_{m,n}^*\tilt|.$
\end{cor}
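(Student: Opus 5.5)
The plan is to reduce every count to the classical enumeration of tilting modules over a linearly oriented type $A$ quiver, using Lemma \ref{lem:tilt} and the completion results (Theorem \ref{thm:MCIC} and Corollary \ref{cor:rigid_complete}). It is a known result (see \cite{BK}) that a linearly oriented quiver of type $A_N$ has exactly $C_N$ isomorphism classes of basic tilting modules. So for (1) the first task is to count the vertices of $Q^*_{m,n}$ in \eqref{eqn:Q_n}:
\begin{itemize}
\item $Q_{m,n}$ has the $n-m+1$ integer vertices $m,\dots,n$ and the $n-m$ vertices $m^\sharp,\dots,(n-1)^\sharp$, so $2(n-m)+1$ vertices in total;
\item $Q^+_{m,n}$ and $Q^-_{m,n}$ each lose one endpoint, leaving $2(n-m)$ vertices;
\item $Q^\pm_{m,n}$ loses both endpoints, leaving $2(n-m)-1$ vertices.
\end{itemize}
This gives $|kQ^*_{m,n}\tilt|$ in each of the four cases. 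The equality $|kQ^*_{m,n}\tilt|=|\MRig(\bI^*_{m,n}\Int)|$ is then exactly Lemma \ref{lem:tilt}.

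For the equalities with $|\MRCB(\bI^*_{m,n})|$ and $|\MRCA(\bI^*_{m,n})|$, I would use the restriction map $\fT\mapsto\widehat\fT=\fT\cap\bI^*_{m,n}\Int$. By Corollary \ref{cor:rigid_complete} it sends MRCB subcategories to $\MRig(\bI^*_{m,n}\Int)$. It is surjective because every maximal rigid $\widehat\fT$ admits the completion obtained by adding all $k_{|\sigma_i,x|}$ and $k_{|\sigma_i,x^-|}$ (Lemma \ref{lem:exist2} and Remark \ref{rmk:exist}).

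For injectivity, work on the side of sets via Corollary \ref{cor:MRIC_MCIC}. Theorem \ref{thm:MCIC} says an MCCB set $\cM$ consists of $\widehat\cM$ together with the families $\{|\sigma_i,x|,|\sigma_i,x^-|\}$. By Lemma \ref{lem:m_int}, $\widehat\cM$ is maximal compatible, and the roofs $|\sigma_i,\tau_i|$ are determined by $\widehat\cM$ alone. Hence two MCCB sets with the same integral part coincide. The MCCA case is the same argument with the families $\{|x,\tau_i|,|x^+,\tau_i|\}$.

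For (2) the target is $|\MRIC(\bI^*_{m,n})|=2^{n-m}\,|kQ^*_{m,n}\tilt|$; the exponent $m-n$ in the displayed statement should be read as $n-m$, as Theorem A(3) indicates. By Theorem \ref{thm:MCIC}, MCIC sets correspond to pairs $(\cN,\varepsilon)$. Here $\cN\in\widehat\MCom(\bI^*_{m,n})$, and $\varepsilon\in\{\text{below},\text{above}\}^{n-m}$ records, for each $i$, whether $\cM$ contains the family $\{|\sigma_i,x|,|\sigma_i,x^-|\}$ or the family $\{|x,\tau_i|,|x^+,\tau_i|\}$. Every such pair yields an MCIC set by Lemma \ref{lem:exist2} and Theorem \ref{thm:MCIC}.

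The step I expect to need the most care is checking that this correspondence is a bijection, i.e.\ that $\varepsilon$ is uniquely determined by $\cM$. Since the integral part $\widehat\cM=\cN$ is determined by $\cM$, it remains to show the two options cannot both occur for the same $i$. An interval $|\sigma_i,x|$ with $x\in(i,i+1)$ has non-integral right endpoint, while $|x',\tau_i|$ has non-integral left endpoint; so the two families are disjoint. Moreover, condition (C3) (resp.\ (C3')) in Definition \ref{def:int_consist} forbids the other family from appearing alongside the chosen one. Thus the choice for each $i$ is forced, the $2^{n-m}$ completions of each $\cN$ are pairwise distinct, and these are all the completions. Transporting through $\mathcal S$ (Corollary \ref{cor:MRIC_MCIC}) and Lemma \ref{lem:tilt} then gives the formula.
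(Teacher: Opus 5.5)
Your proposal is correct and follows the paper's argument, which the paper leaves implicit: the corollary is deduced directly from Lemma \ref{lem:tilt}, the Catalan count of tilting modules over linearly oriented type $A$ from \cite{BK}, and the completion count in Corollary \ref{cor:rigid_complete}; you are also right that the exponent in (2) must be $n-m$. Two steps are stated a bit quickly, and both follow at once from the maximality arguments of Lemmas \ref{lem:max_rig_ic} and \ref{lem:m_int}: first, that an MRCB subcategory is MRIC, so that Corollary \ref{cor:rigid_complete} applies to it; second, that an MCIC set \emph{consists of} $\widehat\cM$ plus the chosen families, rather than merely containing them, since that union is already MCIC and so cannot be properly contained in $\cM$.
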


\section{(Weak) tilting subcategories}

Recall the definition of (weak) tilting subcategories from Definition \ref{def:tilt_sub} and the definition of torsion pairs below:
\begin{defn}\label{def:tors}  Let $\cA$ be an abelian category. A  pair $(\cT,\cF)$ of full subcategories 
 of $\cA$ is called a {\bf torsion pair}, if the torsion and torsion-free classes $\cT$ and $\cF$, respectively, satisfying
\begin{enumerate}
\item $\Hom (\cT,\cF) = 0$, and
\item  for any object $M\in\cA$ there is a short exact sequence $0\to M' \to  M \to M''\to 0$ with
$M'\in \cT$ and $M''\in\cF$.
\end{enumerate}
\end{defn}

If $\cA$ is an abelian category and
$\fT\subseteq\cA$ is a full additive subcategory, write
\[
 \Gen(\fT)=\{X\in\cA:\text{$X$ is a quotient of an
 object of $\fT$}\}
\]
and
\[
 \fT^{\perp_1}=\{X\in\cA:\Ext^{1}_{\cA}(T,X)=0
 \text{ for every }T\in\fT\}.
\]
  Similar to tilting modules of finitely generated modules,  tilting subcategories can generate torsion pairs.
 \begin{lem}\cite[Proposition 2.22]{BBOS}\label{lem:weak_tilt_tor}
 Let $\cA$ be an abelian category with enough projective objects. If $\fT\subseteq \cA$ is weak tilting, then $\Gen(\fT)=\fT^{\perp_1}$. Furthermore, if $\fT$ is tilting, then $(\Gen(\fT),\fT^\perp)$ is a torsion pair.
 \end{lem}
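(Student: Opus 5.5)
Two inclusions need to be shown, $\Gen(\fT)\subseteq\fT^{\perp_1}$ and $\fT^{\perp_1}\subseteq\Gen(\fT)$. The torsion pair statement then follows from contravariant finiteness.

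For $\Gen(\fT)\subseteq\fT^{\perp_1}$, I take $X\in\Gen(\fT)$ with an epimorphism $T'\to X$, $T'\in\fT$, and kernel $K$. Applying $\Hom(T,-)$ for $T\in\fT$ to $0\to K\to T'\to X\to 0$ gives an exact segment
$$\Ext^1(T,T')\to\Ext^1(T,X)\to\Ext^2(T,K).$$
The left term vanishes by (W2). The right term vanishes by (W1), since $\pd T\le 1$ means $\Ext^2(T,-)=0$; this can be computed via a projective resolution of length one, which exists because $\cA$ has enough projectives. Hence $X\in\fT^{\perp_1}$.

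For the converse, let $X\in\fT^{\perp_1}$. Choose an epimorphism $P\to X$ with $P$ projective, and use (W3) to get $0\to P\xrightarrow{\iota} T_0\to T_1\to 0$. Form the pushout of $\iota$ along $P\to X$. This gives a short exact sequence $0\to X\to E\to T_1\to 0$ together with a map $T_0\to E$. The map $T_0\to E$ is epic: its image contains the image of $X$, since $P\to X$ is onto, and it surjects onto $T_1$. Since $\Ext^1(T_1,X)=0$, the sequence splits, so $X$ is a direct summand, hence a quotient, of $E$. Therefore $X$ is a quotient of $T_0\in\fT$, and $X\in\Gen(\fT)$. The main care is in this step: checking that the pushout map $T_0\to E$ is really an epimorphism. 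This is a diagram chase in an abelian category.

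For the second statement, assume $\fT$ is contravariantly finite. $\Gen(\fT)$ is closed under quotients. It is also closed under extensions, because it equals $\fT^{\perp_1}$ and $\Ext^1(T,-)$ is half exact. Given $M$, I take a right $\fT$-approximation $f\colon T_M\to M$ and set $M'=\Ima f\in\Gen(\fT)$. Every map from an object of $\fT$ to $M$ factors through $f$, so $M'$ is the largest $\Gen(\fT)$-subobject of $M$. Indeed, any map $T\to M$ lands in $M'$, and an object of $\Gen(\fT)$ is the sum of such images. It remains to show $\Hom(\fT,M/M')=0$, which is the hard step. Given $g\colon T\to M/M'$, I pull back $0\to M'\to M\to M/M'\to 0$ along $g$. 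The resulting extension of $\Ima g$ by $M'$ lies in $\Gen(\fT)$ by extension closure and maps into $M$, so its image lies in $M'$; this forces $g=0$. Thus $M/M'\in\fT^{\perp}$, and $(\Gen(\fT),\fT^{\perp})$ is a torsion pair.
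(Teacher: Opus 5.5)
Your proof is correct and complete. The long exact sequence together with $\Ext^2(T,-)=0$ from (W1) gives $\Gen(\fT)\subseteq\fT^{\perp_1}$. The pushout of the (W3) sequence along $P\to X$ gives the reverse inclusion; this is the same pushout construction the paper uses in the proof of Proposition~\ref{prop:weak_tilt_rigid}. The torsion-pair part is the standard argument for a contravariantly finite subcategory closed under quotients and extensions. The only condition you leave unstated, $\Hom(\Gen(\fT),\fT^{\perp})=0$, is immediate: precomposing with an epimorphism from an object of $\fT$ kills any such map.

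The paper gives no proof of its own here; it simply quotes the result from \cite[Proposition 2.22]{BBOS}, so there is no in-paper argument to compare against.
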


The next proposition shows that weak tilting is stronger than maximal rigidity in a hereditary category with enough projective objects. Notice that the hypothesis on having enough projective objects is essential for constructing the commutative diagram.
 
\begin{prop}\label{prop:weak_tilt_rigid}
    Let $\cA$ be a hereditary abelian category with enough projective objects. If $\fT$ is a weak tilting subcategory of $\cA$, then it is maximal rigid.
\end{prop}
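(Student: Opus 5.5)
Rigidity is immediate from (W2), so the whole task is maximality. Suppose $X\in\cA$ satisfies $\Ext^1(X,T)=0=\Ext^1(T,X)$ for all $T\in\fT$. We have to show $X\in\fT$. More precisely, $\fT$ is an additive full subcategory, and being rigid means every object of the larger category $\fT\cup\{X\}$, after adding direct sums, is $\Ext^1$-orthogonal. So it suffices to show that $X$ is isomorphic to an object of $\fT$, after adjusting $\fT$ to be closed under isomorphisms and direct summands. If $\fT$ is not closed under summands, we instead aim to show $X\oplus T'\in\fT$ for some $T'\in\fT$. The plan is to produce a split short exact sequence exhibiting $X$ as a summand of an object of $\fT$.

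First, since $\Ext^1(T,X)=0$ for every $T\in\fT$, we have $X\in\fT^{\perp_1}$. By Lemma \ref{lem:weak_tilt_tor}, $\fT^{\perp_1}=\Gen(\fT)$, so there is an epimorphism $f:T_X\to X$ with $T_X\in\fT$. Let $K=\ker f$, giving $0\to K\to T_X\to X\to 0$.

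Next, using enough projectives, choose a projective $P$ and an epimorphism $P\to X$. By (W3) there is an exact sequence $0\to P\to T_0\to T_1\to 0$ with $T_0,T_1\in\fT$. Since $\cA$ is hereditary, $\Ext^2(T_1,-)=0$. The long exact sequence for $\Hom(-,X)$ on $0\to P\to T_0\to T_1\to0$ then shows that $\Hom(T_0,X)\to\Hom(P,X)$ is surjective, because $\Ext^1(T_1,X)=0$. So the epimorphism $P\to X$ extends to a map $T_0\to X$, which is also an epimorphism. This is the commutative diagram the authors mention.

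Now take $f:T_0\to X$ as the chosen epimorphism, with kernel $K$. The goal is to show that $K\in\fT^{\perp_1}$ and that $K$ is $\Ext^1$-orthogonal from $X$, so that the sequence splits. Apply $\Hom(T,-)$ for $T\in\fT$: from $\Ext^1(T,T_0)=0$ and heredity we get $\Hom(T,T_0)\to\Hom(T,X)\to\Ext^1(T,K)\to0$. Alternatively, apply $\Hom(X,-)$: $\Ext^1(X,T_0)=0$ and heredity give the exact sequence $\Hom(X,X)\to\Ext^1(X,K)\to\Ext^1(X,T_0)=0$. Splitting requires the class of the sequence in $\Ext^1(X,K)$ to vanish.

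The main obstacle is arranging this vanishing. The first thing to try is to replace $T_0\to X$ by a map that is surjective on $\Hom(T,-)$ for all $T\in\fT$; then $\Ext^1(T,K)=0$, so $K\in\Gen(\fT)$. Without contravariant finiteness such a right approximation may not exist. So I would instead use the pushout along $K\to T_0$ of a (W3)-type sequence for a projective cover of $K$, combined with $\Ext^1(X,\fT)=0$. Concretely, write $K$ as a quotient of some $T'\in\fT$ obtained via $P_K$ and (W3) as above. Then $\Ext^1(X,K)$ is a quotient of $\Ext^1(X,T')=0$ by heredity, since $\Ext^2(X,-)=0$ makes $\Ext^1(X,-)$ right exact. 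This requires $K\in\Gen(\fT)$, i.e.\ $K\in\fT^{\perp_1}$, which is exactly where the choice of epimorphism matters. I would check that the (W3)-constructed epimorphism from $T_0$ gives this, using $\Ext^1(T,P)$-vanishing modulo $\fT$. Granting $\Ext^1(X,K)=0$, the sequence $0\to K\to T_0\to X\to 0$ splits. Then $X$ is a summand of $T_0$, so $\fT\oplus\add X$ lies in the summand closure of $\fT$, and maximality follows.
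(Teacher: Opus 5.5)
There is a genuine gap, and it sits exactly at the step you defer with ``I would check'' and ``Granting $\Ext^1(X,K)=0$''. You claim that the kernel $K$ of the extended epimorphism $\tilde g\colon T_0\to X$ lies in $\fT^{\perp_1}$, so that $0\to K\to T_0\to X\to 0$ splits. This is false in general. Write $P_1=\ker(P\to X)$, which is projective because $\cA$ is hereditary. The snake lemma, applied to the map of sequences $(P_1\to P\to X)\to(K\to T_0\to X)$, gives an exact sequence $0\to P_1\to K\to T_1\to 0$. Hence $\Ext^1(T,K)$ is only a quotient of $\Ext^1(T,P_1)$, which has no reason to vanish.

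Concretely, let $Q\colon 1\to 2$. Then $kQ\modu$ has indecomposables $S(2)=P(2)$, $P(1)$ (projective-injective) and $S(1)$, with a nonsplit sequence $0\to S(2)\to P(1)\to S(1)\to 0$. The subcategory $\fT=\add(P(1)\oplus S(1))$ is tilting, hence weak tilting, and $X=S(1)$ is compatible with $\fT$. Take $P=P(1)\to S(1)$ and the (W3) sequence $0\to P(1)\xrightarrow{1}P(1)\to 0\to 0$. Then your $\tilde g$ is forced to be $P(1)\to S(1)$, with $K=S(2)$. This sequence does not split, and $\Ext^1(S(1),S(2))\neq 0$, so $K\notin\fT^{\perp_1}$.

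So $X$ need not be a summand of $T_0$ via $\tilde g$. Your opening use of $\Gen(\fT)=\fT^{\perp_1}$ does not help either: it yields an epimorphism but gives no control over its kernel.

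The missing idea is to apply (W3) a second time, to the syzygy: $0\to P_1\xrightarrow{f'}T_0'\to T_1'\to 0$. The paper's argument runs as follows.
\begin{enumerate}
\item Push out $0\to P_1\to P\to X\to 0$ along $u\colon P\to T_0$. This gives $0\to P_1\xrightarrow{h}T_0\to E\to 0$ with $E\cong X\oplus T_1$, because $\Ext^1(T_1,X)=0$.
\item Factor $h$ through $f'$, which is possible because $\Ext^1(T_1',T_0)=0$.
\item This produces $0\to T_0'\to T_0\oplus T_1'\to E\to 0$, which splits because $\Ext^1(E,\fT)=0$.
\end{enumerate}
So $X$ is a summand of $T_0\oplus T_1'$, not of $T_0$.

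In your own notation the repair is the following. Push $0\to P_1\to K\to T_1\to 0$ out along $f'$; this embeds $K$ into $K'\cong T_0'\oplus T_1\in\fT$ with cokernel $T_1'$. Then push $0\to K\to T_0\to X\to 0$ out along $K\hookrightarrow K'$. The resulting object $Y$ satisfies $Y\cong K'\oplus X$ (as $\Ext^1(X,K')=0$) and $Y\cong T_0\oplus T_1'$ (as $\Ext^1(T_1',T_0)=0$). Hence $X$ is a summand of $T_0\oplus T_1'\in\fT$.
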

\begin{proof}
    Assume $\add(\fT\cup\{X\})$ is rigid. It suffices to show that $X\in\fT$.
    Suppose $0\to P_1\stackrel{f}\to P_0\stackrel{g}\to X\to 0$ is a projective resolution. Since $\fT$ is weak tilting, there are exact sequences $\delta:0\to P_0\stackrel{u}\to T_0\stackrel{v}\to T_1\to 0$ and $\delta':0\to P_1\stackrel{u'}\to T'_0\stackrel{v'}\to T'_1\to 0$, with $T_0,T_1, T'_0, T'_1\in\fT$. Hence, we have a pushout diagram:
    $$
    \xymatrix{0\ar[r]&P_1\ar[r]^f\ar@{=}[d]&P_0\ar[r]^g\ar[d]_u\ar@{}[dr] |{(p.o.)}&X\ar[r]\ar[d]&0\\
              0\ar[r]&P_1\ar[r]^h&T_0\ar[r]\ar[d]_v&E\ar[r]\ar[d]&0\\
              &&T_1\ar@{=}[r]&T_1}
    $$
Since $\Ext^1(\fT,X)=0$, it follows that $E\cong X\oplus T_1$ satisfying $\Ext^1(E,\fT)=0$. If we had shown $E\in\fT$, we could conclude that $X\in\fT$ due to the fact that $\fT$ is additive. To see $E\in\fT$, applying $\Hom(-,T_0)$ to the exact sequence $\delta'$,  the vanishing of $\Ext^1(T'_1,T_0)$ implies that $\Hom(f',T_0):\Hom(T'_0,T_0) \to \Hom(P_1,T_0)$ is surjective. In particular, the map $h$ factors through $f'$. Hence, there is a commutative diagram: 
$$
    \xymatrix{0\ar[r]&P_1\ar[r]^{f'}\ar@{=}[d]&T'_0\ar[r]^{g'}\ar[d]&T_1'\ar[r]\ar[d]&0\\
              0\ar[r]&P_1\ar[r]^h&T_0\ar[r]&E\ar[r]&0.}
$$
Thus $0\to T'_0\to T_0\oplus T_1'\to E\to 0$ is exact and it splits since $\Ext^1(E,\fT)=0$. Hence $E\in\fT$ because $\fT$ is additive. This finishes the proof.
\end{proof}

The converse fails in general. The obstruction is that maximal rigidity does not by itself provide the exact sequence required in (W3), which is in fact determined by a  monic left approximation.

\begin{prop}\label{prop:wtilt_app}
Let $\cA$ be a hereditary abelian category with enough projective objects. A maximal rigid subcategory $\fT$ of $\cA$ is weak tilting if and only if each projective object has a monic left $\fT$-approximation.  
\end{prop}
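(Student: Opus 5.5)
The plan is to prove the two directions separately, using only the long exact $\Hom$/$\Ext$ sequences and heredity ($\Ext^2=0$). Note first that (W1) holds automatically in a hereditary category and (W2) is just rigidity. So for a maximal rigid $\fT$, being weak tilting amounts exactly to (W3).

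\emph{Necessity.} Suppose $\fT$ is weak tilting and $P$ is projective. Take the sequence $0\to P\stackrel{u}\to T_0\to T_1\to 0$ from (W3). Apply $\Hom(-,T')$ for an arbitrary $T'\in\fT$ to get
$$\Hom(T_0,T')\xrightarrow{\Hom(u,T')}\Hom(P,T')\to\Ext^1(T_1,T')=0.$$
So every morphism $P\to T'$ factors through $u$. Hence $u$ is a monic left $\fT$-approximation.

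\emph{Sufficiency.} Let $u:P\to T_0$ be a monic left $\fT$-approximation and put $T_1=\cok u$. This gives $0\to P\to T_0\to T_1\to 0$, and it remains to show $T_1\in\fT$. By maximality of $\fT$, it suffices to show that $\add(\fT\cup\{T_1\})$ is rigid, i.e. that $\Ext^1(T_1,T')$, $\Ext^1(T',T_1)$ and $\Ext^1(T_1,T_1)$ all vanish for $T'\in\fT$.

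\begin{enumerate}
\item For $\Ext^1(T_1,T')$, apply $\Hom(-,T')$ to get
$$\Hom(T_0,T')\to\Hom(P,T')\to\Ext^1(T_1,T')\to\Ext^1(T_0,T')=0.$$
The first map is surjective because $u$ is a left approximation, so $\Ext^1(T_1,T')=0$.
\item For $\Ext^1(T',T_1)$, apply $\Hom(T',-)$ to get
$$\Ext^1(T',T_0)=0\to\Ext^1(T',T_1)\to\Ext^2(T',P)=0,$$
where the last term vanishes by heredity. Hence $\Ext^1(T',T_1)=0$.
\item For $\Ext^1(T_1,T_1)$, apply $\Hom(T_1,-)$ to get
$$\Ext^1(T_1,T_0)\to\Ext^1(T_1,T_1)\to\Ext^2(T_1,P)=0.$$
Here $\Ext^1(T_1,T_0)=0$ by step (1) applied to $T'=T_0$, so $\Ext^1(T_1,T_1)=0$.
\end{enumerate}

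Since $\Ext^1$ is additive in each variable, $\add(\fT\cup\{T_1\})$ is rigid. Maximality then forces $T_1\in\fT$, so (W3) holds.

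The only genuinely non-formal point is step (1), where the approximation property is needed precisely to kill $\Ext^1(T_1,\fT)$. Everything else is heredity together with rigidity of $\fT$, so I expect no real obstacle.
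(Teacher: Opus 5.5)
Your proof is correct and follows the paper's argument: necessity comes from rigidity applied to the (W3) sequence, and sufficiency uses the approximation property to kill $\Ext^1(T_1,\fT)$, heredity plus rigidity to kill $\Ext^1(\fT,T_1)$, and then maximality. Your step (3), verifying $\Ext^1(T_1,T_1)=0$, is a detail the paper leaves implicit, but it is genuinely needed before maximality can be invoked.
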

\begin{proof}
If $\fT$ is weak tilting, then by condition (W3), there is an exact sequence $0\to P\stackrel{f}\to T_0\to T_1\to 0,$ for each projective object $P$. Rigidity of $\fT$ implies that $\Hom(f,\fT)$ is surjective. Thus $f$ is a monic left $\fT$-approximation.

Conversely, weak tilting conditions (W1) and (W2) follow directly for any maximal rigid subcategory $\fT$ of a hereditary abelian category. For each projective object $P$, assume $f:P\to T_0$ is a monic left $\fT$-approximation and consider the short exact sequence 
$$0\to P\stackrel{f}\to T_0\to \cok f\to 0.$$
 Applying $\Hom(-,\fT)$ yields an exact sequence: $$0\to \Hom(\cok f,\fT)\to \Hom(T_0,\fT)\stackrel{\Hom(f,\fT)}\to \Hom(P,\fT)\to \Ext^1(\cok f, \fT)\to \Ext^1(T_0,\fT)=0$$ Since $f$ is a left $\fT$-approximation, the map $\Hom(f,\fT)$ is surjective. So $\Ext^1(\cok f,\fT)=0$. It is easy to see $\Ext^1(\fT,\cok f)=0$ as well, by applying $\Hom(\fT,-)$ to the short exact sequence. Thus due to the maximality of $\fT$, $T_1=\cok f\in\fT$. Therefore (T3) holds and $\fT$ is weak tilting.
\end{proof}

\begin{exm}\label{exm:max_rig_not_tilt}
Let $\fT=\add(\fT_1\cup \fT_2)$ be an additive subcategory of $\rep(\bI_{0,1}^{\pm})$, where
  $$\fT_1=\{k_{(\frac{m}{2^n},\frac{m+1}{2^n})}: n\geq 0, 0\leq m\leq 2^n-1\}, \quad \fT_2=\{k_{[x,x]}: x\in \mathbb R, x\neq\frac{m}{2^n}, n\geq 0, 0<m<2^n-1\}.$$ 
   Then according to Example \ref{exm:1}, $\fT$ is maximal rigid. We claim that it is  not weak tilting, because the projective object $P=k_{[a,1)}$ does not have a left $\fT$-approximation, for $a\neq\frac{m}{2^n}$. 
   
   Indeed, assume there were a left $\fT$-approximation $f: P\to T$ , where $T=\bigoplus\limits_{i=0}^k T_i$ is a finite direct sum of non-isomorphic indecomposable objects $T_i\in\fT$. Without loss of generality, we can assume $\pi_if\neq 0$ for each canonical projection $\pi_i:T\to T_i$. Consequently, $a\in\supp T_i$ for each summand $T_i$. Therefore, there is at most one summand, say $T_0$, belongs to $\fT_2$, which is isomorphic to $k_{[a,a]}$. The remaining direct summands $T_1,\cdots,T_k$ belong to $\fT_1$. One can always find some integers $n\gg 0$ and $m$ such that $a\in(\frac{m}{2^{n}},\frac{m+1}{2^{n}})\subsetneq\supp T_i$, for each $1\leq i\leq k$. Observe that $\Hom(T_i,k_{(\frac{m}{2^{n}},\frac{m+1}{2^{n}})})=0$ for $0\leq i\leq k$. However, this means that the nonzero map $P_{[a,1)}\to k_{(\frac{m}{2^{n}},\frac{m+1}{2^{n}})}$ cannot factor through $f$. 
\end{exm}  

We now return to representations of continuous type $A$ quivers. Interval consistency supplies the missing approximation data by forcing a uniform family of intervals in each open interval $(i,i+1)$. With this extra condition imposed, we are able to show every MRIC subcategory is in fact weak tilting. A preparatory lemma is needed in advance.

\begin{lem}[Horseshoe]\label{lem:horseshoe}
Let $\fT$ be a rigid subcategory of a hereditary abelian category $\cA$. Let $0\to A\stackrel{u}\to B\stackrel{v}\to C\to 0$ be an exact sequence in $\cA$.  Assume there are exact sequences $0\to A\stackrel{f}\to T_0\stackrel{f'}\to T_1\to 0$ and $0\to C\stackrel{h}\to T'_0\stackrel{h'}\to T'_1\to 0$ with $T_0,T_1,T'_0,T'_1\in \fT$ then there is a commutative diagram with exact rows and columns:
$$
\begin{tikzcd}
&0\arrow[d]&0\arrow[d]&0\arrow[d]\\
0\arrow [r]&A\arrow[r,"u"]\arrow [d,"f"]&B\arrow [r,"v"]\arrow [d,"g"]&C\arrow[r]\arrow[d,"h"]&0\\
0\arrow [r]&T_0\arrow[r,"\tiny{\begin{bmatrix} 1\\0\end{bmatrix}}"]\arrow [d,"f'"]&T_0\oplus T'_0\arrow[r,"\tiny{[0,1]}"]\arrow [d,"g'"]&T'_0\arrow[r]\arrow[d,"h'"]&0\\
0\arrow[r]&T_1\arrow[r,"\tiny{\begin{bmatrix} 1\\0\end{bmatrix}}"]\arrow[d]&T_1\oplus T'_1\arrow[r,"\tiny{[0,1]}"]\arrow[d]&T'_1\arrow[r]\arrow[d]&0,\\
&0&0&0
\end{tikzcd}
$$
\end{lem}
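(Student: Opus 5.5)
The plan is to build the middle column as a Horseshoe-type construction, with rigidity of $\fT$ and heredity of $\cA$ supplying the one extension we need. The key point is to extend $f:A\to T_0$ along $u:A\to B$. Applying $\Hom(-,T_0)$ to $0\to A\to B\to C\to 0$ gives the exact sequence
$$\Hom(B,T_0)\xrightarrow{u^*}\Hom(A,T_0)\to\Ext^1(C,T_0).$$
So it suffices to show $\Ext^1(C,T_0)=0$. Apply $\Hom(-,T_0)$ to $0\to C\stackrel{h}\to T'_0\to T'_1\to 0$. We obtain the exact sequence $\Ext^1(T'_0,T_0)\to\Ext^1(C,T_0)\to\Ext^2(T'_1,T_0)$. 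The left term vanishes by rigidity of $\fT$, and the right term vanishes because $\cA$ is hereditary. Hence $\Ext^1(C,T_0)=0$, and there is a map $\tilde f:B\to T_0$ with $\tilde f u=f$.

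Next define $g=\begin{bmatrix}\tilde f\\ hv\end{bmatrix}:B\to T_0\oplus T'_0$. The upper two squares then commute: the left one gives $g u=\begin{bmatrix}f\\0\end{bmatrix}$, since $vu=0$, and the right one gives $[0,1]g=hv$. The middle row is the split exact sequence.

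For the bottom part, take $g'$ to be the cokernel of $g$. Since $f$ and $h$ are monic, the snake lemma applied to the morphism of short exact sequences given by $(f,g,h)$ shows two things. First, $\ker g=0$. Second, there is an exact sequence $0\to\cok f\to\cok g\to\cok h\to 0$, that is, $0\to T_1\to\cok g\to T'_1\to 0$, with the induced maps making the lower squares commute.

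It remains to identify $\cok g$ with $T_1\oplus T'_1$ so that the maps are the canonical inclusion and projection. The extension $0\to T_1\to \cok g\to T'_1\to 0$ is an element of $\Ext^1(T'_1,T_1)$, which is $0$ by rigidity. Hence it splits, and choosing a splitting identifies $\cok g\cong T_1\oplus T'_1$ with the bottom row in the stated form. Under this identification $g'$ is a $2\times 2$ matrix whose rows restrict correctly, and all columns are exact by construction.

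The only nontrivial step is the extension of $f$ along $u$, which is exactly where heredity and rigidity enter. Everything else is diagram chasing.
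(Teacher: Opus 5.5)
Your proposal is correct and takes essentially the same approach as the paper. You get $\Ext^1(C,T_0)=0$ from rigidity and heredity, lift $f$ along $u$, take $g$ with components $\tilde f$ and $hv$, apply the snake lemma, and split $0\to T_1\to\cok g\to T'_1\to 0$ using $\Ext^1(T'_1,T_1)=0$; you are also slightly more explicit than the paper about $g$ being monic and the lower squares commuting after the identification, with $g'$ upper triangular with diagonal entries $f'$ and $h'$.
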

\begin{proof}
Applying $\Hom(-,T_0)$ to the sequence  $0\to C\stackrel{h}\to T'_0\stackrel{h'}\to T'_1\to 0$, we know $0=\Ext^1(T'_0,T_0)\to \Ext^1(C,T_0)\to 0$ is exact. Hence $\Ext^1(C,T_0)=0$. Then applying $\Hom(-,T_0)$ to the sequence $0\to A\to B\to C\to 0$, from the exact sequence $\Hom(B,T_0)\to \Hom(A,T_0)\to \Ext^1(C,T_0)=0$, we know $f$ factors through $u$. Put $g_1:B\to T_0$ such that $f=g_1u$ and $g_2=hv$. It is easy to check that $g=\tiny{\begin{bmatrix} g_1\\g_2\end{bmatrix}}:B\to T_0\oplus T'_0$ makes above two squares in the diagram commutative. By Snake Lemma, there is an exact sequence $0\to T_1\to \cok g\to T'_1\to 0$ associated with the cokernels. The sequence splits since $\fT$ is rigid, which gives rise to the desired commutative diagram.
\end{proof}

\begin{thm}\label{thm:MRCB_tilt} The following statements hold.
\begin{enumerate}
\item If $\fT$ is a MRIC subcategory of $\rep(\bI_{m,n}^*)$, then it is a weak tilting subcategory. 
\item If $\fT$ is a MRCB subcategory of $\rep(\bI_{m,n}^*)$, then it is a tilting subcategory. Moreover, $\Gen(\fT)$ is a torsion class.
\end{enumerate}
\end{thm}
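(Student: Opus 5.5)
By Corollary \ref{cor:MRIC_MCIC}, every MRIC subcategory $\fT$ is maximal rigid in $\rep(\bI_{m,n}^*)$, which is hereditary with enough projective objects. So for (1) I will apply Proposition \ref{prop:wtilt_app}: it suffices to show that each indecomposable projective $P_x=k_{|x,\omega|}$ has a monic left $\fT$-approximation, where $\omega$ is the right end of $\bI_{m,n}^*$ (either $n$ or $n^-$). We have $\Hom(P_x,k_J)\neq 0$ if and only if $x\in J$, and $\Hom(k_{J'},k_J)\neq 0$ if and only if $\mathrm{start}(J)\preceq\mathrm{start}(J')\preceq\mathrm{end}(J)\preceq\mathrm{end}(J')$.

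\emph{Monicity.} The whole interval $\bI_{m,n}^*$ contains every other interval, so by Lemma \ref{lem:key}(2) it is compatible with all of $\cM=\mathcal S(\fT)$. By maximality $k_{\bI_{m,n}^*}\in\fT$, and $P_x\to k_{\bI_{m,n}^*}$ is already injective. So any left approximation that has $k_{\bI_{m,n}^*}$ as a summand is monic.

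\emph{Approximation.} I will show that every nonzero map $P_x\to k_J$ with $J\in\cM$ factors through one of finitely many objects. Suppose $x\in(i,i+1)$ or $x$ is an integer. The intervals of $\cM$ containing $x$ fall into the following kinds, each handled as indicated.
\begin{itemize}
\item Integral intervals: there are only finitely many of them, so I take them all.
\item Lower coverings $|\sigma_j,y|$ or $|\sigma_j,y^-|$ of some $(j,j+1)$ with $x$ in the interval: these maps factor through the roof $k_{|\sigma_j,\tau_j|}$, using Theorem \ref{thm:MCIC} and the Hom criterion above.
\item Upper coverings $|y,\tau_j|$ or $|y^+,\tau_j|$ with $j<\lfloor x\rfloor$: these factor through $k_{|j+1,\tau_j|}$, which lies in $\fT$ by Lemma \ref{lem:bound_int}(1) and contains $x$.
\item Upper coverings of the same piece $(i,i+1)$ with $y\preceq x$: these factor through $k_{|x,\tau_i|}\in\fT$ itself.
\end{itemize}
The direct sum of $k_{\bI_{m,n}^*}$ and all of these objects is then a monic left approximation of $P_x$, and weak tilting follows from Proposition \ref{prop:wtilt_app}.

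For (2), weak tilting holds by (1), so it remains to prove contravariant finiteness. It is enough to treat an indecomposable $M=k_J$, since right approximations of a finite direct sum are sums of right approximations. The objects $k_{J'}\in\fT$ with $\Hom(k_{J'},k_J)\neq0$ are the following, each handled as indicated.
\begin{itemize}
\item Integral ones: finitely many, taken directly.
\item Consistent below families $|\sigma_i,y|$ or $|\sigma_i,y^-|$ with $\mathrm{end}(J)\preceq y$: here the start is fixed and only the end varies, and the maps are compatible with shrinking the end. So all of them factor through the smallest member $k_{|\sigma_i,\mathrm{end}(J)|}$ (or the appropriate $y^-$ variant). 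This member lies in $\fT$ by (C1) when $\|\mathrm{end}(J)\|\in(i,i+1)$, and otherwise by Lemma \ref{lem:inf_int_dual} or integrality.
\end{itemize}
So a finite right approximation exists. This is exactly where consistency above would fail: there the start of a family $|y,\tau_i|$ varies in an open interval, and no maximal admissible start would be available. Once $\fT$ is tilting, Lemma \ref{lem:weak_tilt_tor} gives that $(\Gen(\fT),\fT^{\perp})$ is a torsion pair, so $\Gen(\fT)$ is a torsion class.

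The main obstacle I expect is the bookkeeping of decorated endpoints in the factorization claims. In particular, I need to check in each case that the chosen "extremal" interval really lies in $\cM$, using Lemmas \ref{lem:bound_int}, \ref{lem:inf_int} and their duals, and that the Hom-nonvanishing inequalities hold with the correct $\pm$ decorations.
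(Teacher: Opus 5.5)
Your part (1) is correct, and it takes a different route from the paper. The paper splits an indecomposable projective $P$ by a short exact sequence $0\to M\to P\to N\to 0$. Here $M$ is an integral projective, whose coresolution comes from the tilting $kQ^*_{m,n}$-module via Lemma \ref{lem:tilt}. The quotient $N$ is either $k_{|x,\tau_i|}\in\fT$, or $k_{|x,\psi(\sigma_i)|}$ with the explicit coresolution $0\to N\to k_{|\sigma_i,\psi(\sigma_i)|}\to k_{|\sigma_i,x^-|}\to 0$. The paper then glues these with the Horseshoe Lemma \ref{lem:horseshoe}. You instead write down the monic left approximation directly. Monicity comes from $k_{\bI^*_{m,n}}\in\fT$, and the infinite families factor through roofs, through $k_{|j+1,\tau_j|}$ (Lemma \ref{lem:bound_int}(1)), or through $k_{|x,\tau_i|}$. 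Your case list is exhaustive, since an upper family over $(j,j+1)$ misses $x$ when $j>\lfloor x\rfloor$, or when $j=\lfloor x\rfloor$ and $x$ is integral. Maximal rigidity (Corollary \ref{cor:MRIC_MCIC}) and Proposition \ref{prop:wtilt_app} then place the cokernel in $\fT$. This avoids the tilting theory of $kQ^*_{m,n}$ and the Horseshoe Lemma altogether. The price is that your $T_1$ is only implicit, whereas the paper's $T_0,T_1$ are explicit.

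Part (2) follows the paper's strategy: group the maps by left endpoint and factor through the member with the smallest right endpoint. However, the key step is justified incorrectly. When $\mathrm{end}(J)\preceq i$, the lower family $\{|\sigma_i,y|,|\sigma_i,y^-|:y\in(i,i+1)\}$ has no smallest member, because its infimum $i^+$ is not an admissible right end. Moreover, $k_{|\sigma_i,\mathrm{end}(J)|}$ need not lie in $\fT$. For example, on $\bI_{0,2}$ take $\widehat\fT$ with intervals $[0,2],[0,1],(0,1],[1,1],[2,2]$. These are five pairwise compatible integral intervals, hence a tilting set for the five-vertex quiver $Q_{0,2}$. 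The roof of $(1,2)$ is $[0,2]$, so the MRCB completion contains $k_{[0,y]}$ for $y\in(1,2)$, and each of these maps nonzero to $k_{[0,0]}$. Yet $k_{[0,0]}\notin\fT$, because $\Ext^1(k_{[0,0]},k_{(0,1]})\neq 0$. Lemma \ref{lem:inf_int_dual} does not help either. It gives the \emph{largest} right end $\psi(\sigma_i)$, while a map $k_{|\sigma_i,y|}\to k_J$ can only factor through objects whose right end is $\preceq y$. The correct input is Lemma \ref{lem:bound_int_dual}(1). Since $\sigma_i\preceq\mathrm{end}(J)\preceq i$, it gives $k_{|\sigma_i,i|}\in\fT$, which is integral, and every such map factors through the epimorphism $k_{|\sigma_i,y|}\to k_{|\sigma_i,i|}$. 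This is exactly the paper's argument that $\omega_j=\inf\cR_j$ is attained, and with it your part (2) is complete.
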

\begin{proof}
 (1) We just prove for the continuous quiver $\bI_{m,n}$. Arguments for $\bI_{m,n}^{(+,-,\pm)}$ are similar.

Assume   $P=k_{[x,n]}$ is an indecomposable object in $\rep(\bI_{m,n})$, where $x\in(i,i+1)$ and $|\sigma_i,\tau_i|$ is the roof of $(i,i+1)$. 

Case 1: $k_{|\sigma_i,x|}\in\fT$. By Lemma \ref{lem:inf_int_dual}, there is an integral module $K=k_{|\sigma_i,\psi(\sigma_i)|}\in\fT$, where $\psi(\sigma_i)=\sup\{\gamma\in\bR^\pm: k_{|\sigma_i,\gamma|}\in\fT\}$. 
Denote by $M=k_{|\suc(\psi(\sigma_i)),n|}$ (set $M=0$ if $\psi(\sigma_i)=n$) and $N=k_{|x, \psi(\sigma_i)|}$. Then there is an exact sequence:
$$
0\to M\to P\to N\to 0.
$$
Since $M$ is a projective object in $\bI_{m,n}\Int$ and $\widehat\fT=\fT\cap\bI_{m,n}\Int$ is maximal rigid, due to the correspondence between tilting modules and maximal rigid subcategories of $\bI_{m,n}\Int$ (Lemma \ref{lem:tilt}), there is an exact sequence $0\to M \to T_0\to T_1\to 0$ for some integral representations $T_0$, $T_1\in \fT$.  Moreover, $N$ satisfies the weak tilting condition (W3), since it admits an exact sequence:
$
0\to N\to K \to k_{|\sigma_i,x^-|}\to 0,
$
with $K,k_{|\sigma_i,x^-|}\in\fT$. We can conclude that $P$ also satisfies (W3) by  Lemma \ref{lem:horseshoe} and Proposition \ref{prop:wtilt_app}.

Case 2: $k_{|x,\tau_i|}\in\fT$. Denote by  $M'=k_{|\suc(\tau_i), n|}$. Then there is an exact sequecne:
$$
0\to M'\to P\to k_{|x,\tau_i|}\to 0.
$$
Since $M'$ is a projective object in $\bI_{m,n}\Int$, for the same reason, there is an exact sequence $0\to M' \to T'_0\to T'_1\to 0$ for some integral representations $T'_0$, $T'_1\in \fT$. Moreover,  $k_{|x,\tau_i|}\in\fT$. By Lemma \ref{lem:horseshoe} and Proposition \ref{prop:wtilt_app}, we conclude that the condition (W3) holds for $P$.

(2) For any interval module $k_{|\alpha,\beta|}$, we need to find a right $\fT$-approximation. First, notice that if there is a nonzero map $g:k_{|\gamma,\delta|}\to k_{|\alpha,\beta|}$ from some $k_{|\gamma,\delta|}\in\fT$, then $\alpha\preceq \gamma \preceq \beta\preceq \delta$. Furthermore, since $\fT$ is consistent below, $\|\gamma\|$ is an integer. Precisely,  $\gamma$ belongs to the set $\mathcal G=|\alpha,\beta| \cap \{m,m^+, (m+1), \cdots, (n-1)^+, n\}$. 

Suppose the finite set $\mathcal G$ contains $t$ elements $\kappa_1,\cdots, \kappa_t$. Now for each $\kappa_j\in \mathcal G$, consider the set $\mathcal R_j=\{\delta: k_{|\kappa_j,\delta|}\in\fT, \beta\preceq \delta\}$. 
If all sets $\cR_j=\emptyset$, then we know $\Hom(\fT, k_{|\alpha,\beta|})=0$. In this case, the zero map is a right $\fT$-approximation.

If $\cR_j\neq\emptyset$ for some $1\leq j\leq t$, set $\omega_j=\inf\cR_j$ and we claim that $\omega_j\in\cR_j$. Suppose otherwise, there is a sequence $\{\delta_k\}\in\cR_j$ approaching to $\omega_j$. Then by the consistency condition, $\delta_k\in (i,i+1)$ for some $m\leq i< n$ when $k\gg0$ and $\omega_j=i^+$. Meanwhile, $\beta\leq \delta_k$ implies $\beta\leq \omega_j=i^+$. But since $\beta$ is the upper bound of an admissible interval, $\beta\neq i^+$. Thus, it follows that $i\geq \beta\geq \kappa_j$.  However, Lemma \ref{lem:bound_int_dual} implies that $k_{|\kappa_j, i|}\in\fT$, which contradicts $\omega_j$ being the infimum. Therefore $\omega_j\in\cR_j$ and $k_{|\kappa_j,\omega_j|}\in\fT$.

Take $f_j:k_{|\kappa_j, \omega_j|}\to k_{|\alpha,\beta|}$ to be the morphism with an image $k_{|\kappa_j,\beta|}$. We claim that the map
$$
f=(f_j):\bigoplus\limits_{\substack{j=1\\ \cR_j\neq\emptyset}}^t k_{|\kappa_j, \omega_j|}\to k_{|\alpha,\beta|}
$$
is a right $\fT$-approximation. In fact, for any nonzero map $g:k_{|\gamma,\delta|}\to k_{|\alpha,\beta|}$, it follows that $\gamma=\kappa_j$ for some $j$ and $\omega_j\preceq\delta\in\cR_j$. Thus, there is an epimorphism $u: k_{|\gamma,\delta|}\to k_{|\kappa_j,\omega_j|}$ such that $g=f_ju$. Hence, $g$ factors through $f$.

So $\fT$ is a contravariantly finite and therefore a tilting subcategory. By Lemma \ref{lem:weak_tilt_tor}, $\Gen(\fT)$ is a torsion class.
\end{proof}

%

We remark that among all the interval consistent but not consistent below subcategories, some of them are indeed tilting subcategories and some of them are not.
\begin{exm}
{\rm (1)} Consider the MRCA subcategory $\fT$, of $\rep(\bI_{0,1}):$
$$
\fT=\add\{k_{[0,0]}, k_{[0,1]}, k_{[1,1]}, k_{[x,1]}, k_{(x.1]}:x\in(0,1)\}.
$$
The representation $k_{(0,1]}$ does not admit a right $\fT$-approximation. So $\fT$ is not tilting.

{\rm (2)}  Consider the MRCA subcategory $\fU$, of $\rep(\bI_{0,1}):$
$$
\fU=\add\{k_{(0,1]}, k_{[0,1]}, k_{[1,1]}, k_{[x,1]}, k_{(x,1]}:x\in(0,1)\}.
$$
It is easy to see that for any interval module $k_{|\alpha,\beta|}\in\rep(\bI_{0,1})$, the epimorphism $f:k_{|\alpha,1|}\to k_{|\alpha,\beta|}$ is a right $\fT$-approximation.
\end{exm}

Moreover, not every tilting subcategory of $\rep(\bI_{m,n}^*)$ is interval consistent. 
\begin{exm}
Consider the subcategory $\fT$, of $\rep([0,+\infty)):$
$$
\fT=\add\{ k_{[n,+\infty)}, k_{[n,n]}, k_{[n,x]}, k_{[n,x)}:n\in\mathbb N, x\in(n,n+1)\}.
$$
One can check $\fT$ is a tilting subcategory. 

In fact, it is easy to see that $\fT$ satisfies weak tilting conditions (W1) and (W2). For each projective interval module $k_{|\alpha,+\infty|}$ with $n\prec \alpha\prec n+1$ for some integer $n\geq0$,   there is an exact sequence $0\to k_{|\alpha,+\infty|}\to k_{|n,+\infty|}\to k_{|n, \pre(\alpha)]}\to 0$, where both $T_0=k_{|n,+\infty|}$ and $T_1=k_{|n, \pre(\alpha)]}\in\fT$. And projective interval modules $k_{|n,+\infty|}$ are already in $\fT$. Thus the weak tilting conditions (W3) holds.

To see $\fT$ is contravariantly finite, let $k_{|\alpha,\beta|}$ be an interval module in $\rep([0,+\infty))$. If $|\alpha,\beta|\cap\mathbb N=\emptyset$, then $\Hom(\fT, k_{|\alpha,\beta|})=0$ and the zero map is the right $\fT$-approximation. If $\beta=+\infty$ and $m-1\prec \alpha\preceq m$ for some integer $m\geq0$, then the embedding $f: k_{|m,+\infty|}\to k_{|\alpha,+\infty|}$ is the right $\fT$-approximation. Otherwise, assume $m$ and $n$ are respectively the smallest and largest integer in the interval $|\alpha,\beta|$. Denote by $f_1:k_{|n,\beta|}\to k_{|\alpha,\beta|}$ the embedding and $f_2:k_{|m,+\infty|}\to k_{|\alpha,\beta|}$ the map with an image $k_{|m,\beta|}$. Consider the map
$$
f=(f_1,f_2): k_{|n,\beta|}\oplus k_{|m,+\infty|}\to k_{|\alpha,\beta|}.
$$
It is easy to see that $\ker f=k_{|n,+\infty|}$, $\Ima f=k_{|m,\beta|}$ and $\cok f=k_{|\alpha, m^-|}$ (set $k_\emptyset=0$ in case $\alpha\in\mathbb N$). Applying $\Hom(\fT,-)$ and examining the exactness shows that $\Hom(\fT,f)$ is surjective. Therefore $f$ is a right $\fT$-approximation.

The subcategory $\fT$ is not interval consistent in the sense of Definition \ref{def:int_consist}, because it has infinitely many critical points. 
\end{exm}

As the last object in this section, we will discuss the relation between maximal rigidity and the condition $\fT^{\perp_1}=\Gen(\fT)$ of subcategories $\fT$ in $\rep(\bR)$. It is surprising that the implication below does not need interval consistency of $\fT$.

\begin{prop}\label{prop:max_rig_perp}
Let $\fT$ be a subcategory of $\rep(\bR)$. If $\fT$ is maximal rigid  then $\fT^{\perp_1}=\Gen(\fT)$.
\end{prop}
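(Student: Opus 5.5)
We prove the two inclusions separately. Throughout, $\fT$ is maximal rigid in $\rep(\bR)$ (or $\rep(\bI)$). Every object there is a finite direct sum of interval modules, and both $\fT^{\perp_1}$ and $\Gen(\fT)$ are closed under finite direct sums and summands. It therefore suffices to test indecomposable objects $k_J$, using Lemma \ref{lem:key} to compute $\Ext^1$ between intervals.

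\emph{Inclusion $\Gen(\fT)\subseteq\fT^{\perp_1}$.} Since $\rep(\bR)$ is hereditary, $\Ext^1(T,-)$ is right exact. If $T'\twoheadrightarrow X$ with $T'\in\fT$, then $\Ext^1(T,T')\to\Ext^1(T,X)$ is surjective. Because $\Ext^1(T,T')=0$ by rigidity, we get $\Ext^1(T,X)=0$. This direction uses only rigidity.

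\emph{Inclusion $\fT^{\perp_1}\subseteq\Gen(\fT)$.} Let $J=|\alpha,\beta|$ with $\Ext^1(\fT,k_J)=0$. We must show $k_J$ is a quotient of an object of $\fT$. For intervals, a nonzero map $k_I\to k_J$ with $I=|\gamma,\delta|$ exists iff $\alpha\preceq\gamma\preceq\beta\preceq\delta$, and it is surjective iff $\gamma=\alpha$. So $k_J\in\Gen(\fT)$ iff some $k_{|\alpha,\delta|}\in\fT$ with $\delta\succeq\beta$. Indeed, a finite sum mapping onto $k_J$ must have a summand that hits the point $\alpha$, and that summand's map is already onto.

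Define $\delta_0=\sup\{\delta\succeq\beta: k_{|\alpha,\delta|}\text{ compatible with }\fT\}$. The plan is to show that the candidate $K=k_{|\alpha,\delta_0|}$, or a suitable interval $|\alpha,\delta|$ with $\delta\succeq\beta$, is compatible with every member of $\fT$. Maximality then puts it in $\fT$, and it surjects onto $k_J$.

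The cleanest candidate I would try first is $\delta$ maximal: take $|\alpha,\delta|$ with $\delta$ the right end of $\bI$, which is projective-like. It is compatible on one side automatically: $\Ext^1(k_{|\alpha,\delta|},-)$ can only be nonzero against intervals starting strictly after $\alpha$ and ending after $\delta$. With $\delta$ maximal, $\suc(\delta)$ does not exist, which kills that case. The other direction, $\Ext^1(k_I,k_{|\alpha,\delta|})\neq0$, happens iff $\gamma\prec\alpha\preceq\suc(\delta_I)\preceq\delta$ for $I=|\gamma,\delta_I|\in\fT$. The hypothesis $\Ext^1(k_I,k_J)=0$ says that $\gamma\prec\alpha\preceq\suc(\delta_I)\preceq\beta$ fails. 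Thus the only bad intervals $I\in\fT$ are those with $\gamma\prec\alpha$ and $\beta\prec\suc(\delta_I)$, that is, $I$ overlapping $J$ from the left and extending past $\beta$.

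To handle these bad $I$, I would shrink the candidate. Let $\delta^*=\inf$ over bad $I$ of $\delta_I$ (using that infima exist in $\bR^\pm$), and use $|\alpha,\delta'|$ with $\delta'$ the largest decorated number such that $\suc(\delta')$ lies below every bad $\suc(\delta_I)$. Concretely, $|\alpha,\delta'|\subseteq I$ for every bad $I$, making them nested and hence compatible. One must still check that $\delta'\succeq\beta$, which holds since each bad $\delta_I\succeq\beta$. One must also check that shrinking does not create new incompatibilities. A new bad $I'$ would start strictly inside $(\alpha,\delta']$ and end beyond $\delta'$; it would then be incompatible with some bad $I$, or with the limiting member if the infimum is not attained. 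I expect this to be the main obstacle: when $\delta^*$ is a limit not attained in $\fT$, we must argue via the decorated order that $\suc(\delta')$ can still be chosen. Here I would mimic the infimum arguments of Lemma \ref{lem:inf_int}, splitting into cases $\suc(\varepsilon)\preceq\delta'$, $=\suc(\delta')$, and $\succ$. Once compatibility with all of $\fT$ is established, maximality gives $k_{|\alpha,\delta'|}\in\fT$, and hence $k_J\in\Gen(\fT)$.
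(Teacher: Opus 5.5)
Your approach is correct, and the interval you end up with is the same one the paper uses. What differs is how you certify that it lies in $\fT$.

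The paper works on the $\fT^{\perp_1}$ side. It takes $\gamma=\sup S$ with $S=\{\gamma'\succeq\beta: k_{|\alpha,\gamma'|}\in\fT^{\perp_1}\}$ and shows the supremum is attained, so $\Ext^1(\fT,k_{|\alpha,\gamma|})=0$ holds by construction. It then gets $\Ext^1(k_{|\alpha,\gamma|},\fT)=0$ abstractly. An offending $k_{|\delta,\epsilon|}\in\fT$ gives $0\to k_{|\delta,\epsilon|}\to k_{|\alpha,\epsilon|}\oplus k_{|\delta,\gamma|}\to k_{|\alpha,\gamma|}\to 0$. Closure of $\fT^{\perp_1}$ under extensions and summands then puts $\epsilon\succ\gamma$ into $S$, a contradiction.

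You work on the $\fT$ side, taking $\delta'$ to be the admissible version of $\inf\delta_I$ over the bad intervals $I$. One checks that $\gamma'\succeq\beta$ lies in $S$ exactly when $\gamma'\preceq\delta_I$ for every bad $I$. Hence $|\alpha,\delta'|$ is precisely the paper's $|\alpha,\gamma|$.

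Your route describes the covering object explicitly in terms of $\fT$ and uses only Lemma \ref{lem:key} plus rigidity of $\fT$: you exhibit two incompatible members of $\fT$. The paper's extension-closure argument needs no case split for the second vanishing; its only limit argument is the attainment of $\sup S$.

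The step you flag as the main obstacle is short, and no analogue of the case analysis in Lemma \ref{lem:inf_int} is needed. Let $I'=|\mu,\nu|\in\fT$ with $\alpha\prec\mu\preceq\suc(\delta')\preceq\nu$.

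\emph{Infimum attained.} Suppose it is attained by a bad $I_0=|\gamma_0,\delta'|$. Then $\gamma_0\prec\mu\preceq\suc(\delta')\preceq\nu$, so $\Ext^1(k_{I_0},k_{I'})\neq 0$.

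\emph{Infimum not attained.} Then it equals $b^+$ for some real $b$, and $\delta'=b$. Admissibility forces $\nu\succ b^+$, so some bad $I=|\gamma_I,\delta_I|$ has $b<\|\delta_I\|<\|\nu\|$. This gives $\gamma_I\prec\mu\preceq b^+\prec\suc(\delta_I)\preceq\nu$.

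In both cases rigidity of $\fT$ is violated. There is no ``limiting member'' in $\fT$, but an approximating one suffices.

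For the other direction, state explicitly that any $I'=|\mu,\nu|\in\fT$ with $\Ext^1(k_{I'},k_{|\alpha,\delta'|})\neq 0$ is already bad, since that condition is monotone in the right endpoint. So $\nu\succeq\inf\delta_I$, which contradicts $\suc(\nu)\preceq\delta'$.

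Finally, drop your preliminary $\delta_0$. It is a supremum over a set that may a priori be empty, and it plays no role in the argument.
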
 
\begin{proof}
It is clear that $\Gen(\fT)\subseteq\fT^{\perp_1}$. It suffices to show the other inclusion.

For any interval module $k_{|\alpha,\beta|}\in\fT^{\perp_1}$, consider the nonempty set
$$
S=\{ \gamma'\in\bR^\pm:  \gamma'\succeq\beta, k_{|\alpha,\gamma'|}\in\fT^{\perp_1}\}.
$$
We claim that $\gamma:=\sup S\in S$.

Suppose $\gamma\not\in S$. Then there is a limit sequence $\{\gamma_k\}$ in $S$ approaching to $\gamma$. Hence, $\gamma=x^-$ for some real number $x$. Meanwhile,   there exists some $k_{|\mu,\nu|}\in\fT$ such that 
$\Ext^1(k_{|\mu,\nu|},k_{|\alpha,\gamma|})\neq 0$.  By Lemma \ref{lem:key}, it follows that $\mu\prec\alpha\preceq\suc(\nu)\preceq \gamma$.  
But $\suc(\nu)\preceq \gamma=x^-$ implies $\nu\prec x^-$ and hence $\nu\prec \gamma_k$ for some $k\gg0$. Therefore, we have an inequality $\mu\prec\alpha\preceq\suc(\nu)\preceq \gamma_k$. So $\Ext^1(k_{|\mu,\nu|},k_{|\alpha,\gamma_k|})\neq 0$, which is a contradiction.
 
Next, we will prove $\Ext^1(k_{|\alpha,\gamma|},\fT)=0$. Suppose otherwise, there is $k_{|\delta,\epsilon|}\in\fT$ such that $\Ext^1(k_{|\alpha,\gamma|},k_{|\delta,\epsilon|}) \neq 0$. Then $\alpha\prec \delta\preceq \suc(\gamma)\preceq \epsilon$ and there is an exact sequence, where we set $k_\emptyset=0$ if $\delta=\suc(\gamma)$:
$$
0\to k_{|\delta,\epsilon|}\to k_{|\alpha,\epsilon|} \oplus k_{|\delta,\gamma|}\to k_{|\alpha,\gamma|}\to 0.  
$$
Since $\fT^{\perp_1}$ is closed under extensions and direct summands, we conclude that $k_{|\alpha,\epsilon|}\in\fT^{\perp_1}$. But this implies that  $\epsilon\in S$, which contradicts with $\gamma$ being the supremum.

In summary, we have shown that $\Ext^1(\fT,k_{|\alpha,\gamma|})=0=\Ext^1(k_{|\alpha,\gamma|},\fT)$. Since $\fT$ is maximal rigid, it implies that $k_{|\alpha,\gamma|}\in\fT$.

Finally, there is clearly an epimorphism $k_{|\alpha,\gamma|}\to k_{|\alpha,\beta|}$. i.e. $k_{|\alpha,\beta|}\in\Gen(\fT)$. Therefore  $\fT^{\perp_1}\subseteq\Gen(\fT)$.
\end{proof}
 
The converse of Proposition \ref{prop:max_rig_perp} is not true even if $\fT$ is interval consistent.
\begin{exm}\label{exm:max_rig_not_perp1}
Let $\fT=\add\{k_{(0,2)}, k_{[1,1]}, k_{(0,x)}, k_{(0,x]}: x\in(0,1)\cup(0,2)\}$ be a subcategory of $\rep(\bI_{0,2}^\pm)$. It is straightforward to check that $\fT^{\perp_1}=\Gen\fT=\add(\fT\cup\{k_{(0,1)}, k_{(0,1]}\})$. But $\fT$ is not maximal rigid.
\end{exm}

Proposition \ref{prop:max_rig_perp} is not true for subcategories of an arbitrary hereditary abelian category. 
\begin{exm}\label{exm:max_rig_not_perp2}
Let $\cA= k[x]{\rm\text{-}nil}$ consists of nilpotent representations of $k[x]\modu$. It is easy to check that it is a hereditary abelian category and the nonzero indecomposable objects are classified by $M(n)\cong k[x]/\langle x^n\rangle$ for $n>1$, which are not rigid. Thus, $\{0\}$ is a maximal rigid subcategory of $\cA$. However, $0^{\perp_1}=\cA\neq \Gen(0)=0$.
\end{exm}

It is an open question if Proposition \ref{prop:max_rig_perp} holds for hereditary abelian categories with enough projective objects.

\section{Support tilting and torsion classes}
\subsection{Integrally supported maximal rigid subcategories}
The support of a finitely generated left module $M$ over a finite dimensional algebra $\Lambda$, is the set of simple $\Lambda$-modules that occur in the composition series of $M$. Following \cite{ONFR}, the support algebra of $M$  is $\Lambda_M:=\Lambda/I_M$, where $I_M$ is the two-sided ideal generated by all idempotents $e$ such that $eM=0$. A left $\Lambda$-module $T$ is called {\bf support tilting}, if $T$ is tilting considered as a $\Lambda_T$-module.




Let $Q$ be a finite acyclic quiver and $T$ be a support tilting  $kQ$-module. Then the support algebra $(kQ)_T\cong kC$, where $C$ is the minimal subquiver of $Q$ on which $T$ is supported. Let $C_1,\cdots, C_s$ be connected components of $C$ and denote by $T|_{C_i}$ the largest summand of $T$ supported on $C_i$ which will be referred as {\bf support components}. With these notations we have the following result.

\begin{prop}\label{prop:supp_tilt}
	A partial tilting module $T$ is support tilting if and only if each support component $T|_{C_i}$ is a tilting $kC_i$-module.
\end{prop}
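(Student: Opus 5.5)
The plan is to reduce everything to the decomposition of the support algebra. Write $\Lambda=kQ$ and $e=\sum_{v\in C_0}e_v$, where $C$ is the minimal (full) subquiver on which $T$ is supported. Then the support algebra is
\[
\Lambda_T=\Lambda/\langle 1-e\rangle \cong kC \cong kC_1\times\cdots\times kC_s .
\]
This uses that $Q$ is acyclic, so the quotient by the idempotents outside $C$ is the path algebra of the full subquiver $C$. It also uses that there are no arrows between distinct components $C_i$, so $kC$ is a product of the algebras $kC_i$ with central idempotents $\varepsilon_i=\sum_{v\in (C_i)_0}e_v$. Under this product decomposition $kC\modu\simeq\prod_i kC_i\modu$, and $T$ corresponds to the tuple $(\varepsilon_iT)_i=(T|_{C_i})_i$. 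Here I would check that $\varepsilon_iT$ is exactly the largest summand of $T$ supported on $C_i$. This holds because every indecomposable summand of $T$ has connected support, hence lies in a single $C_i$.

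Next I verify the three tilting axioms componentwise. Over a product algebra, $\pd_{kC}T=\max_i \pd_{kC_i}T|_{C_i}$ and $\Ext^1_{kC}(T,T)=\bigoplus_i\Ext^1_{kC_i}(T|_{C_i},T|_{C_i})$. Moreover, an exact sequence $0\to kC\to T_0\to T_1\to 0$ with $T_0,T_1\in\add T$ exists if and only if, for each $i$, one exists for $kC_i$ with terms in $\add T|_{C_i}$. The forward direction multiplies the sequence by $\varepsilon_i$, which is exact. The converse takes the direct sum of the componentwise sequences. Hence $T$ is tilting over $kC$ if and only if each $T|_{C_i}$ is tilting over $kC_i$, which is the claim.

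Alternatively, since all algebras here are hereditary, I can avoid (T3) by counting summands. The partial tilting hypothesis gives (T1) and (T2) over $kQ$. These pass to $kC$ because $kC\modu$ is an extension-closed full subcategory of $kQ\modu$ (a quotient by an idempotent ideal of a hereditary algebra), so $\Ext^1$ agrees. A partial tilting $kC_i$-module is then tilting if and only if its number of nonisomorphic indecomposable summands equals $|(C_i)_0|$. Summing over $i$ gives the same equivalence.

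The only step needing care is the identification $\Ext^1_{kC}=\Ext^1_{kQ}$ on $kC$-modules, equivalently that rigidity over $kQ$ transfers to $\Lambda_T$. I would justify it either by the idempotent-ideal argument, or directly from the standard formula for $\Ext^1$ of quiver representations, which only involves vertices and arrows in the support.
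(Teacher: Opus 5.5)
Your proposal is correct and follows the paper's argument: you identify the support algebra with $kC\cong\prod_i kC_i$, split $T\cong\bigoplus_i T|_{C_i}$ along the central idempotents, and check the tilting axioms factor by factor, supplying the details the paper leaves implicit. Your alternative summand-counting route is also valid; note that neither your first route nor the paper's proof actually uses the partial tilting hypothesis, which is needed only in the counting version.
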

\begin{proof}
As a $kC\cong \prod\limits_{i=1}^s kC_i$-module, $T\cong \bigoplus\limits_{i=1}^s T|_{C_i}$. Hence, $T$ is a tilting module if and only if $T|_{C_i}$ is a tilting $kC_i$-module for each component.
\end{proof} 
In particular, when $T$ is a support tilting $kQ_{m,n}^*$-module, the support algebra of $T$ is isomorphic to a product $\prod\limits_{i=1}^s kQ_{m_i,n_i}^* $ where the subquivers $Q_{m_i,n_i}^*$'s are disjoint intervals on $Q_{m,n}^*$. Furthermore, they must be mutually compatible, since $\Ext^1(T|_{Q_{m_i,n_i}^*},T|_{Q_{m_j,n_j}^*})=0$ for all support components. This leads to the following analogous notion for continuous representations.

\vskip5pt
This componentwise description has a direct continuous analogue. The support of a subcategory $\fT$ of $\rep(\bR)$ is $\Supp \fT:=\{x\in\bR: T(x)\neq 0$ for some $T\in\fT\}$. A subcategory $\fT$ is called {\bf finitely supported}, if $\Supp \fT=\bJ_1\cup \bJ_2\cup \cdots \cup \bJ_s$ is a disjoint union of finitely many mutually compatible intervals. Let $\fT|_{\bJ_i}$ be the full subcategory of $\fT$ consisting of objects whose supports are contained in $\bJ_i$, which will be referred to as the support component of $\fT$.
A  subcategory $\fT$ is called finitely supported maximal rigid (or weak tilting, etc.) if $\fT$ is finitely supported and each support component $\fT|_{\bJ_i}$ is maximal rigid (or weak tilting,  etc.) on $\bJ_i$.

Especially for continuous quivers of type $A$, a subcategory $\fT$ is said to be {\bf integrally supported} if $\Supp \fT=\bI_{m_1,n_1}^*\cup \bI_{m_2,n_2}^*\cup \cdots \cup \bI_{m_s,n_s}^*$ is a disjoint union of  mutually compatible integral intervals. Here, for each component, $*$ could be independently one of the four interval types. With the help of Proposition \ref{prop:homeo_int}, one can always find a categorical equivalence, which sends a finitely supported subcategory of $\rep(\bR)$ to an integrally supported subcategory of $\rep(I_{m,n}^*)$. Similarly, a subcategory $\fT$ of $\rep(I_{m,n}^*)$ is said to be integrally supported maximal rigid (or weak tilting, ect.) if $\fT$ is integrally supported and each support component $\fT|_{\bI_{m_k,n_k}}$ is maximal rigid (or weak tilting,  ect.) of $\rep(\bI_{m_k,n_k}^*)$.
 
 
\begin{thm}\label{thm:supp_MRCB}
There is a bijection between the set of isomorphism classes of support tilting $kQ_{m,n}^*$-modules and the set of integrally supported MRCB subcategories of $\rep(\bI_{m,n}^*)$. Furthermore, this correspondence sends each support tilting module $T$ to an integrally supported MRCB subcategory $\fT$ such that $\widehat\fT=\add\fD_{m,n}^*(T)$.
\end{thm}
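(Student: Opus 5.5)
The plan is to reduce the bijection componentwise to the non-support case, which is already settled by Corollary \ref{cor:rigid_complete} and Lemma \ref{lem:tilt}. Start with a support tilting $kQ_{m,n}^*$-module $T$. As noted after Proposition \ref{prop:supp_tilt}, its support algebra is $\prod_{k=1}^s kQ^*_{m_k,n_k}$, where the $Q^*_{m_k,n_k}$ are disjoint subintervals of the vertex poset $\bZ^\sharp$. Each support component $T|_{Q^*_{m_k,n_k}}$ is a tilting $kQ^*_{m_k,n_k}$-module. Under $\theta$, each $Q^*_{m_k,n_k}$ corresponds to an integral interval $\bI^*_{m_k,n_k}=\theta(Q^*_{m_k,n_k})$ of one of the four types, and these intervals are pairwise disjoint. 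The equivalence $\fD^*_{m,n}$ of Lemma \ref{lem:eqv_A} restricts on each block to $\fD^*_{m_k,n_k}$. So Lemma \ref{lem:tilt} gives that $\add\fD^*_{m_k,n_k}(T|_{Q^*_{m_k,n_k}})$ is a maximal rigid subcategory of $\bI^*_{m_k,n_k}\Int$.

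By Corollary \ref{cor:rigid_complete}, each such subcategory has a unique MRCB completion $\fT_k$ in $\rep(\bI^*_{m_k,n_k})$: one adds $k_{|\sigma_i,x|}$ and $k_{|\sigma_i,x^-|}$ for $x\in(i,i+1)$, using the roofs of the block. Define $\fT=\add(\bigcup_k\fT_k)$. To see that $\fT$ is integrally supported, note that $\Supp\fT=\bigcup_k\bI^*_{m_k,n_k}$, since each maximal rigid $\fT_k$ contains the interval $\bI^*_{m_k,n_k}$ itself (it is a roof-type element, projective-injective in the block). The blocks are mutually compatible: being disjoint, they are compatible unless adjacent, i.e.\ $\bI_{m_k,n_k}^*\cup\bI^*_{m_l,n_l}$ forms an interval; but then the two supports would be connected in $Q^*_{m,n}$ and hence form a single component $C_i$. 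Finally, $\fT|_{\bI^*_{m_k,n_k}}=\fT_k$ is MRCB, and $\widehat\fT=\add\fD^*_{m,n}(T)$ holds by construction.

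For the inverse, take an integrally supported MRCB subcategory $\fT$ with components $\fT|_{\bJ_k}$, $\bJ_k=\bI^*_{m_k,n_k}$. By Corollary \ref{cor:rigid_complete}, each $\widehat{\fT|_{\bJ_k}}$ is maximal rigid in $\bJ_k\Int$, which is a full subcategory of $\bI^*_{m,n}\Int$ since $\bJ_k$ is integral. Lemma \ref{lem:tilt} then yields tilting $kQ^*_{m_k,n_k}$-modules $T_k$, and we set $T=\bigoplus_k T_k$. Three things must be checked for $T$: it is basic, its support is exactly $\bigsqcup Q^*_{m_k,n_k}$ with these as connected components (non-adjacency again follows from the compatibility of the $\bJ_k$), and $\Ext^1(T_k,T_l)=0$ for $k\ne l$. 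The last point holds because objects in different components have disjoint, non-adjacent supports, so Lemma \ref{lem:key}(1) gives vanishing. Proposition \ref{prop:supp_tilt} then shows that $T$ is support tilting.

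The two constructions are mutually inverse because of the uniqueness of MRCB completions in Corollary \ref{cor:rigid_complete} together with Lemma \ref{lem:tilt}. The main obstacle is the bookkeeping between the notion of component on the quiver side and on the continuous side. One has to show that connected components of the support of $T$ in $Q^*_{m,n}$ correspond under $\theta$ exactly to the maximal integral intervals $\bJ_k$, with the correct open or closed endpoint types; for instance, the vertex $i^\sharp$ corresponds to the open interval $(i,i+1)$. One also has to show that "mutually compatible" for the $\bJ_k$ is equivalent to non-adjacency of the quiver components. I would verify this first by a direct check with $\theta$ and Lemma \ref{lem:key}(2).
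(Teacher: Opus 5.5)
Your proposal is correct and follows the paper's proof: both directions are built componentwise from Lemma \ref{lem:tilt} and the uniqueness of MRCB completions in Corollary \ref{cor:rigid_complete}, and the inverse property comes from that uniqueness. You also verify several points the paper leaves implicit, namely that $\Supp\fT$ is the disjoint union of mutually compatible blocks, the partial-tilting hypothesis of Proposition \ref{prop:supp_tilt}, and that both composites are identities (the paper checks only one composite and then appeals to finiteness of the sets).
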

\begin{proof}
For a support tilting  $kQ_{m,n}^*$-module $T$, assume its support algebra is $\prod\limits_{i=1}^s kQ_{m_i,n_i}^*$. According to Lemma \ref{lem:tilt}, $\add\fD_{m_i,n_i}^*(T|_{Q_{m_i,n_i}^*})$ is a maximal rigid subcategory of $\bI_{m_i,n_i}^*\Int$, which can be completed uniquely into a MRCB subcategory $\fT_i$ of $\rep(\bI_{m_i,n_i}^*)$ due to Corollary \ref{cor:rigid_complete}. We define the map 
$$
\Phi: \{\begin{smallmatrix}\text{iso. classes of support}\\ \text{tilting\ } kQ_{m,n}^*\text{-modules} \end{smallmatrix} \} \longrightarrow   \{\begin{smallmatrix}\text{integrally supported MRCB}\\ \text{subcategories of } \rep(\bI_{m,n}^*) \end{smallmatrix}\} 
$$
by $\Phi([T])=\add (\bigcup\limits_{i=1}^s\fT_i)$. It is easy to see that $$\widehat\fT=\add(\bigcup\limits_{i=1}^s\widehat\fT_i)= \add(\bigoplus\limits_{i=1}^s\fD_{m_i,n_i}^*(T|_{Q_{m_i,n_i}^*}))=\add\fD_{m,n}^*(T).$$

Conversely, for an integrally supported MRCB subcategory $\fT$, assume $\Supp(\fT)=\bigcup\limits_{i=1}^s \bI_{m_i,n_i}^*$. By definition,  $\fT|_{\bI_{m_i,n_i}^*}$ is a MRCB subcategory of $\rep(\bI_{m_i,n_i}^*)$. Thus $\widehat{\fT_i}=\fT|_{\bI_{m_i,n_i}^*}\cap\bI_{m_i,n_i}^*\Int$ is a maximal rigid subcategory of $\bI_{m_i,n_i}^*\Int$.  According to Lemma \ref{lem:tilt}, there exists a tilting $kQ_{m_i,n_i}^*$-module $T_i$ such that $\add\fD_{m_i,n_i}^*(T_i)\cong\widehat{\fT_i}$.  We define the map 
$$
\Psi: \{\begin{smallmatrix}\text{integrally supported MRCB}\\ \text{subcategories of } \rep(\bI_{m,n}^*) \end{smallmatrix}\} 
\longrightarrow \{\begin{smallmatrix}\text{iso. classes of support}\\ \text{tilting\ } kQ_{m,n}^*\text{-modules} \end{smallmatrix} \} $$
by $\Psi(\fT)=[\bigoplus\limits_{i=1}^sT_i]$.

Because $\widehat{\fT_i}$ can only be completed into the MRCB subcategory $\fT|_{\bI_{m_i,n_i}^*}$, we conclude that $\Phi\circ\Psi$ is an identity. Since the maps are between finite sets, they are bijections.
\end{proof}

By Theorem \ref{thm:MRCB_tilt}, an integrally supported MRCB subcategory is always an integrally supported tilting subcategory. Next we  will study the corresponding torsion class.

\subsection{Integrally generated torsion classes}  
 A torsion class $\cT$ in an abelian category is {\bf finitely generated} if there exists and object $T$, which is a finite sum of indecomposable objects, such that $\cT=\Gen(T)$. We say an object $X$ in $\rep(\bI_{m,n}^*)$ is  {\bf integrally generated} if $X\in\Gen(T)$,  for some integral representation $T$. Similarly, a torsion class $\cT$ of  $\rep(\bI_{m,n}^*)$ is  integrally generated if $\cT=\Gen(\widehat T)$,  for some integral representation $\widehat T$. 
 
 Using Proposition \ref{prop:homeo_int}, one can show the following result, which indicates the ubiquity of integrally generated torsion classes.
 
 \begin{prop}
 Let $\bI$ be an interval on $\bR$ and $\fT$ be a finitely generated torsion class in $\rep(\bI)$. Then there exists an equivalence $F:\rep(\bI)\to\rep(\bI_{m,n}^*)$ induced by some order preserving homeomorphism  $f:\bI\to\bI_{m,n}^*$ such that  $F(\fT)$ is integrally generated.
 \end{prop}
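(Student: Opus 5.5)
The plan is to write the finitely generated torsion class as $\fT=\Gen(T)$ with $T=\bigoplus_{j=1}^{r}k_{|\alpha_j,\beta_j|}$ a finite direct sum of interval modules in $\rep(\bI)$. This is possible because every object of $\rep(\bI)$ is a finite direct sum of interval modules. The whole argument rests on one observation: $F$, being an equivalence induced by $f\in\Homeo_+(\bR)$ (Proposition \ref{prop:homeo}), sends interval modules to interval modules. Under the convention $FM(x)=M(f(x))$ one has $F(k_{|\alpha,\beta|})=k_{|f^{-1}(\alpha),f^{-1}(\beta)|}$, where $f^{\pm1}$ acts on decorated numbers by $f(x^\pm)=f(x)^\pm$. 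It preserves decorations because it is an order-preserving homeomorphism. Since $F$ is an equivalence of abelian categories, it preserves epimorphisms and finite direct sums, so $F(\Gen(T))=\Gen(F(T))$.

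Next, I would collect the finite set $E=\{\|\alpha_j\|,\|\beta_j\|: 1\le j\le r\}\cap\bR$ of finite real endpoint values. I would take these points together with the finite real endpoints of $\bI$ and order them as $e_1<\cdots<e_p$. Then I choose integers $m<n$ and an order-preserving homeomorphism $g:\bR\to\bR$ such that:
\begin{itemize}
\item $g$ maps the endpoints of $\bI$ to $m$ and $n$, taking a bounded image when the relevant endpoint of $\bI$ is finite;
\item $g$ maps each interior point $e_i$ to an integer strictly between $m$ and $n$.
\end{itemize}
A piecewise linear $g$ with these properties exists. If $\bI$ is unbounded on one side, I use a homeomorphism such as $\arctan$-type rescaling first, so that $g(\bI)$ becomes a bounded interval of the same type as $\bI^*_{m,n}$. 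Endpoints in $\pm\infty$ then correspond to the open ends $m^+$ or $n^-$, which are integral by the convention for critical vectors. This is where the type $*$ is determined: it is the openness or closedness of the two ends of $\bI$, after sending infinite ends to open ends. I then set $f=g^{-1}$, so that $F(T)=\bigoplus_j k_{|g(\alpha_j),g(\beta_j)|}$.

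Finally, I check that each $|g(\alpha_j),g(\beta_j)|$ is an integral subinterval of $\bI^*_{m,n}$. Its endpoint real values lie in $g(E)\subseteq\bZ$, or equal the integers $m,n$ coming from infinite ends. Hence $\widehat T:=F(T)$ is an integral representation and $F(\fT)=\Gen(\widehat T)$ is integrally generated.

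The main obstacle I expect is bookkeeping rather than a genuine difficulty. The delicate case is when some $\alpha_j$ or $\beta_j$ is infinite, or when $\bI$ is unbounded, since then an endpoint becomes $m^+$ or $n^-$ after normalization. I also have to confirm the direction of the equivalence: the statement writes $f:\bI\to\bI^*_{m,n}$, whereas Proposition \ref{prop:homeo} is phrased with $f(\bI)=\mathbb J$. I would handle this by applying Proposition \ref{prop:homeo} with whichever of $g$ or $g^{-1}$ makes $F(k_I)=k_{g(I)}$, and by treating the infinite ends with the explicit boundary convention of Definition \ref{def:RAn}.
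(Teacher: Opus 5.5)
Your proposal is correct and is essentially the argument the paper has in mind when it says the result follows from Proposition \ref{prop:homeo_int}: you take the finitely many endpoint values of the generating interval modules as critical points and normalize so that they become integers, which gives $F(\fT)=\Gen(F(T))$ with $F(T)$ integral. One thing to tidy: for unbounded $\bI$ your map $g$ cannot lie in $\Homeo_+(\bR)$ and still have bounded image, so phrase it, as the statement does, as an order-preserving homeomorphism $\bI\to\bI^*_{m,n}$; the proof of Proposition \ref{prop:homeo} goes through verbatim for such a map.
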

 
 Recall from Theorem \ref{thm:MRCB_tilt} that for each  MRCB subcategory $\fT$, $\Gen(\fT)$ is a torsion class. We point out that it is in fact an integrally generated torsion class.  
\begin{lem}\label{lem:mrcb_is_int}
Let  $\fT$  be a MRCB subcategory of $\rep(\bI_{m,n}^*)$. Then $\widehat \fT$  has  an additive generator $\widehat T$ such that $\Gen(\fT)=\Gen(\widehat T)$.
\end{lem}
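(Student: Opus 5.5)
Since $\widehat\fT$ is a maximal rigid subcategory of $\bI_{m,n}^*\Int$ (Corollary \ref{cor:rigid_complete}), and $\bI_{m,n}^*\Int$ has only finitely many indecomposable objects, $\widehat\fT$ has finitely many indecomposables up to isomorphism. We take $\widehat T$ to be the direct sum of one representative of each; this is an additive generator. Since $\widehat T\in\fT$, we immediately get $\Gen(\widehat T)\subseteq\Gen(\fT)$. For the reverse inclusion it suffices to show that every indecomposable $k_I\in\fT$ lies in $\Gen(\widehat T)$, because $\Gen(\widehat T)$ is closed under quotients and finite direct sums. Integral intervals are handled trivially, so only the non-integral intervals of $\fT$ need work.

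By Theorem \ref{thm:MCIC} (the MCCB case), the non-integral intervals in $\cS(\fT)$ are exactly $|\sigma_i,x|$ and $|\sigma_i,x^-|$ for $x\in(i,i+1)$, $m\le i<n$, where $|\sigma_i,\tau_i|$ is the roof of $(i,i+1)$. Fix such an $i$ and $x$. By Lemma \ref{lem:inf_int_dual}, applied to the interval $|\sigma_i,x|\in\cS(\fT)$ whose left endpoint has integer real value, the interval $|\sigma_i,\psi(\sigma_i)|$ is integral and lies in $\fT$, hence in $\widehat\fT$. Here $\psi(\sigma_i)=\sup\{\gamma:|\sigma_i,\gamma|\in\cS(\fT)\}\succeq x$.

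Because $|\sigma_i,x|$ and $|\sigma_i,x^-|$ share the left endpoint $\sigma_i$ with $|\sigma_i,\psi(\sigma_i)|$ and have smaller right endpoints, there are epimorphisms
\[
k_{|\sigma_i,\psi(\sigma_i)|}\to k_{|\sigma_i,x|},\qquad k_{|\sigma_i,\psi(\sigma_i)|}\to k_{|\sigma_i,x^-|}.
\]
These are the quotient maps cutting off the top part of the interval. Therefore both modules lie in $\Gen(\widehat T)$, which gives $\Gen(\fT)\subseteq\Gen(\widehat T)$ and hence equality.

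The only delicate point is making sure the integral interval dominating $|\sigma_i,x|$ really exists and has the same left endpoint. This is exactly what Lemma \ref{lem:inf_int_dual} guarantees: the supremum is attained and the resulting interval is integral. Alternatively, one could use the roof $|\sigma_i,\tau_i|$ itself, which has the same left endpoint and contains $(i,i+1)$, so $\tau_i\succeq(i+1)^-\succ x$. This gives the epimorphism directly and avoids the lemma. I would present this roof argument as the main one, since it is cleaner.
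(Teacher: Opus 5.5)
Your proof is correct, and the roof argument you choose as the main one is essentially the paper's proof. Both take $\widehat T$ to be the direct sum of the finitely many integral indecomposables in $\fT$, use Theorem~\ref{thm:MCIC} to see that every non-integral indecomposable of $\fT$ is $k_{|\sigma_i,x|}$ or $k_{|\sigma_i,x^-|}$, and cover it by the epimorphism from the roof $k_{|\sigma_i,\tau_i|}\in\widehat\fT$; your alternative via Lemma~\ref{lem:inf_int_dual} also works but is not needed.
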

\begin{proof}
 Clearly $\widehat\cM:=\{I\subseteq \bI_{m,n}^*: k_I\in \widehat\fT\}\subseteq \widehat\Intv(\bI_{m,n}^*)$ is a finite set.  The subcategory $\widehat \fT$  has  an additive generator $\widehat T=\bigoplus\limits_{I\in\widehat\cM}k_I$.

Assume $k_{|\alpha,x|}\in\fT$ where $i<x<i+1$. By Theorem \ref{thm:MCIC}, there is a roof $(\sigma_i,\tau_i)$ of the open interval $(i,i+1)$ and $\alpha=\sigma_i$. Hence $k_{|\sigma_i,\tau_i|}\to k_{|\alpha,x|}$ is an epimorphism, where $k_{|\sigma_i,\tau_i|}\in\widehat\fT$ is isomorphic to a direct summand of $\widehat T$. Therefore $\fT$ is generated by $\widehat T$. Consequently,  $\Gen(\fT)=\Gen(\widehat T)$.
\end{proof}

 The classical Ingalls-Thomas bijection shows that the map $T\mapsto \Gen(T)$ is a bijection between support tilting modules and finitely generated torsion classes over path algebras. The theorem below shows that the same map gives rise to a bijection between integrally supported MRCB subcategories and integrally generated torsion classes, which can be considered as a continuous version of Ingalls-Thomas bijection.
  
%
\begin{thm}\label{thm:IT_contin}
The map $\fT\mapsto \Gen(\fT)$ yields a bijection between the set of integrally supported MRCB subcategory of $\rep(\bI_{m,n}^*)$ and the set of  integrally generated torsion class of $\rep(\bI_{m,n}^*)$.
\end{thm}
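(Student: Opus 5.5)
The plan is to factor the map $\fT\mapsto\Gen(\fT)$ through the classical Ingalls--Thomas bijection (Theorem \ref{thm:IT}), using Theorem \ref{thm:supp_MRCB} and the equivalence $\fD^*_{m,n}$ of Lemma \ref{lem:eqv_A}.

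\textbf{Step 1: well-definedness.} Let $\fT$ be integrally supported MRCB with components $\fT_i=\fT|_{\bI^*_{m_i,n_i}}$. By Lemma \ref{lem:mrcb_is_int}, applied componentwise, $\Gen(\fT_i)=\Gen(\widehat T_i)$ with $\widehat T_i$ integral. Hence $\Gen(\fT)=\Gen(\widehat T)$ for $\widehat T=\bigoplus_i\widehat T_i$, and $\widehat T$ is an integral representation of $\rep(\bI^*_{m,n})$ because the supports are integral intervals. It remains to see that $\Gen(\widehat T)$ is a torsion class of $\rep(\bI_{m,n}^*)$. It is closed under quotients. For extensions, note that $\widehat T$ corresponds under $\fD^*_{m,n}$ to a support tilting module $T$ (Theorem \ref{thm:supp_MRCB}). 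For such $T$, $\Gen(T)$ is a torsion class in $kQ^*_{m,n}\modu$.

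\textbf{Step 2: transfer lemma.} We need that for an integral $\widehat T$, $\Gen(\widehat T)$ in $\rep(\bI^*_{m,n})$ is extension closed whenever $\Gen(T)$ is in the discrete category. I would prove this directly on interval modules. A module $k_{|\alpha,\beta|}$ lies in $\Gen(\widehat T)$ iff $\alpha$ is integral-type (in $\{m,m^+,\dots,n\}$) and there is $k_{|\alpha,\delta|}$ summand of $\widehat T$ with $\delta\succeq\beta$. Then $k_{|\alpha,\beta|}\in\Gen(\widehat T)$ iff $k_{|\alpha,\beta'|}\in\Gen(\widehat T)$, where $\beta'$ is the smallest integral-type endpoint $\succeq\beta$. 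Thus $\Gen(\widehat T)$ is determined by integral data. Extension closure is then checked on the nonsplit sequences of Lemma \ref{lem:key}(1): their middle terms are $k_{|\alpha,\delta|}\oplus k_{|\gamma,\beta|}$, and these can be rounded to integral ones. The torsion-class property then follows from the discrete case via $\fD^*_{m,n}$. This rounding compatibility is the \emph{main obstacle}: one must verify that rounding right endpoints up commutes with the two types of extensions. I would treat it by cases on whether $\beta$ is integral-type.

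\textbf{Step 3: bijectivity.} Consider the composite
\[
\fT\mapsto\Psi(\fT)=[T]\mapsto\Gen(T)\mapsto\fD^*_{m,n}\text{-induced }\Gen(\widehat T).
\]
By the rounding description in Step 2, the map $\Gen(T)\mapsto\Gen(\fD^*_{m,n}T)$ from torsion classes of $kQ^*_{m,n}\modu$ to integrally generated torsion classes of $\rep(\bI^*_{m,n})$ is a bijection. It is injective because $\Gen(\widehat T)\cap\bI^*_{m,n}\Int=\fD^*_{m,n}(\Gen T)$. It is surjective because any integrally generated torsion class $\Gen(\widehat T')$ has $\Gen(\widehat T')\cap \bI^*_{m,n}\Int$ equal to a finitely generated, hence functorially finite, torsion class in the discrete category. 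By Theorem \ref{thm:IT} that class is $\Gen(T)$ for a support tilting $T$, and the rounding description gives $\Gen(\widehat T')=\Gen(\fD^*_{m,n}T)$.

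Combining this with Theorem \ref{thm:supp_MRCB} and Theorem \ref{thm:IT}, the map $\fT\mapsto\Gen(\fT)$ is a composite of bijections. It agrees with the given map because $\Gen(\fT)=\Gen(\widehat T)$ and $\widehat\fT=\add\fD^*_{m,n}(T)$.
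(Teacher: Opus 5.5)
Your proof is correct, but it establishes the central fact, that $\Gen(\fT)$ is a torsion class of $\rep(\bI^*_{m,n})$, by a different route from the paper.

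\textbf{The paper's route.} It gets the torsion-class property from tilting theory. Each support component $\fT_i$ is MRCB, hence tilting by Theorem~\ref{thm:MRCB_tilt}(2), so $\Gen(\fT_i)$ is a torsion class by Lemma~\ref{lem:weak_tilt_tor}. The components are then glued, and integral generation comes from Lemma~\ref{lem:mrcb_is_int}. For the converse, the paper intersects $\cT$ with $\bI^*_{m,n}\Int$, transports to $kQ^*_{m,n}\modu$, applies Theorems~\ref{thm:IT} and~\ref{thm:supp_MRCB}, and checks $\cT=\Gen(\fT)$. Injectivity is only implicit there.

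\textbf{Your route.} You bypass Section~5 entirely with a transfer lemma: for integral $\widehat T$, $\Gen(\widehat T)$ is a torsion class in $\rep(\bI^*_{m,n})$ whenever $\Gen(T)$ is one in $kQ^*_{m,n}\modu$. Together with $\Gen(\widehat T)\cap\bI^*_{m,n}\Int=\fD^*_{m,n}(\Gen T)$, this gives a bijection between torsion classes of $kQ^*_{m,n}\modu$ and integrally generated torsion classes. The theorem then becomes a composite of three bijections. This buys three things:
\begin{enumerate}
\item the argument is self-contained;
\item disconnected supports are handled uniformly, since you work with $\bigoplus_i\widehat T_i$ in the ambient category and need no gluing;
\item injectivity is explicit.
\end{enumerate}
The paper's route is shorter because the endpoint combinatorics is already absorbed into the contravariant-finiteness proof of Theorem~\ref{thm:MRCB_tilt}.

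\textbf{Closing the step you flagged.} The step you called the main obstacle does go through.
\begin{enumerate}
\item Reduction to indecomposable end terms. Since $\Gen(\widehat T)$ is closed under quotients, extension closure can be tested on indecomposable end terms. If $\pi\colon B\to C=\bigoplus_l C_l$, then $B$ is a quotient of $\bigoplus_l\pi^{-1}(C_l)$. For $A=A_1\oplus A'$, $B$ is an extension of $B/A_1$ by $A_1$, and $B/A_1$ is an extension of $C$ by $A'$; now induct on the number of summands of $A$.
\item Setup. Let $k_{|\alpha,\beta|},k_{|\gamma,\delta|}\in\Gen(\widehat T)$ with $\alpha\prec\gamma\preceq\suc(\beta)\preceq\delta$. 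Any summand $k_{|\gamma,\beta|}$ of the middle term is a quotient of $k_{|\gamma,\delta|}$, so only $k_{|\alpha,\delta|}$ must be checked.
\item Rounding. Let $\beta'\preceq\delta'$ be the smallest integral-type right endpoints with $\beta'\succeq\beta$ and $\delta'\succeq\delta$.
\item Case $\beta'=\delta'$. Then $k_{|\alpha,\delta|}$ is already a quotient of a summand $k_{|\alpha,\varepsilon|}$ of $\widehat T$ with $\varepsilon\succeq\beta'$.
\item Case $\beta'\prec\delta'$. Then $\alpha\prec\gamma\preceq\suc(\beta')\preceq\delta'$. So $k_{|\alpha,\beta'|}$ and $k_{|\gamma,\delta'|}$, both in $\Gen(\widehat T)$ by your rounding description, have a nonsplit extension inside $\bI^*_{m,n}\Int$. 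Its middle term contains $k_{|\alpha,\delta'|}$, which the discrete torsion property places in $\Gen(\widehat T)$, and $k_{|\alpha,\delta|}$ is a quotient of it.
\end{enumerate}
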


\begin{proof}
    Let $\fT$ be an integrally supported MRCB subcategory of $\rep(\bI_{m,n}^*)$. Assume $\Supp(\fT)=\bigcup\limits_{i=1}^s \bI_{m_i,n_i}^*$. By definition,  $\fT_i=\fT|_{\bI_{m_i,n_i}^*}$ is a MRCB subcategory of $\rep(\bI_{m_i,n_i}^*)$. On one hand, by Theorem \ref{thm:MRCB_tilt}, $\Gen(\fT_i)$ is a torsion class. Hence $\Gen(\fT)=\add(\bigcup\limits_{i=1}^s \Gen(\fT_i))$ is a torsion class. On the other hand, since $\fT_i$ is consistent below, due to Lemma \ref{lem:mrcb_is_int},  $\Gen(\fT_i)=\Gen(\widehat T_i)$ for some additive generator $\widehat{T_i}$ of $\widehat \fT_i$. Therefore, $\Gen(\fT)=\Gen(\bigoplus\limits_{i=1}^s\widehat{T_i})$ is  integrally generated.
    
 Conversely, let $\cT$ be an integrally generated  torsion class. Then $\widehat\cT=\cT\cap\bI_{m,n}^*\Int$ is a torsion class in $\bI_{m,n}^*\Int$ and $\cT=\Gen(\widehat\cT)$. Using the equivalence Lemma \ref{lem:eqv_A},  $\widehat\cT=\fD_{m,n}^*(\cT')$ for some torsion class $\cT'$ of $kQ_{m,n}^*\modu$. Theorem \ref{thm:IT} implies that $\cT'=\Gen(T)$ for some support tilting $kQ_{m,n}^*$-module $T$.   By Theorem \ref{thm:supp_MRCB}, it corresponds to a unique integrally supported MRCB subcategory $\fT$ satisfying $\widehat\fT=\add\fD_{m,n}^*(T)$. Finally, we need to check $\cT=\Gen(\fT)$.  In fact,
 $$\cT=\Gen(\widehat\cT)=\Gen(\fD_{m,n}^*(\cT'))=\Gen(\fD_{m,n}^*(T))=\Gen(\widehat\fT)=\Gen(\fT)$$ and therefore the map $\fT\mapsto \Gen(\fT)$ is a bijection.
\end{proof}

{\bf Acknowledgments:} We would like to thank  Xiu-Hua Luo and Jing Ma for helpful discussions and comments.

\end{document}